\newcommand{\D}{{\partial}}
\newcommand \ow {\overline{w}}
\newcommand \ou {\overline{u_1}}
\newcommand{\RR}{{\mathbb R}}
\newcommand{\defeq}{\stackrel{\rm{def}}{=}}
\newcommand{\NN}{{\mathbb N}}
\renewcommand{\l}{\lambda}
\newcommand{\eps}{\varepsilon}
\newcommand{\supp}{\operatorname{supp}}
\newcommand{\tr}{\operatorname{tr}}
\renewcommand{\Re}{\mathop{\rm Re}\nolimits}
\theoremstyle{plain}
\newtheorem{thm}{Theorem}
\newtheorem{prop}{Proposition}[section]
\newtheorem{lem}[prop]{Lemma}
\theoremstyle{definition}
\newtheorem{rem}{Remark}[section]
\numberwithin{equation}{section}
\def\squarebox#1{\hbox to #1{\hfill\vbox to #1{\vfill}}} 
\newcommand{\sech}{\textnormal{sech}}
\newcommand{\indentalign}{\hspace{0.3in}&\hspace{-0.3in}}
\newcommand{\la}{\langle}
\newcommand{\ra}{\rangle}
\newcommand{\nlso}{\textnormal{NLS}_0}
\newcommand{\nlsq}{\textnormal{NLS}_q}
\newcommand{\trans}{\textnormal{tr}}
\newcommand{\refl}{\textnormal{ref}}
\newcommand{\bd}{\textnormal{bd}}
\title
[Fast soliton scattering by attractive delta impurities]
{Fast soliton scattering by attractive delta impurities}
\author[Kiril Datchev]
{Kiril Datchev}
\email{datchev@math.berkeley.edu}
\author[Justin Holmer]
{Justin Holmer}
\email{holmer@math.berkeley.edu}
\address{Mathematics Department, University of California \\
Evans Hall, Berkeley, CA 94720, USA}
\begin{document}    

\begin{abstract}
We study  the Gross-Pitaevskii equation with an attractive delta function
potential and show that in the high velocity limit an incident soliton is
split into reflected and transmitted soliton components plus a small
amount of dispersion.  We give explicit analytic formulas for the reflected
and transmitted portions, while the remainder takes the form of an error.
Although the existence of a bound state for this potential introduces
difficulties not present in the case of a repulsive potential, we
show that the proportion of the soliton which is trapped at the origin vanishes in the limit.
\end{abstract}
   
\maketitle

\section{Introduction}   
\label{in}

The nonlinear Schr\"odinger equation (NLS) or Gross-Pitaevskii equation (GP)
\begin{equation}
\label{E:fNLS}
i\partial_t u + \tfrac12\partial_x^2 u + |u|^2u=0 \, ,
\end{equation}
where $u=u(x,t)$ and $x\in \mathbb{R}$, possesses a family of soliton solutions
$$u(x,t) = e^{i\gamma} e^{ivx}e^{-\frac12itv^2}\lambda \sech(\lambda(x-x_0-vt))$$
parameterized by the constants of phase $\gamma\in \mathbb{R}$, velocity $v\in \mathbb{R}$, initial position $x_0\in \mathbb{R}$, and scale $\lambda>0$.  Given that these solutions are exponentially localized, they very nearly solve the perturbed equation
\begin{equation}
\label{E:pNLS}
i\partial_t u + \tfrac12\partial_x^2 u - q\delta_0(x) + |u|^2u=0\, ,
\end{equation}
when the center of the soliton $|x_0+vt| \gg 1$.  In fact, if we consider initial data
$$u(x,0) = e^{ixv}\sech (x-x_0)$$
for $x_0 \ll -1$, then we expect the solution to essentially remain the rightward propagating soliton $e^{ixv}e^{-\frac12 iv^2t} \sech(x-x_0-vt)$ until time $t\sim |x_0|/v$ at which point a substantial amount of mass ``sees'' the delta potential.  It is of interest to examine the subsequent behavior of the solution, as this arises as a model problem in nonlinear optics and condensed matter physics (see Cao-Malomed \cite{CM} and Goodman-Holmes-Weinstein \cite{GHW}). In the case  $|q| \ll 1$, Holmer-Zworski \cite{HZ1} find that the soliton remains intact and the evolution of the center of the soliton approximately obeys Hamilton's equations of motion for a suitable effective Hamiltonian.    This result applies to both the repulsive ($q>0$) case and the attractive ($q<0$) case, and identifies the $|q|\ll 1$ setting as a \emph{semi-classical} regime.


On the other hand, \emph{quantum} effects dominate for high velocities $|v|\gg 1$.  In Holmer-Marzuola-Zworski \cite{HMZ}\cite{HMZ2}, the case of $q>0$ and $v \gg 1$ is studied (most interesting is the regime $q\sim v$), and it is proved that the incoming soliton is split into a transmitted component and a reflected component.   The transmitted component continues to propagate to the right at velocity $v$ and the reflected component propagates back to the left at velocity $-v$, see Fig. \ref{F:snapshots}.  The transmitted mass and reflected mass are determined as well as the detailed asymptotic form of the transmitted and reflected waves.  The rigorous analysis in \cite{HMZ} is rooted in the heuristic that at high velocities, the time of interaction of the solution with the delta potential is short, and thus the solution is well-approximated in $L^2$ by the solution to the corresponding linear problem
\begin{equation}
\label{E:plin}
i\partial_t u + \tfrac12\partial_x^2 u -q\delta_0(x) u =0 \,.
\end{equation}
This heuristic is typically valid provided the problem is $L^2$ subcritical with respect to scaling.  In this case, it is shown to hold using Strichartz estimates for solutions to this linear problem and its inhomogeneous counterpart, with bounds independent of $q$.  The Strichartz estimates are also used in a perturbative analysis comparing the incoming solution (pre-interaction) and outgoing solution (post-interaction) with the solution to the free NLS equation \eqref{E:fNLS}.  One then proceeds with an analysis of the linear problem to understand the interaction.  Let $H_q = -\frac12\partial_x^2 + q\delta_0(x)$ and consider a general plane wave solution to $(H_q-\frac12\lambda^2)w=0$,
$$w(x) = 
\left\{
\begin{aligned}
&A_+ e^{-i\lambda x} + B_- e^{i\lambda x} & \text{for } x>0\\
&A_- e^{-i\lambda x} + B_+ e^{i\lambda x} & \text{for } x<0
\end{aligned}
\right.
$$
The matrix
$$S(\lambda): \begin{bmatrix} A_+ \\ B_+ \end{bmatrix} \mapsto \begin{bmatrix} A_- \\ B_- \end{bmatrix}$$
sending incoming ($+$) coefficients to outgoing ($-$) coefficients is called the scattering matrix, and in this case it can be easily computed as
$$S(\lambda) = \begin{bmatrix} t_q(\lambda) & r_q(\lambda) \\ r_q(\lambda) & t_q(\lambda) \end{bmatrix} \, ,$$
where $t_q(\lambda)$ and $r_q(\lambda)$ are the transmission and reflection coefficients
$$t_q(\lambda) = \frac{i\lambda}{i\lambda-q} \quad \text{and} \quad r_q(\lambda) = \frac{q}{i\lambda -q}\, .$$
We have that at high velocities and for $x_1\ll -1$,
\begin{equation}
\label{E:linscat}
e^{-itH_q}[e^{ixv}\sech(x-x_1)] \approx t(v)e^{-itH_0}[e^{ixv}\sech(x-x_1)] + r(v) e^{-itH_0}[e^{-ixv}\sech(x+x_1)].
\end{equation}
From this we can infer that the transmitted mass 
$$T_q(v) = \frac{\|u(t)\|_{L_{x>0}^2}^2}{\|u(t)\|_{L_x^2}^2} = \tfrac12\|u(t)\|_{L_{x>0}^2}^2$$
matches the quantum transmission rate at velocity $v$, i.e. the square of the transmission coefficient
$$T_q(v) \approx |t_q(v)|^2 = \frac{v^2}{v^2+q^2}$$
This is confirmed by a numerical analysis of this problem in Holmer-Marzuola-Zworski \cite{HMZ2}, where it is reported that for $q/v$ fixed,
$$T_q(v) = \frac{v^2}{v^2+q^2} + \mathcal{O}(v^{-2}), \quad \text{as }v\to +\infty \, .$$
Further, \eqref{E:linscat} gives approximately the form of the solution just after the interaction, and one can then model the post-interaction evolution by the free nonlinear equation \eqref{E:fNLS} and apply the inverse scattering method to yield a detailed asymptotic.  The results of \cite{HMZ} are valid up to time $\log v$, at which point the errors accumulated in the perturbative analysis become large.

\begin{figure}
\begin{center}
\includegraphics[scale=0.42]{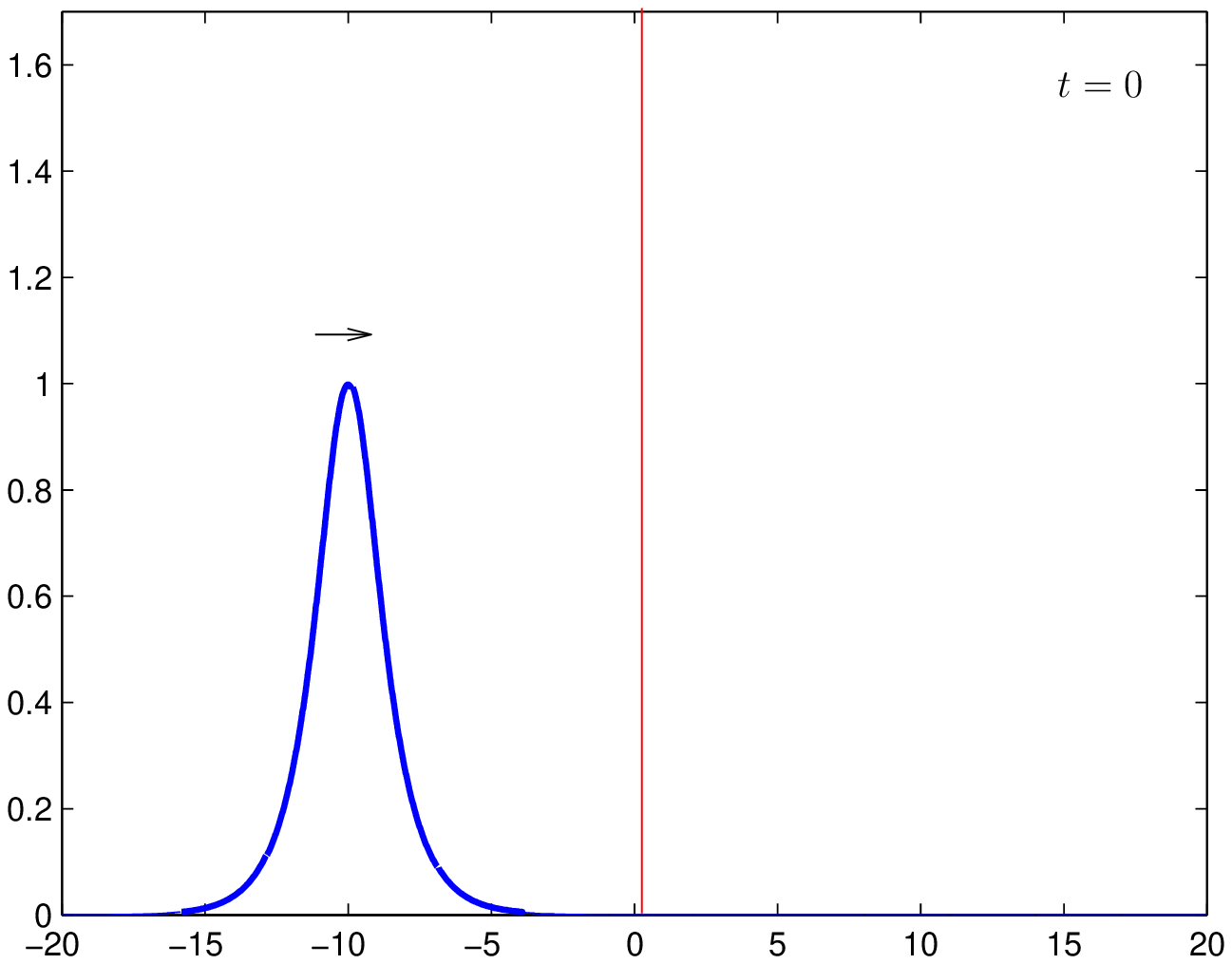}\hfill
\includegraphics[scale=0.42]{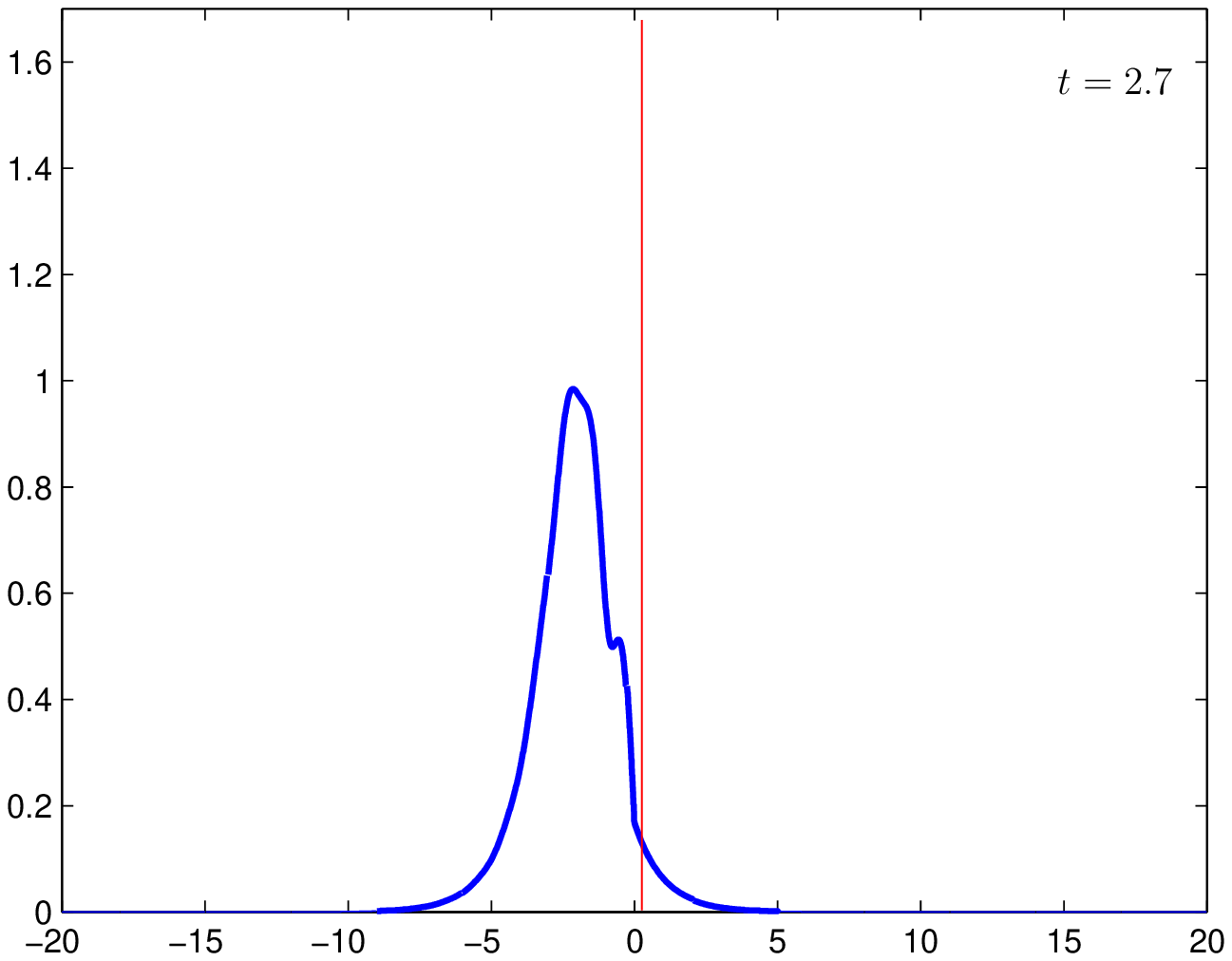}
\includegraphics[scale=0.42]{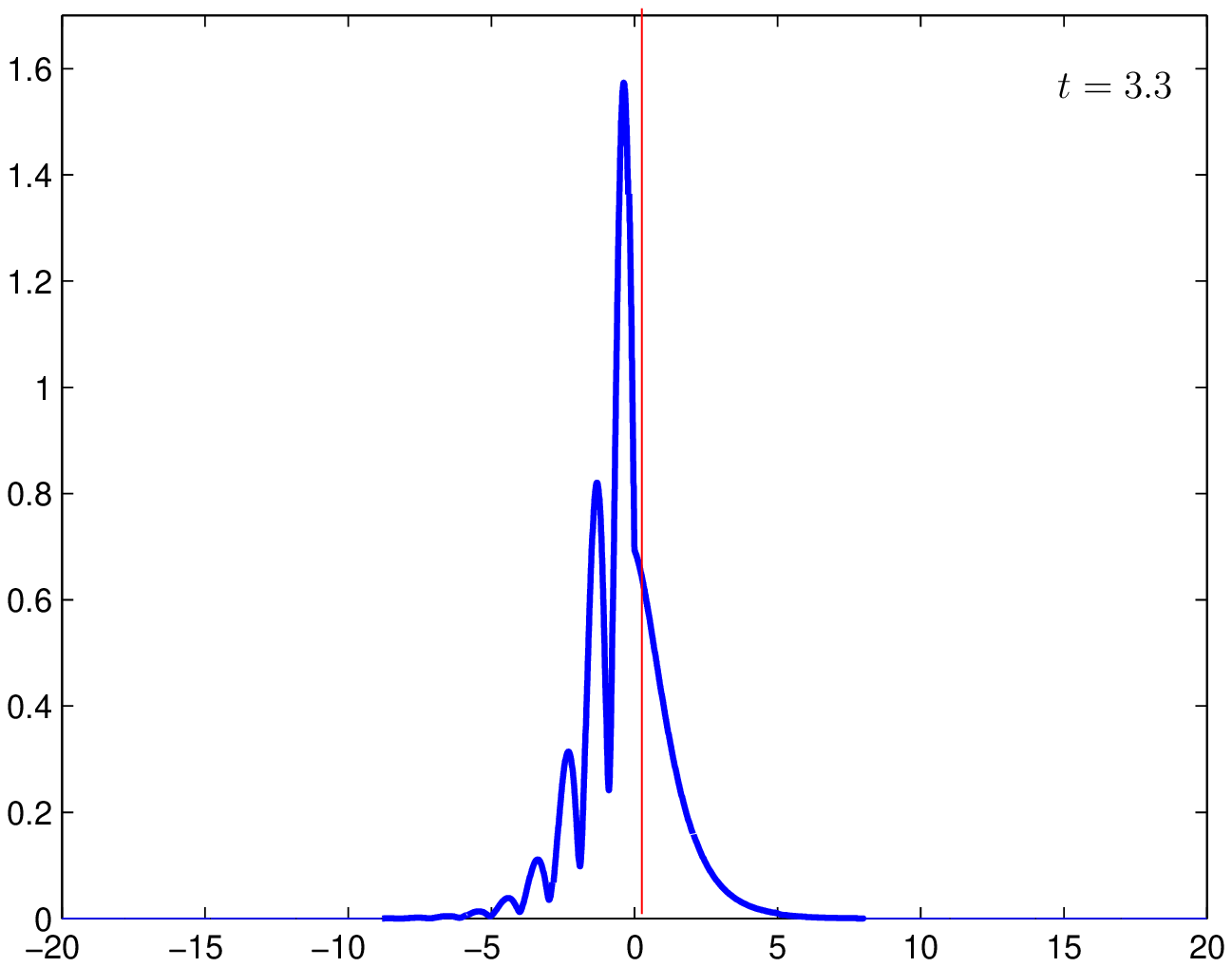}\hfill
\includegraphics[scale=0.42]{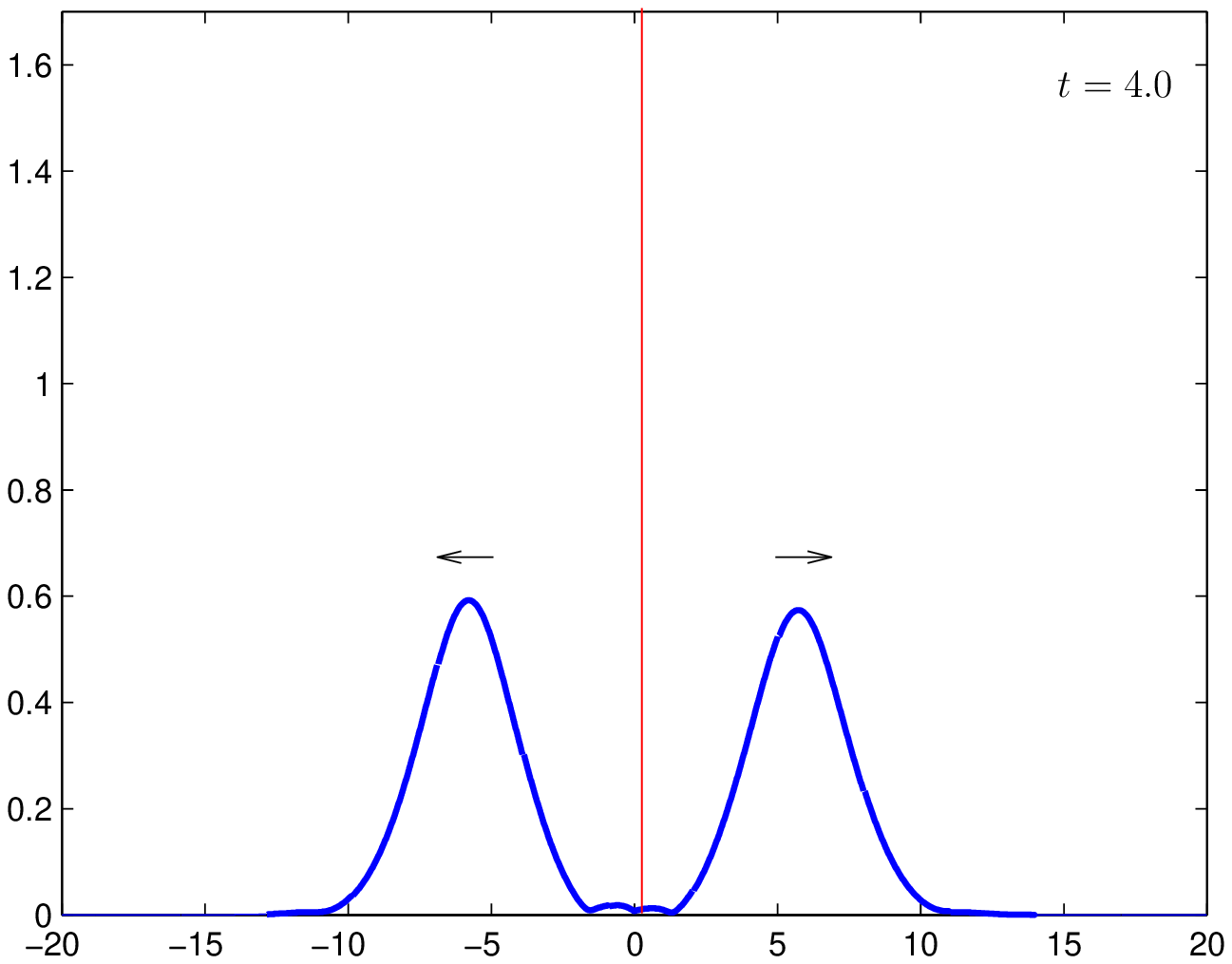}
\end{center}
\caption{\label{F:snapshots} Numerical simulation of the case $q=-3$, $v=3$, $x_0=-10$, at times $t=0.0, 2.7, 3.3, 4.0$.  Each frame is a plot of amplitude $|u|$ versus $x$.}
\end{figure}

When $q<0$, the nonlinear equation \eqref{E:pNLS} has a one-parameter family of bound state solutions 
\begin{equation}
\label{E:bound}
u(x,t) = e^{it\lambda^2/2}\lambda \sech(\lambda|x|+\tanh^{-1}(|q|/\lambda)), \quad 0<|q|<\lambda
\end{equation}
The numerical simulations in \cite{HMZ2} show that at high velocities, the incoming soliton is still split into a rightward propagating transmitted component and a leftward propagating reflected component, although in addition some mass is left behind at the origin ultimately resolving to a bound state of the form \eqref{E:bound}.  However, the amount of mass trapped at the origin diminishes exponentially as $v\to +\infty$ and the observed mass of the transmitted and reflected waves is consistent with the assumption that the outgoing solution is still initially well-modelled by \eqref{E:linscat}.

In this paper, we undertake a rigorous analysis of the $q<0$ and $v$ large case.  This analysis is complicated by the presence an eigenstate solution $u(x,t) = e^{\frac12 itq^2}e^{-|q||x|}$ to the linear problem \eqref{E:plin}.  Therefore, the Strichartz estimates, which involve global time integration, cannot be valid for general solutions to \eqref{E:plin}.  However, they can be shown to hold for the \emph{dispersive} component of the solution $e^{-itH_q}(\phi - P\phi)$, where $P$ is the orthogonal projection onto the eigendirection $e^{-|q||x|}$.   In the pre-interaction, interaction, and post-interaction perturbative analyses, this eigenstate must be separately analyzed.  This introduces the most difficulty in the post-interaction analysis, although (as explained in more detail below), we are able to obtain suitable estimates by introducing a more refined decomposition of the outgoing waves and invoking some nonlinear energy estimates.  We thus obtain the following:

\begin{thm}
\label{th:1}
Fix  $0 < \eps \ll 1$.  If $u(x,t)$ is the solution of \eqref{E:pNLS} with initial condition $u(x,0)=e^{ixv}\sech(x-x_0)$ and $x_0\leq -v^\eps$, then for $|q|\gtrsim 1$ and 
\begin{equation}
\label{E:vlarge}
v \ge C(\log |q|)^{1/\eps} + C|q|^{\frac{13}{14}(1+2\eps)} + C_{\eps,n} \la q \ra^{\frac 1n}
\end{equation}
we have
\begin{equation}
\label{eq:th}
\frac{1}{2} \int_{x> 0 } | u ( x , t ) |^2 dx = \frac{v^2}{ v^2 + q^2 } + 
{\mathcal O}( |q|^\frac13v^{-\frac76(1-2\eps)})+\mathcal{O}(v^{-(1-2\eps)})
\end{equation} 
uniformly for ``post-interaction'' times 
$$\frac{|x_0|}{v}+v^{-1+\eps}\leq t \leq \eps\log v. $$  
Here the constant $C$ and the constants in $\mathcal{O}$ are independent of $q,v,\eps$, while $C_{\eps,n}$ is a constant depending on $\eps$ and $n$ which goes to infinity as $\eps \to 0$ or $n \to \infty$.
\end{thm}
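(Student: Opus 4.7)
The plan is to split the time interval $[0,\eps\log v]$ into three successive phases: a \emph{pre-interaction} phase $0\le t\le t_1:=|x_0|/v-v^{-1+\eps}$, in which the soliton is still exponentially far from the delta, an \emph{interaction} phase $t_1\le t\le t_2:=|x_0|/v+v^{-1+\eps}$ of length $2v^{-1+\eps}$, in which the soliton passes through the origin, and a \emph{post-interaction} phase $t_2\le t\le \eps\log v$, in which two outgoing solitons travel away from the origin and a small portion of mass may be trapped by the attractive potential. In each phase I would compare the solution of \eqref{E:pNLS} to an appropriate model equation and estimate the error via Duhamel's formula and Strichartz estimates. Because the linear operator $H_q$ with $q<0$ has a bound state $e^{-|q||x|}$, the Strichartz estimates must be applied to $e^{-itH_q}(I-P)$, where $P$ is the rank-one projector onto the eigendirection, so that the component along $e^{-|q||x|}$ has to be tracked separately in each phase.

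In the pre-interaction phase, the soliton solves the free NLS \eqref{E:fNLS} up to an exponentially small error in $|x_0+vt|$, since the delta contributes only through the boundary value at zero, which is $O(e^{-|x_0+vt|})$. At $t_1$ the solution is $L^2$-close to the free-NLS soliton shifted to $x\approx -v^\eps$. During the interaction phase, following \cite{HMZ}, I would drop the nonlinearity, using uniform-in-$q$ Strichartz estimates for $e^{-itH_q}(I-P)$ on the short interval of length $2v^{-1+\eps}$ to show that the evolution is approximated in $L^2$ by $e^{-i(t-t_1)H_q}$ applied to the incoming soliton. The eigendirection produces only a harmless $O(e^{-v^\eps})$ contribution because the initial soliton has exponentially small inner product with $e^{-|q||x|}$. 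Then \eqref{E:linscat} gives the explicit scattering decomposition at time $t_2$ into a transmitted piece of amplitude $t_q(v)$ supported near $x\approx v^\eps$ and a reflected piece of amplitude $r_q(v)$ supported near $x\approx -v^\eps$.

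The main obstacle is the post-interaction phase, where over a time as long as $\eps\log v$ the nonlinear interaction with the delta can, in principle, pump the eigenstate amplitude and spoil the linear picture. To control this I would introduce a refined decomposition $u(t)=u_T(t)+u_R(t)+\tilde u(t)$, where $u_T, u_R$ solve free NLS with initial data given by the transmitted and reflected profiles at $t_2$, and $\tilde u$ is a remainder. Since $u_T$ and $u_R$ are solitons separating at relative speed $2v$, their overlap at the origin (where the delta lives) decays exponentially in $v(t-t_2)$; this, combined with nonlinear energy estimates and the Strichartz bounds on $(I-P)e^{-itH_q}$, lets me close a Gronwall-type argument for $\tilde u$ on $[t_2,\eps\log v]$. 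The projection $P u(t)$ onto the bound state is shown to remain $o(1)$ as $v\to\infty$ by the same overlap estimate; hence the trapped mass is a lower-order error, which is the heart of the theorem.

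Finally, the transmitted mass is computed by integrating $|u_T(t)|^2$ over $x>0$: since $u_T$ is at time $t_2$ the profile $t_q(v)e^{ixv}\sech(x-x_T)$ with $x_T\sim v^\eps$, and free NLS preserves $L^2$, we obtain $\tfrac12\int_{x>0}|u_T|^2=|t_q(v)|^2=v^2/(v^2+q^2)$ up to an exponentially small tail from the sech profile. The residual errors from the three Duhamel comparisons then combine to produce the two terms $\mathcal{O}(|q|^{1/3}v^{-7/6(1-2\eps)})$ (dominated by the post-interaction nonlinear estimate, which involves $|q|^{1/3}$ through the Strichartz constant for $H_q$) and $\mathcal{O}(v^{-(1-2\eps)})$ (the interaction-phase error $\sim v^{-1+\eps}\cdot v^\eps$). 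The lower bound \eqref{E:vlarge} on $v$ is exactly what is needed to make the Gronwall constant in the post-interaction phase bounded over the interval of length $\eps\log v$ and to ensure the refined decomposition remains valid.
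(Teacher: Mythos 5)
Your three-phase architecture is the same as the paper's, but there are two genuine gaps, both of which the paper flags explicitly as the main new difficulties in the $q<0$ case.

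\emph{Phase 2.} You propose to ``drop the nonlinearity, following \cite{HMZ}'' and use uniform-in-$q$ Strichartz estimates for $e^{-itH_q}(I-P)$ over the short interaction interval. For $q<0$ this only yields an $L^2$ error of size roughly $|q|^{1/3}v^{-2/3+\eps}+v^{-1/2+\eps}$, which in the interesting regime $|q|\sim v$ is $v^{-1/3+\eps}$. That is \emph{not} small enough to initialize the Phase 3 iteration: the Phase 3 argument needs to iterate over $\sim \eps\log v$ unit intervals and the constraint that emerges is $|q|^{3/2}(\text{error})^2 \lesssim 1$, which would force $v\gtrsim |q|^{11/6}$ and exclude $|q|\sim v$. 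The paper's fix is to add a cubic correction term $\int_{t_1}^t e^{-i(t-s)H_q}|e^{-isH_q}\phi|^2 e^{-isH_q}\phi\,ds$ to the linear approximation (Lemmas \ref{linapp}--\ref{L:dropint}), prove the refined error is $\sim|q|^{1/3}v^{-7/6+\eps}+v^{-1+\eps}$, and then show by a direct (non-Strichartz) computation that the cubic correction itself is $O(v^{-1+\eps})$. Without this ``add and subtract'' refinement you cannot obtain the $v^{-7/6}$ exponent in the theorem, nor the $|q|^{13/14(1+2\eps)}$ threshold.

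\emph{Phase 3.} You write $u=u_T+u_R+\tilde u$ and propose a Gronwall argument for $\tilde u$ using Strichartz bounds on $(I-P)e^{-itH_q}$. The difficulty is that $\tilde u(t_a)$ at the start of each unit interval generically has a nonzero component along the eigenstate $|q|^{1/2}e^{-|q||x|}$; the perturbed linear flow of that component is bounded in $L^\infty L^2$ but costs a factor $|q|^{1/3}$ in, e.g., $L^6_t L^6_x$, which is unavoidable on the right side of the Strichartz inequality. So each iterate would multiply the error by $|q|^{1/3}$ and you could not do more than two steps. The paper's remedy is structural: at each step define $u_{\bd}$ to be the \emph{perturbed nonlinear} flow $\nlsq$ of $P(u(t_a)-u_T(t_a)-u_R(t_a))$, write $u=u_T+u_R+u_{\bd}+w$, and control $u_{\bd}$ only in $L^\infty L^2$ and $L^\infty \dot H^1$ via mass and energy conservation (Lemma \ref{L:energycon}). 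Then $w(t_a)$ is orthogonal to the eigenstate and Strichartz applies cleanly. Your proposal mentions ``nonlinear energy estimates'' in passing, but without the explicit $u_{\bd}$ decomposition the argument does not close. Also, your attribution of the $|q|^{1/3}$ factor to ``the Strichartz constant for $H_q$'' is inaccurate: Proposition \ref{p:Str} gives a $q$-independent constant for $U_q$; the $|q|^{1/3}$ comes entirely from the $P$-projection terms, i.e.\ from the eigenstate.

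In short, the skeleton is right, but the two technical devices that actually make the $q<0$ case work --- the cubic correction in Phase 2 and the $u_{\bd}$ splitting in Phase 3 --- are absent, and the plan as written would fail in the regime $|q|\sim v$.
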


The proof is outlined in \S\ref{proof}.  It is decomposed into estimates for the pre-interaction phase (Phase 1), interaction phase (Phase 2), and post-interaction phase (Phase 3).  The details of the estimates for each of the phases are then given in \S\ref{S:phase1} (Phase 1), \S\ref{S:phase2} (Phase 2), and \S\ref{S:phase3} (Phase 3).

The assumption that $v \gtrsim |q|^{\frac{13}{14}(1+2\eps)}$ is new to our $q<0$ analysis; no assumption of this strength was required in the $q>0$ case treated in \cite{HMZ}.  It is needed in order to iterate over unit-sized time intervals in the post-interaction phase.  The perturbative equation in that analysis has a forcing term whose size can be at most comparable to the size of the initial error.  The condition that emerges is $|q|^{3/2}(\text{error})^2 \leq c$.  Since the error bestowed upon us from the interaction phase analysis is $|q|^\frac13v^{-\frac76(1-\eps)}$, the condition $|q|^{3/2}(\text{error})^2 \leq c$ equates to $v \gtrsim |q|^{\frac{13}{14}(1+2\eps)}$.  Provided this condition is satisfied, we can interate over $\sim \eps \log v$ unit-sized time intervals, with the error bound doubling over each interval, and incur a loss of size $v^{\eps}$.  This enables us to reach time $\eps \log v$.

The assumption $v \gtrsim |q|^{\frac{13}{14}(1+2\eps)}$ is not a serious limitation, however, since the most interesting phenomenom (even splitting or near even splitting) occurs for $|q|\sim v$.  Furthermore, if the analysis is only carried through the interaction phase (ending at time $|x_0|/v+v^{-1+\eps}$) and no further, then only the assumption $v \gtrsim |q|^{\frac12(1+\eps)}$ is needed.  We believe that if our post-interaction arguments are amplified with a series of technical refinements, we could relax the restriction needed there from $v \gtrsim |q|^{\frac{13}{14}(1+2\eps)}$ to $v \gtrsim |q|^{\frac12(1+\eps)}$.  On the other hand, the condition $v \gtrsim |q|^{\frac12(1+\eps)}$ shows up in a more serious way in the interaction analysis, and to relax this restriction even further (if it is possible) would require a more significant new idea.

The condition $v \gtrsim |q|^{\frac12(1+\eps)}$ comes about as a result of applying Strichartz estimates to the flow $e^{-itH_q}\phi$ rather than just to the dispersive part $e^{-itH_q}(\phi-P\phi)$, and the additional error found in the $q<0$ case compared to the $q>0$ case arises in the same way.  As discussed in Theorem \ref{th:3} below, in the case $q>0$ we have $\mathcal{O}(v^{-1+})$ in place of ${\mathcal O}( |q|^\frac13v^{-\frac76+})+\mathcal{O}(v^{-1+})$ for $q<0$ (which in the crucial regime $|q| \sim v$ becomes $\mathcal{O}(v^{-\frac 56 +})$).    However, the numerical study conducted in \cite{HMZ2} (see equation (2.4), Table 2, and Fig.\ 5 in that paper) suggests that the trapping at the origin should be exponentially small instead, indicating that this is probably only an artifact of our method of proof.
 
The proof of Theorem \ref{th:1} is based entirely upon estimates for the perturbed and free linear propagators, and some nonlinear conservation laws (energy and mass); there is no use of the inverse scattering theory.  However, as in \cite{HMZ}, we can combine the inverse scattering theory with the proof of Theorem \ref{th:1} to obtain a strengthened result giving more information about the behavior of the outgoing waves.  This result we state as:

\begin{thm}
\label{th:2} 
Under the hypothesis of Theorem \ref{th:1} and for 
$$ \frac{|x_0|}{v}+1 \leq  t  \leq \eps \log v, $$ 
we have 
\begin{equation}
\label{eq:th2}
u(x,t) = 
\begin{aligned}[t]
&\phi_0(|t_q(v)|) e^{\frac{1}{2}i|\tilde T_q(v)|^2t}e^{i\arg t_q(v)} e^{ixv}e^{-itv^2}\tilde T_q(v) \sech(\tilde T_q(v)(x-x_0-tv))\\
&+\phi_0(|r_q(v)|) e^{\frac{1}{2}i|\tilde R_q(v)|^2t}e^{i\arg r_q(v)} e^{-ixv}e^{-itv^2}\tilde R_q(v) \sech(\tilde R_q(v)(x+x_0+tv))\\
& + \mathcal{O}_{L_x^\infty}\left(\left(t-\frac{|x_0|}{v}\right)^{-1/2}\right) +\mathcal{O}_{L_x^2}(|q|^{\frac13}v^{-\frac76(1-2\eps)})+\mathcal{O}(v^{-1+2\eps})
\end{aligned}
\end{equation}
where
\begin{equation}
\label{eq:th3}
\tilde T_q(v)=[2|t_q(v)|-1]_+, \qquad \tilde R_q(v)=[2|r_q(v)|-1]_+\, ,
\end{equation}
$$\phi_0(\alpha) = \int_0^\infty \log\left( 1 + \frac{\sin^2\pi \alpha}{\cosh^2\pi \zeta} \right) \frac{\zeta}{\zeta^2+(2\alpha-1)^2} \, d\zeta$$
When $ 2 | t_q ( v ) | =1 $ or $ 2 | r_q ( v ) | = 1 $ the first error term in \eqref{eq:th2} is modified to $ {\mathcal O}_{ L^\infty_x } ( (\log  ( t - |x_0|/v ))/( t - |x_0|/v ))^{\frac12} ) $.  
\end{thm}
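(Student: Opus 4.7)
The plan is to build on Theorem~\ref{th:1} by tracking the full $L^2$ linear-scattering approximation \eqref{E:linscat} through the interaction, and then applying the Satsuma--Yajima inverse scattering formula for the free NLS to each outgoing wave packet. This parallels the passage from Theorem~1 to Theorem~2 in \cite{HMZ}, with the added complication that $H_q$ carries a bound state when $q<0$. At the end of the interaction phase (time $t_1 \defeq |x_0|/v + v^{-1+\eps}$), the estimates of \S\ref{S:phase1}--\S\ref{S:phase2} underlying Theorem~\ref{th:1} in fact deliver
\begin{equation*}
u(x,t_1) = t_q(v)\,e^{ixv}e^{-it_1 v^2/2}\sech(x - x_0 - t_1 v) + r_q(v)\,e^{-ixv}e^{-it_1 v^2/2}\sech(x + x_0 + t_1 v) + E_1(x),
\end{equation*}
with $\|E_1\|_{L^2} = \mathcal{O}(|q|^{1/3}v^{-7/6(1-2\eps)}+v^{-1+2\eps})$; the two $\sech$ profiles are spatially separated by $\sim 2vt_1 \gg 1$ and move apart at relative speed $2v$.

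Next, I would use the Galilean invariance of \eqref{E:fNLS} together with the exponential decay of the $\sech$ profiles to analyze the transmitted and reflected pieces independently for an $\mathcal{O}(1)$-length post-interaction window. Boosting each piece into its own rest frame and translating to the origin reduces the initial condition to $\alpha\sech(x)$, with $\alpha = |t_q(v)|$ or $\alpha = |r_q(v)|$, multiplied by the unit-modulus phase $e^{i\arg t_q(v)}$ or $e^{i\arg r_q(v)}$. Since $\alpha \in (0,1]$, the Satsuma--Yajima analysis for this Cauchy problem produces a single soliton of amplitude $[2\alpha - 1]_+$ carrying the Satsuma--Yajima phase shift $\phi_0(\alpha)$, plus a dispersive tail of size $\mathcal{O}_{L^\infty_x}((t-t_1)^{-1/2})$, with a $(\log T /T)^{1/2}$ modification in the zero-energy resonant cases $2\alpha=1$. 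Undoing the boost for each component reconstitutes the two explicit soliton terms displayed in \eqref{eq:th2}.

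Finally I would propagate this soliton-plus-dispersion ansatz across the $\sim \eps\log v$ unit-time intervals filling $[|x_0|/v+1,\,\eps\log v]$ by the Phase~3-type comparison of \S\ref{S:phase3}. On each interval, setting $w \defeq u - u_{\text{ansatz}}$ one obtains a perturbed NLS for $w$ whose inhomogeneity consists of $q\delta_0(x)(u_{\text{ansatz}}+w)$ together with cubic remainders. The exponential decay of both the fast-moving solitons away from the origin and of the linear eigenfunction $e^{-|q||x|}$ keeps the $L^2$ mass fed into $w$ per step comparable to the initial error, so the hypothesis $v \gtrsim |q|^{13/14(1+2\eps)}$ allows a controlled doubling of the error across $\sim\eps\log v$ steps, with a total loss of only $v^\eps$. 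The main obstacle, inherited from Theorem~\ref{th:1}, is that global Strichartz estimates fail for $e^{-itH_q}$ when $q<0$; one must decompose every profile at each iteration step into its projection $P$ onto $e^{-|q||x|}$ and its orthogonal dispersive complement, controlling the bound-state piece directly by nonlinear energy estimates before invoking Strichartz for the dispersive part.
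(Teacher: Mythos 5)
Your proposal is essentially the paper's own argument: the paper reduces Theorem \ref{th:2} to the approximation~\eqref{E:post}, which is exactly what Phase~3 of the proof of Theorem~\ref{th:1} already delivers, and then invokes the inverse-scattering (Satsuma--Yajima) asymptotics from \S4 and Appendix~B of \cite{HMZ} for the free nonlinear evolution of $\alpha\sech x$. Your last paragraph merely re-describes the Phase~3 iteration already carried out in \S\ref{S:phase3} for Theorem~\ref{th:1}, so no new iteration with a ``soliton-plus-dispersion'' ansatz is actually required; once~\eqref{E:post} is in hand, the Satsuma--Yajima time asymptotics (a statement purely about $\nlso$, with no dependence on $q$) give \eqref{eq:th2} directly, and the steps you list (Galilean boost, gauge factor $e^{i\arg t_q(v)}$, amplitude $[2\alpha-1]_+$, phase shift $\phi_0(\alpha)$, dispersive tail $\mathcal{O}_{L^\infty}((t-t_1)^{-1/2})$ with the logarithmic modification at $2\alpha=1$) match the cited argument.
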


The proof of Theorem \ref{th:2} is not discussed in the main body of this paper, since all of the needed information is contained in \S4 and Appendix B of \cite{HMZ}.    The main point is that Theorem \ref{th:1} in fact establishes that for times $|x_0|/v+1\leq t\leq \eps\log v$, we have
$$ u(x,t) = 
\begin{aligned}[t]
& e^{-itv^2/2}e^{it_2/2}e^{ixv}\nlso(t-t_2)[t(v)\sech(x)](x-x_0-tv) \\
&+ e^{-itv^2/2}e^{it_2/2}e^{-ixv}\nlso(t-t_2)[r(v)\sech(x)](x+x_0+tv) \\
&+\mathcal{O}(v^{-(1-\eps)}) + \mathcal{O}(|q|^\frac 13v^{-\frac 76(1-2\eps)})
\end{aligned}
$$
where $\nlso(t)\phi$ denotes the free nonlinear flow according to \eqref{E:fNLS}.  This is the starting point of the arguments provided in \S4 and Appendix B of \cite{HMZ}, which carry out an asymptotic (in time) description of the free nonlinear evolution of $\alpha\sech x$, for a constant $0\leq \alpha<1$.

Although the main point of the present paper is to handle the difficulties involved in the case $q<0$ stemming from the presence of a linear eigenstate, some of the refinements we introduce (specifically, cubic correction terms in the interaction phase analysis) improve the result of \cite{HMZ} in the case $q>0$.  In fact, these refinements are simpler when carried out for $q>0$ directly, and we therefore write them out separately in that setting in \S \ref{S:phase2pos}.  We summarize the results as:

\begin{thm}
\label{th:3}
In the case $q>0$, the assumption \eqref{E:vlarge} in Theorem \ref{th:1} can be replaced by the less restrictive $ v \ge C(\log |q|)^{1/\eps} + C_{\eps,n} \la q \ra^{\frac 1n}$, and the conclusion \eqref{eq:th} holds with the first error term dropped (that is, $\mathcal{O}_{L_x^2}(|q|^{\frac13}v^{-\frac76(1-2\eps)})$ is dropped and only $\mathcal{O}(v^{-1+2\eps})$ is kept).  Also, the conclusion of Theorem 2 holds with $\mathcal{O}_{L_x^2}(|q|^{\frac13}v^{-\frac76(1-2\eps)})$ dropped from \eqref{eq:th3}.
\end{thm}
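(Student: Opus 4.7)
The plan is to reuse the three-phase decomposition outlined in \S\ref{proof}, while exploiting two major simplifications that appear when $q>0$. First, $H_q=-\frac12\partial_x^2+q\delta_0$ has purely absolutely continuous spectrum in this case, so there is no eigenstate direction to split off: Strichartz estimates hold for $e^{-itH_q}\phi$ itself (not just for a dispersive part $e^{-itH_q}(\phi-P\phi)$) with constants independent of $q\geq 0$, as in \cite{HMZ}. This immediately removes the source of the $|q|^{1/2(1+\eps)}$ lower bound on $v$, and with it the eigenstate trapping mechanism responsible for the restriction $v\gtrsim|q|^{13/14(1+2\eps)}$ in Theorem \ref{th:1}.

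Second, in the interaction phase I would install the cubic-correction refinement that the introduction advertises for \S\ref{S:phase2pos}, rather than just comparing $u$ to the linear approximation $u_{\text{lin}}=e^{-itH_q}[e^{ixv}\sech(\cdot-x_1)]$ supplied by \eqref{E:linscat}. Writing $u=u_{\text{lin}}+w$, the remainder $w$ satisfies a perturbation equation driven by the cubic term $|u_{\text{lin}}|^2u_{\text{lin}}$; estimating this source in dual Strichartz norms and exploiting that the interaction window has length $\lesssim v^{-1+\eps}$, one obtains an interaction-phase error of order $v^{-1+2\eps}$. The $|q|^{1/3}v^{-7/6}$ term of Theorem \ref{th:1} arose solely from having to control projections onto the eigenstate of $H_q$; with no eigenstate present that term does not appear, and the cubic-correction bookkeeping yields the cleaner $\mathcal{O}(v^{-1+2\eps})$.

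Phase 1 (pre-interaction) is essentially unchanged: on $[0,|x_0|/v - v^{-1+\eps}]$ the soliton is far from the origin, so comparing $u$ to the free NLS soliton using global Strichartz bounds for $e^{-itH_q}$ proceeds exactly as in \cite{HMZ}. In Phase 3 (post-interaction), since there is no bound state one does not need the refined outgoing decomposition or the nonlinear energy estimate that forced the $|q|^{13/14}$ condition; instead I would iterate a direct Strichartz/mass/energy comparison with two freely evolving solitons (centered at $\pm(x_0+tv)$, with amplitudes dictated by the formulas \eqref{E:linscat} and \eqref{eq:th3}) on unit time intervals. Each iteration at worst doubles the accumulated error, so after $\sim\eps\log v$ intervals we pick up a factor $v^\eps$, which is absorbed into the stated $\mathcal{O}(v^{-1+2\eps})$ remainder. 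Summing the three phases and extracting mass in $x>0$ gives \eqref{eq:th} with the first error term dropped; the conclusion of Theorem \ref{th:2} follows by feeding this improved output into the free-NLS asymptotic analysis of \S4 and Appendix B of \cite{HMZ}, exactly as in the proof of our Theorem \ref{th:2}.

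The main technical obstacle is verifying, within the cubic-correction analysis of Phase 2, that the cross terms of the form $t_q(v)\overline{r_q(v)}$ appearing in $|u_{\text{lin}}|^2u_{\text{lin}}$ do not overwhelm the gain expected from the short interaction time: they carry oscillation at frequency $\sim v$ and must be integrated by parts in $x$ (against the sech profiles) to produce the required $v^{-1}$ decay. Once that estimate is in hand, the rest of the argument is a cleaner version of the proof of Theorem \ref{th:1}, and the only hypotheses that remain genuinely needed are $v\geq C(\log|q|)^{1/\eps}$, to guarantee that the $\eps\log v$-long iteration is consistent, and the technical lower bound $v\geq C_{\eps,n}\langle q\rangle^{1/n}$ inherited from the Strichartz constants.
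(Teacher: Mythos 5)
Your proposal gets the overall structure right: with $q>0$ the operator $H_q$ has no eigenstate, so the $P$-projection machinery disappears, the Strichartz estimates apply directly to $e^{-itH_q}\phi$, the Phase~3 iteration needs no separate bound-state component, and the $|q|^{\frac{13}{14}(1+2\eps)}$ restriction evaporates; and the improvement of the $L^2$ error from $v^{-1/2+}$ to $v^{-1+}$ comes from the cubic-correction refinement of Phase~2, carried out in \S\ref{S:phase2pos} via Lemmas \ref{linappp}, \ref{L:approx2p}, and \ref{L:dropintp}. These are precisely the points the paper relies on.

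However, your account of \emph{why} the cubic correction is $\mathcal{O}(v^{-1+\eps})$ is off. You identify as the ``main technical obstacle'' the need to integrate by parts in $x$ against the $e^{ixv}$ oscillation in the cross terms $t_q(v)\overline{r_q(v)}(\cdots)$ to extract a factor $v^{-1}$. That mechanism plays no role in the paper's proof, and in fact it cannot work as stated: the $L^2_x$ norm of an oscillatory profile like $e^{i\kappa x}\sech(x)$ is $O(1)$ regardless of $\kappa$, so integration by parts in $x$ gives no gain at the level of an $L^2_x$ bound. What the paper actually does (see the proof of Lemma~\ref{L:dropintp} and its $q<0$ counterpart Lemma~\ref{L:dropint}) is bound the cubic source \emph{uniformly} in $L^\infty_{[t_1,t_2]}L^2_x$, and then the factor $v^{-1+\eps}$ comes entirely from the length $t_2-t_1 \sim v^{-1+\eps}$ of the interaction window. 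The nontrivial step in obtaining the uniform $L^2_x$ bound is that the naive estimate $\||u_{\mathrm{lin}}|^2 u_{\mathrm{lin}}\|_{L^2_x} \lesssim \|u_{\mathrm{lin}}\|_{L^\infty_x}^2 \|u_{\mathrm{lin}}\|_{L^2_x}$ would cost a factor $\sim v$ through the $H^1$ norm of $e^{ixv}\sech$; the paper avoids this by using the representation of $e^{-itH_q}$ in terms of $e^{-itH_0}$ from Lemma~\ref{p:lin} together with the Galilean commutation $e^{-itH_0}e^{ixv}\psi = e^{-itv^2/2}e^{ixv}e^{-itH_0}\psi(\cdot-vt)$, which strips the large phase \emph{before} taking $L^\infty_x$, leaving only $\|\sech\|_{H^1}$-type quantities. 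You should replace the oscillatory-integration-by-parts step with this Galilean argument; with that change, your outline matches the paper's proof.

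You should also be a little careful with the description ``estimating this source in dual Strichartz norms.'' A straight dual-Strichartz bound $\||u_{\mathrm{lin}}|^2 u_{\mathrm{lin}}\|_{L^{6/5}_{[t_1,t_2]}L^{6/5}_x}\lesssim (t_2-t_1)^{1/2}\|u_{\mathrm{lin}}\|_{X'}^3$ only yields $v^{-1/2+\eps}$. The full $v^{-1+\eps}$ requires the two-step add-and-subtract of Lemmas~\ref{L:approx2p} and \ref{L:dropintp}: first show $\nlsq(t)\phi$ agrees with the \emph{explicit} cubic correction $g(t)$ to order $t_b$ (because every term in the difference has at least one factor of the already-small $w=h-e^{-itH_q}\phi$), and then bound $g(t)-e^{-itH_q}\phi$ itself by the $L^1_t L^2_x$ argument just described.
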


Thus in the $q > 0$ case we improve the $L^2$ error from $\mathcal{O}(v^{-1/2 +})$ to $\mathcal{O}(v^{-1 +})$. It may be possible to to improve this error further to $\mathcal{O}(v^{-2 +})$ using an iterated integral expansion of the error in the spirit of Sections \ref{S:phase2} and \ref{S:phase2pos}, although a more detailed analysis than the one given there would be needed.

We now outline the proof of Theorem \ref{th:1}, the main result of the paper, highlighting the modifications of the argument in \cite{HMZ} needed to address the case of $q<0$.  We will use the following terminology:  the \emph{free linear} evolution is according to the equation $i\partial_t u + \tfrac12\partial_x^2 u =0$, the \emph{perturbed linear} evolution is according to the equation $i\partial_t u + \tfrac12\partial_x^2 u -q\delta_0 u=0$, the \emph{free nonlinear} evolution is according to the equation $i\partial_t u + \tfrac12\partial_x^2 u +|u|^2u=0$, and the \emph{perturbed nonlinear} evolution is according to the equation $i\partial_t u + \tfrac12\partial_x^2 u -q\delta_0 u+|u|^2u=0$.

The analysis breaks into three separate time intervals: Phase 1 (pre-interaction), Phase 2 (interaction), and Phase 3 (post-interaction).  The analysis of Phase 2, discussed in part earlier, is initially based on the principle that at high velocities, the time length of interaction is short $\sim v^{-1+}$, and thus the perturbed nonlinear flow is well-approximated by the perturbed linear flow.  In \cite{HMZ}, this was proved to hold for $q>0$ with a bound on the $L^2$ discrepancy of size $\sim v^{-\frac12+}$.  In the case $q<0$, we suffer some loss in the strength of the estimates due to the flow along the eigenstate $|q|^\frac12e^{-|q||x|}$, and by directly following the approach of \cite{HMZ} the best error bound we could obtain is $\sim |q|^\frac13v^{-\frac23+}+v^{-\frac12+}$.  In the important regime $|q|\sim v$, this gives an error bound of size $v^{-\frac13+}$, which does not suffice to carry through the Phase 3 post-interaction analysis discussed below.    For this reason, we are forced to introduce a cubic correction term to the linear approximation analysis in Phase 2.  The Strichartz based argument then shows that the $L^2$ size of the difference between the solution and the linear flow plus cubic correction is of size $\sim |q|^\frac13v^{-\frac76+} + v^{-1+}$.  However, since the cubic correction term is fairly explicit, we can do a direct analysis of it (not using the Strichartz estimates) and show that it is also of size $v^{-1+}$.  Thus, in the end, we learn that the solution itself is approximated by the perturbed linear flow with error $|q|^\frac13v^{-\frac76+}+v^{-1+}$.  

We then carry out the analysis of the perturbed linear evolution, as disscussed earlier, and show that by the end of the interaction phase, the solution is decomposed into a transmitted component (modulo a phase factor)
\begin{equation}
\label{E:intro_trans}
t(v)e^{ixv}\sech(x-x_0-t_2v)
\end{equation}
and a reflected component (again modulo a phase factor)
\begin{equation}
\label{E:intro_refl}
r(v)e^{-ixv}\sech(x+x_0+t_2v) \,.
\end{equation}
In the post-interaction analysis, we aim to argue that the solution is well-approximated by the free nonlinear flow of \eqref{E:intro_trans} (that we denote $u_{\tr}$) plus the free nonlinear flow of \eqref{E:intro_refl} (that we denote $u_{\refl})$.  It is at this stage that the most serious difficulties beyond those in \cite{HMZ} are encountered.  The approach employed in \cite{HMZ} was to model the solution $u$ as $u= u_{\tr}+u_{\refl}+w$, write the equation for $w$ induced by the equations for $u$, $u_{\tr}$, and $u_{\refl}$, and bound $\|w\|_{L_{[t_a,t_b]}L_x^2}$ over  unit-sized time intervals $[t_a,t_b]$ in terms of the initial size $\|w(t_a)\|_{L^2}$ for that time interval.  This was accomplished by using the Strichartz estimates.  The Strichartz estimates provide a bound on a whole family of space-time norms $\|w\|_{L_{[t_a,t_b]}^qL_x^r}$ where $(q,r)$ are exponents satisfying an admissibilty condition $\frac{2}{q}+\frac1{r}=\frac12$.  This family includes the norm $L_{[t_a,t_b]}^\infty L_x^2$; the other norms (such as $L_{[t_a,t_b]}^6L_x^6$) are needed since they necessarily arise on the right-hand size of the estimates.  From these estimates, we are able to conclude that the error at most doubles over unit-sized time intervals, and thus after $\sim \eps\log v$ time intervals, we have incurred at most an error of size $v^\eps$.

This strategy presents a problem for the case $q<0$, since the linear eigenstate $|q|^{1/2}e^{-|q||x|}$ is well-controlled in $L_{[t_a,t_b]}^\infty L_x^2$ (of size $\sim 1$) but poorly controlled in $L_{[t_a,t_b]}^6L_x^6$ (of size $\sim |q|^\frac13$).  We thus opt to model the post-interaction solution as $u=u_{\tr}+u_{\refl}+u_{\bd}+w$, where $u_{\bd}$ is the \emph{perturbed nonlinear} evolution of the $L_x^2$ projection of $(u(t_a)-u_{\refl}(t_a)-u_{\tr}(t_a))$ onto the linear bound state $|q|^{\frac12}e^{-|q||x|}$.  Then we can use nonlinear estimates based on mass conservation and energy conservation to control the growth of $u_{\bd}$ over the interval $[t_a,t_b]$.  Then $w(t_a)$ is orthogonal to the linear eigenstate, and we can use the Strichartz estimates to control it over the interval $[t_a,t_b]$.  In the estimates, we take care to only evaluate $u_{\bd}$ in one of the norms controlled by mass or energy conservation.   This argument is carried out in detail in \S\ref{S:phase3}.

\noindent
{\sc Acknowledgments.}  We would like to thank  Maciej Zworski for helpful discussions during the preparation of this paper.  The first author was supported in part by NSF grant DMS-0654436 and the second author was supported in part by an NSF postdoctoral fellowship.

\section{Scattering by a delta function}
\label{ros}

Here we present some basic facts about scattering by a $ \delta $-function potential on the real line.  Let $q < 0$ and put
\[ H_q = - \frac{1}2 \frac{d^2}{dx^2} + q \delta_0 ( x ), \qquad H_0 = -\frac 1 2 \frac{d^2}{dx^2} \, .\]
The operator $H_q$ is self-adjoint on the following domain:
\[ \mathcal{D}(H_q) = \{f \in H^2(\RR \setminus \{0\}): f'(0^+) - f'(0^-) = 2q f(0)\},\]
where $f(0)$ means $\lim_{x \to 0} f(x)$ and $f'(0^\pm)$ means $\lim_{x \to 0^\pm} f'(x)$. This can be seen by verifying that the operators $H_q \pm i$ are both symmetric and surjective on $\mathcal{D}(H_q)$. We define special solutions, $ e_\pm ( x , \lambda ) $, to $ ( H_q - \lambda^2 /2 ) e_\pm  = 0 $, as follows
\begin{equation}
\label{211}
e_{\pm}(x,\l) = t_q (\l)e^{\pm i \l x} x_{\pm}^0 
+ (e^{\pm i \l x} + r_q (\l)e^{\mp i\l x}) x_{\mp}^0  \,, 
\end{equation}
where $ t_q $ and $ r_q $ are the transmission and reflection coefficients:
\begin{equation}
\label{eq:tr}
t_q ( \lambda ) = \frac{ i \lambda } { i \lambda - q } \,, \ \ 
r_q ( \lambda ) = \frac{ q} {i \lambda - q } \,.
\end{equation}
They satisfy two equations, one standard (unitarity) and one due to the special structure of the potential:
\begin{equation}
\label{eq:trpr} | t_q ( \lambda ) |^2 + | r_q ( \l ) |^2 = 1 \,, \ \ 
t_q ( \lambda ) = 1 + r_q ( \lambda ) \,.
\end{equation}
Let $P$ denote the $L^2$-projection onto the eigenstate $e^{q|x|}$.  Specifically,  
\begin{equation}
\label{E:Pdef}
P\phi(x) = |q|^{1/2}e^{q|x|} \int_y |q|^{1/2}e^{q|y|}\phi(y)\, dy
\end{equation}
We have $e^{-itH_q}P\phi(x) = e^{\frac{1}{2}itq^2}P\phi(x)$.  Note that $P \phi$ is defined for $\phi \in L_x^r$, $1\leq r \leq \infty$, and by the H\"older inequality,
\begin{equation}
\label{E:PHolder}
\|P\phi\|_{L^{r_2}} \leq c|q|^{\frac{1}{r_1}-\frac{1}{r_2}}\|\phi\|_{L^{r_1}}, \quad 1\leq r_1, r_2 \leq \infty.
\end{equation}

We use the representation of the propagator in terms of the generalized eigenfunctions -- see the notes \cite{TZ} covering scattering by compactly supported potentials.   The resolvent 
\[ R_q ( \lambda ) \defeq ( H_q - \lambda^2 / 2 )^{-1} \,,\]
is given by 
\[R_q ( \l)(x,y) =
\begin{aligned}[t]
\frac{1}{i\l t_q (\l)}\, \big(e_+(x,\l)e_-(y,\l)(x-y)^0_+ + e_+(y,\l)e_-(x,\l)(x-y)^0_-\big)
\end{aligned} \]
Using Stone's thoerem, this gives an explicit formula for the spectral projection, and hence the Schwartz kernel of the propagator:
\begin{equation}
\label{eq:sppr}
e^{ - i t H_q }  =  \frac{1}{2\pi}\int^\infty_0 e^{- i t \lambda^2/2 } \left(e_+(x,\l)\overline{e_+(y,\l)} + e_-(x,\l) \overline{e_-(y,\l)}\right) \,d\l + e^{\frac{1}{2}itq^2}P
\end{equation}
We introduce the following notation for the dispersive part of $e^{ - i t H_q }$:
\[
U_q(t) \defeq e^{ - i t H_q } - e^{\frac{1}{2}itq^2}P.
\]
The propagator for $ H_q  $ is then described in the following
\begin{lem}
\label{p:lin}
Suppose that $ \phi \in L^1 $ and that $ \supp \phi \subset ( -\infty , 0] $.
Then 
\begin{equation}
\label{eq:prop}
e^{-itH_q}\phi(x) = 
\begin{aligned}[t]
& e^{\frac 1 2 itq^2}P\phi(x) + e^{-itH_0}(\phi \ast \tau_q)(x)\, x_+^0 \\
& + (e^{-itH_0}\phi(x) + e^{-itH_0}(\phi\ast \rho_q)(-x))\, x_-^0 
\end{aligned}
\end{equation}
where 
\begin{equation}
\label{eq:ftr}  
\tau_q ( x  ) = \delta_0 ( x) + \rho_q ( x ), \qquad 
\rho_q ( x) = qe^{qx} x_+^0
\end{equation}
\end{lem}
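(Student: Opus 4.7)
The plan is to start from the spectral representation \eqref{eq:sppr} and reduce the identity to its dispersive part $U_q(t)\phi$, since the bound-state piece $e^{\frac{1}{2}itq^2}P\phi$ has been separated on both sides of \eqref{eq:prop}. Because $\supp\phi\subset(-\infty,0]$, only $y\leq 0$ contributes in the kernel integration, so only the ``$y_-^0$'' branches of \eqref{211} appear: $e_+(y,\lambda)=e^{i\lambda y}+r_q(\lambda)e^{-i\lambda y}$ and $e_-(y,\lambda)=t_q(\lambda)e^{-i\lambda y}$.

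I then split by the sign of $x$ and use the corresponding branch of \eqref{211} for $e_\pm(x,\lambda)$. For $x>0$ a short expansion yields
\[ e_+(x,\lambda)\overline{e_+(y,\lambda)}+e_-(x,\lambda)\overline{e_-(y,\lambda)}=t_q(\lambda)e^{i\lambda(x-y)}+\overline{t_q(\lambda)}e^{-i\lambda(x-y)}+\bigl(t_q\overline{r_q}+r_q\overline{t_q}\bigr)e^{i\lambda(x+y)}, \]
and the last coefficient vanishes: plugging $t_q=1+r_q$ into $|t_q|^2+|r_q|^2=1$ forces $\Re r_q=-|r_q|^2$, so $t_q\overline{r_q}+r_q\overline{t_q}=2|r_q|^2+2\Re r_q=0$. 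For $x<0$ the parallel computation, using $|t_q|^2+|r_q|^2=1$ to collapse two of the four cross terms, gives $e^{i\lambda(x-y)}+e^{-i\lambda(x-y)}+\overline{r_q}e^{i\lambda(x+y)}+r_q e^{-i\lambda(x+y)}$.

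To finish I must recognize each of these integrals as the Fourier inverse of the claimed convolutions. A direct calculation gives $\hat\rho_q(\xi)=q/(i\xi-q)=r_q(\xi)$ for all real $\xi$ (this is where $q<0$ is used to ensure integrability), and hence $\hat\tau_q(\xi)=t_q(\xi)$. Writing $e^{-itH_0}f(x)=(2\pi)^{-1}\int_{\RR}e^{i\xi x}e^{-it\xi^2/2}\hat f(\xi)\,d\xi$, splitting $\int_\RR$ into $\xi>0$ and $\xi<0$ halves (substituting $\lambda=-\xi$ on the second), and invoking $t_q(-\lambda)=\overline{t_q(\lambda)}$ and $r_q(-\lambda)=\overline{r_q(\lambda)}$ for real $\lambda$ and real $q$, one identifies the $x>0$ expression above with $e^{-itH_0}(\phi*\tau_q)(x)$ and the $x<0$ expression with $e^{-itH_0}\phi(x)+e^{-itH_0}(\phi*\rho_q)(-x)$. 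The principal nuisance in the bookkeeping is keeping straight the two sign conventions ($\lambda\geq 0$ in \eqref{eq:sppr} versus $\xi\in\RR$ in the Fourier formula for $e^{-itH_0}$), but the only genuinely nonobvious step is the cancellation identity $t_q\overline{r_q}+r_q\overline{t_q}=0$ on the transmitted ($x>0$) side.
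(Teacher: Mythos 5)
Your proposal is correct and follows essentially the same route as the paper's proof: reduce to the dispersive part $U_q(t)\phi$, use the support restriction to keep only the $y\leq 0$ branches of $e_\pm(y,\lambda)$, split by the sign of $x$, and invoke the two algebraic identities ($t_q\overline{r_q}+r_q\overline{t_q}=0$ on the transmitted side, $|t_q|^2+|r_q|^2=1$ on the reflected side) to collapse the kernel sum into Fourier multipliers matching $\hat\tau_q=t_q$ and $\hat\rho_q=r_q$. The only organizational difference is that you expand the full kernel $e_+(x,\lambda)\overline{e_+(y,\lambda)}+e_-(x,\lambda)\overline{e_-(y,\lambda)}$ in one step, whereas the paper first evaluates $\int\overline{e_\pm(y,\lambda)}\phi(y)\,dy$ and then substitutes — a cosmetic reordering. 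Your explicit derivation of the cancellation $\Re(t_q\overline{r_q})=0$ from $t_q=1+r_q$ and unitarity is a small nicety the paper states without proof, and your direct computation of $\hat\rho_q$ for $q<0$ is equivalent to the paper's contour-deformation remark.
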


Observe that we have, using a deformation of contour,
\begin{align*}
\hat \rho_q (\l) &= q\int_0^\infty e^{x(q-i\l)} dx = \frac q {q-i\l} \int_0^{-\infty} e^x dx = r_q(\l) \\
\hat \tau_q (\l) &= 1 + r_q(\l) = t_q(\l).
\end{align*}
Observe also that $H_q R = R H_q $, where $R\phi(x) = \phi(-x)$, so that the restriction on the support of $\phi$ is not a serious one, and the formula will allow us to estimate operator norms of $U_q$ using $e^{-itH_0}$. Indeed, from the Hausdorff-Young inequality for $e^{-itH_0}$ we conclude
\begin{equation}
\label{eq:HY}
\|U_q \phi \|_{L^p} \le \|\hat\phi\|_{L^{p'}},
\end{equation}
where $p \in [2,\infty]$ and $p'= \frac p {p-1}$.
\begin{proof}

It is enough to show
\[
U_q(t)\phi(x) = \big[e^{-itH_0}\phi(x) + e^{-itH_0}(\phi * \rho_q)(-x)\big]x_-^0 + e^{-itH_0}(\phi * \tau_q)(x)x_+^0.
\]
From the definition of the propagator we have
\[
U_q(t)\phi(x) = \frac 1 {2\pi} \int_0^\infty \!\! \int e^{-it\lambda^2/2} \left( e_+(x,\lambda) \overline{e_+(y,\lambda)} + e_-(x,\lambda)\overline{e_-(y,\lambda)}\right) \phi(y) dy d\lambda,
\]
and so we must verify
\begin{align*}
\frac 1 {2\pi} \int_0^\infty e^{-it\lambda^2/2} &\left( e_+(x,\lambda)\int \overline{e_+(y,\lambda)}\phi(y)dy + e_-(x,\lambda)\int\overline{e_-(y,\lambda)} \phi(y) dy \right) d\lambda \\
&= \big[e^{-itH_0}\phi(x) + e^{-itH_0}(\phi * \rho_q)(-x)\big]x_-^0 + e^{-itH_0}(\phi * \tau_q)(x)x_+^0.
\end{align*}
We compute first
\begin{align*}
\int \overline{e_+(y,\lambda)}\phi(y)dy &= \int_{-\infty}^0 e^{-i\lambda y} \phi(y) d\lambda + \overline{r_q(\lambda)} \int_{-\infty}^0 e^{i\lambda y} \phi(y) d\lambda \\
&= \hat\phi(\lambda) + r_q(-\lambda) \hat\phi(-\lambda), \\
\int \overline{e_-(y,\lambda)}\phi(y)dy &= \overline{t_q(\lambda)}\int_{-\infty}^0 e^{i\lambda y} \phi(y) d\lambda = t_q(-\lambda) \hat\phi(-\lambda).
\end{align*}
We first verify the equation for positive $x$:
\begin{align*}
&\frac 1 {2\pi} \int_0^\infty e^{-it\lambda^2/2} \left( e_+(x,\lambda)\int \overline{e_+(y,\lambda)}\phi(y)dy + e_-(x,\lambda)\int\overline{e_-(y,\lambda)} \phi(y) dy \right) d\lambda \\
&=\frac 1 {2\pi} \int_0^\infty e^{-it\lambda^2/2} \left( e_+(x,\lambda)(\hat\phi(\lambda) + r_q(-\lambda) \hat\phi(-\lambda)) + e_-(x,\lambda) t_q(-\lambda) \hat\phi(-\lambda)\right) d\lambda  \\
&=\frac 1 {2\pi} \int_0^\infty e^{-it\lambda^2/2} 
\begin{aligned}[t]
\Big( & t_q(\lambda)e^{i\lambda x}(\hat\phi(\lambda) + r_q(-\lambda) \hat\phi(-\lambda)) \\
&+ (e^{-i\lambda x} + r_q(\lambda) e^{i\lambda x}) t_q(-\lambda) \hat\phi(-\lambda)\Big) d\lambda 
\end{aligned}
\intertext{At this stage we use $t_q(\lambda) r_q(-\lambda) + r_q(\lambda) t_q(-\lambda) = 2 \Re\left(t_q(\lambda) \overline{r_q(\lambda)}\right) = 0$:}
&=\frac 1 {2\pi} \int_0^\infty e^{-it\lambda^2/2} \left( t_q(\lambda)e^{i\lambda x}\hat\phi(\lambda) + e^{-i\lambda x}  t_q(-\lambda)\hat\phi(-\lambda)\right) d\lambda \\
&=\frac 1 {2\pi} \int_{-\infty}^\infty e^{-it\lambda^2/2}  t_q(\lambda) e^{i\lambda x} \hat\phi(\lambda) d\lambda = e^{-itH_0} (\tau_q * \phi) (x).
\end{align*}
The proof for negative $x$ is similar, except that it uses $r_q(\lambda) r_q(-\lambda) + t_q(\lambda) t_q(-\lambda) = |r_q(\lambda)|^2 + |t_q(\lambda)|^2=1$.
\end{proof}

We have two simple applications of Lemma \ref{p:lin}:  the Strichartz estimate (Proposition \ref{p:Str}) and the asymptotics of the linear flow $e^{itH_q}$ as $v \to \infty$ (Proposition \ref{p:as}).  We start with the Strichartz estimate, which will be used several times in the various approximation arguments of \S \ref{proof}.

\begin{prop}
\label{p:Str}
Suppose 
\begin{equation}
\label{eq:eq}
 i \partial_t u ( x , t ) + \tfrac{1}{2}\partial_{x}^2 u ( x, t ) 
- q \delta_0 ( x ) u ( x , t ) = f ( x , t ) \,, \ \ u ( x , 0 ) = \phi ( x ) 
\,.\end{equation}
Let the indices $p,r$, $\tilde p$, $\tilde r$ satisfy
$$2 \leq p, r \leq \infty \,, \ \ 1 \leq \tilde p , \tilde r \leq 2 \,, \ \
\frac 2 p + \frac 1 r = \frac 12 \,, \ \ \ \frac 2 {\tilde p }
+ \frac 1 {\tilde r} = \frac 52$$
and fix a time $T>0$. Then
\begin{equation}
\label{eq:Strneg}
\| u \|_{ L^p_{[0,T]} L^r_x } \leq c (\| \phi \|_{L^2} +T^\frac 1p \| P\phi\|_{L^r} + \|f\|_{L_{[0,T]}^{\tilde p} L_x^{\tilde r}}+T^{\frac 1p}\| Pf \|_{ L_{[0,T]}^1 L_x^r})
\end{equation}
The constant $c$ is independent of $q$ and $T$.  Moreover we can take $f(x,t) = g(t)\delta_0(x)$ and, on the right-hand side, replace $\| f \|_{ L_{[0,T]}^{\tilde p} L_x^{\tilde r} }$  with $\|g\|_{L_{[0,T]}^\frac{4}{3}}$ and replace $T^{\frac 1p}\| Pf \|_{ L_{[0,T]}^1 L_x^r}$ with $T^\frac{1}{p}|q|^{1-\frac{1}{r}}\|g\|_{L_{[0,T]}^1}$.
\end{prop}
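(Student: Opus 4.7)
The plan is to reduce everything to free-propagator Strichartz estimates via the decomposition provided by Lemma \ref{p:lin}. First I would split the solution using the Duhamel formula
$$ u(t) = e^{-itH_q}\phi - i\int_0^t e^{-i(t-s)H_q} f(s)\, ds, $$
and further decompose each propagator as $e^{-itH_q} = U_q(t) + e^{\frac{1}{2}itq^2}P$. The bound-state pieces are immediate: because the projector commutes with the Hamiltonian,
$$\|e^{\frac{1}{2}itq^2}P\phi\|_{L^p_{[0,T]}L^r_x} = T^{1/p}\|P\phi\|_{L^r_x},$$
and by Minkowski in time the forcing contribution satisfies $\|\int_0^t e^{\frac{1}{2}i(t-s)q^2}Pf(s)\,ds\|_{L^p_{[0,T]}L^r_x} \leq T^{1/p}\|Pf\|_{L^1_{[0,T]}L^r_x}$. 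These two quantities account for the bound-state terms on the right-hand side of \eqref{eq:Strneg}.

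Next I would handle the dispersive pieces by feeding Lemma \ref{p:lin} into the standard 1D free-Schr\"odinger Strichartz estimates on the admissible pair $(p,r)$ with $\frac{2}{p}+\frac{1}{r}=\frac{1}{2}$ and its dual. Lemma \ref{p:lin} represents $U_q(t)\phi$ as a sum of $e^{-itH_0}$ applied to $\phi$, to $\phi*\tau_q$, and to $\phi*\rho_q$ (after symmetrization via the operator $R$, which commutes with $H_q$, one may always assume $\supp\phi\subset(-\infty,0]$). Since $\hat\rho_q=r_q$ and $\hat\tau_q=t_q$ are bounded by $1$, Plancherel gives $\|\phi*\rho_q\|_{L^2},\,\|\phi*\tau_q\|_{L^2}\leq\|\phi\|_{L^2}$, and free Strichartz then yields the $\|\phi\|_{L^2}$ bound. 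The same reasoning applied time-slice by time-slice in Duhamel, followed by the inhomogeneous free Strichartz with exponents $(\tilde p,\tilde r)$, controls the dispersive part of the forcing by $\|f\|_{L^{\tilde p}_{[0,T]}L^{\tilde r}_x}$, with constants independent of $q$ because only the unit Fourier bound on $r_q,t_q$ is used.

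For the special case $f(x,t)=g(t)\delta_0(x)$, the bound-state contribution is explicit: from \eqref{E:Pdef} one has $P(g(s)\delta_0)(x)=|q|\,g(s)\,e^{q|x|}$, and a direct computation of the $L^r_x$ norm of $e^{q|x|}$ gives $\|P(g(s)\delta_0)\|_{L^r}\lesssim |q|^{1-1/r}|g(s)|$, so integrating in time and applying Minkowski produces the claimed $T^{1/p}|q|^{1-1/r}\|g\|_{L^1_{[0,T]}}$ term. The dispersive contribution is handled by a Kato-type estimate for the free flow: using the explicit kernel $e^{-itH_0}\delta_0(x)=(2\pi i t)^{-1/2}e^{ix^2/(2t)}$, a $TT^*$/bilinear argument yields
$$\Bigl\|\int_0^t e^{-i(t-s)H_0}\delta_0\,g(s)\,ds\Bigr\|_{L^p_tL^r_x}\leq c\|g\|_{L^{4/3}_t},$$
for admissible $(p,r)$, and this transfers to the perturbed dispersive flow by convolving against $\tau_q$ (or $\rho_q$) exactly as above.

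The main obstacle is the $\delta_0$-forcing step: one cannot simply insert $f=g\delta_0$ into the inhomogeneous Strichartz estimate because $\delta_0\notin L^{\tilde r}_x$ for $\tilde r<\infty$, so the Kato-type trace estimate with the sharp exponent $4/3$ is what actually drives the bound, and one must check that it survives the convolution with $\tau_q$ and $\rho_q$ with a constant independent of $q$ (which it does, because those convolutions are $L^2$-bounded multipliers of norm $\leq 1$).
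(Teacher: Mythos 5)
Your overall strategy matches the paper's proof exactly: split off the bound-state contribution via $e^{-itH_q}=U_q(t)+e^{\frac12 itq^2}P$, control the $P$-pieces directly by the elementary $T^{1/p}$ bounds and Minkowski, and transfer the dispersive part to the free Strichartz estimate by means of the representation in Lemma \ref{p:lin} (after symmetrizing with $R$).

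However, there is a flaw in how you justify that the convolution with $\rho_q$ or $\tau_q$ is harmless. You write that ``only the unit Fourier bound on $r_q,t_q$ is used'' and, at the end, that the convolutions ``are $L^2$-bounded multipliers of norm $\leq 1$.'' Plancherel does give $\|\phi*\sigma_q\|_{L^2}\le\|\phi\|_{L^2}$ from $|\widehat\sigma_q|\le 1$, and this suffices for the homogeneous term. But for the inhomogeneous term you need $\|f*\sigma_q\|_{L^{\tilde p}_tL^{\tilde r}_x}\lesssim\|f\|_{L^{\tilde p}_tL^{\tilde r}_x}$ with $\tilde r\in[1,2]$, and a uniform Fourier bound does not give boundedness of a convolution operator on $L^{\tilde r}_x$ for $\tilde r\neq 2$ (for $\tilde r=1$ no multiplier theorem applies at all). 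The correct and simpler tool --- and the one the paper uses --- is Young's inequality, exploiting the fact that $\|\rho_q\|_{L^1_x}=\int_0^\infty |q|e^{qx}\,dx=1$ and $\|\tau_q\|_{L^1_x}\le 2$ uniformly in $q<0$. The same remark applies to the transfer of the Kato trace estimate for the $\delta_0$ forcing: there too the relevant fact is the $L^1_x$ bound on $\rho_q,\tau_q$, not an $L^2$ multiplier bound. With this correction your argument agrees with the paper's. A further minor point: the paper does not spell out the ``Moreover'' clause for $f=g\delta_0$ (it is implicitly cited from \cite{HMZ}); your sketch via $P(\delta_0)=|q|e^{q|x|}$, the resulting $|q|^{1-1/r}$ factor, and the Kato trace estimate with exponent $4/3$ is a correct filling-in of that step.
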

\begin{proof}
This will follow from
\begin{equation}\label{eq:Struq}
\left\|U_q(t)\phi(x) + \int_0^t U_q(t-s)f(x,s)ds \right\|_{L^p_{[0,T]}L^r_x} \le c(\| \phi \|_{L^2} + \|f\|_{L^{\tilde p}_{[0,T]}L^{\tilde r}_x }).
\end{equation}
To prove this estimate, we observe that the case $q=0$ is the standard Euclidean Strichartz estimate (see \cite{KT} and \cite[Proposition 2.2]{HMZ}). The case $q \ne 0$ reduces to this case as follows. We write $\phi^- = x^0_- \phi$ and $\phi^+(x) = R x^0_+ \phi$ where again $R\phi(x) = \phi(-x)$. Note that $\phi = \phi^- + R \phi^+$, that $U_q(t) R = R U_q(t)$, and that $(Rf)*g = R(f*Rg)$. Now, from Lemma \ref{p:lin}, we have
\begin{align*} U_q \phi = &\big[U_0(t)\phi^- + U_0(t)(\phi^- * \rho_q)\big]x_-^0
+ U_0(t)(\phi^- * \tau_q)x_+^0 \\
&+ \big[U_0(t)\phi^+ + U_0(t)(\phi^+ * R\rho_q)\big]x_+^0
+ U_0(t)(\phi^+ * R\tau_q)x_-^0. \end{align*}
We must now show that $\|f^\pm * \sigma_q\|_{L^{\tilde p}_{[0,T]}L^{\tilde r}_x } \le c\|f\|_{L^{\tilde p}_{[0,T]}L^{\tilde r}_x}$, where $\sigma_q$ is either $\tau_q$ or $\rho_q$. This follows from applying Young's inequality to the spatial integral.

This completes the proof of (\ref{eq:Struq}). To obtain (\ref{eq:Strneg}), we observe that $\|P\phi(x)\|_{L^p_{[0,T]}L^r_x} = T^{1/p}\|P\phi(x)\|_{L^r_x}$ and $\left\|\int_0^t P f(x,s) ds \right\|_{L^p_{[0,T]}L^r_x} \le T^{1/p}\|Pf(x,s)\|_{L^1_{[0,T]}L^r_x}$. The first is immediate, and the second follows from the generalized Minkowski inequality:
\[
\left\|\int_0^t P f(x,s) ds \right\|_{L^p_{[0,T]}L^r_x} \le \left\|\int_0^t \|P f(x,s) \|_{L^r_x}ds\right\|_{L^p_{[0,T]}} \le T^{1/p}\int_0^T \|P f(x,s) \|_{L^r_x}ds.
\]
\end{proof}

We now turn to the large velocity asymptotics of the linear flow $e^{-itH_q}$.

\begin{prop}
\label{p:as}
Let $\theta(x)$ be a smooth function bounded, together with all of its derivatives, on $\mathbb{R}$.  Let $\phi\in \mathcal{S}(\mathbb{R})$, $v>0$, and suppose $\supp [\theta(x)\phi(x-x_0)] \subset (-\infty,0]$.  Then for $2|x_0|/v \leq t \leq 1$,
\begin{equation}
\label{E:as2}
e^{-itH_q}[e^{ixv}\phi(x-x_0)] =
\begin{aligned}[t]
& t(v)e^{-itH_0}[e^{ixv}\phi(x-x_0)] + r(v) e^{-itH_0}[e^{-ixv}\phi(-x-x_0)] \\
&+ e(x,t)
\end{aligned}
\end{equation}
where, for any $k\geq 0$,
$$\|e(x,t)\|_{L_x^2} \leq 
\begin{aligned}[t]
&\frac{1}{v}\|\partial_x [\theta(x)\phi(x-x_0)]\|_{L_x^2} +\frac{c_k}{(tv)^k} \| \la x \ra^k \phi(x) \|_{H_x^k}\\
&+4\|(1-\theta(x))\phi(x-x_0)\|_{L_x^2} + \|P[e^{ixv}\theta(x)\phi(x-x_0)]\|_{L_x^2}
\end{aligned}$$
\end{prop}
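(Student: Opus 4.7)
The plan is to reduce \eqref{E:as2}, via a cutoff and Lemma \ref{p:lin}, to a Fourier-multiplier comparison together with a free-flow tail estimate. Set $\phi_0(x) \defeq e^{ixv}\theta(x)\phi(x-x_0)$. Unitarity of $e^{-itH_q}$ and $e^{-itH_0}$, together with $|t(v)|, |r(v)| \leq 1$, implies that replacing $e^{ixv}\phi(x-x_0)$ by $\phi_0$ on both sides of \eqref{E:as2} introduces an $L^2$ discrepancy of at most $3\|(1-\theta)\phi(\cdot-x_0)\|_{L^2}$, which is absorbed into the third stated error term. Since $\phi_0$ is supported in $(-\infty,0]$, Lemma \ref{p:lin} applies and gives $e^{-itH_q}\phi_0 = e^{\frac{1}{2}itq^2}P\phi_0 + U_q(t)\phi_0$, whose eigenstate piece accounts exactly for the fourth error term. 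It thus remains to bound in $L^2$ the discrepancy between $U_q(t)\phi_0$ and the target
$$t(v)e^{-itH_0}\phi_0(x) + r(v)(e^{-itH_0}\phi_0)(-x),$$
obtained by using $R\phi_0(x) = \phi_0(-x)$ together with $[e^{-itH_0}, R] = 0$.

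Using $t(v) = 1 + r(v)$ and $\tau_q = \delta_0 + \rho_q$, both expressions contain a common $e^{-itH_0}\phi_0(x)$ that cancels on subtraction; on the half-line $x > 0$ the remainder is
$$e^{-itH_0}(\phi_0 * \rho_q)(x) - r(v) e^{-itH_0}\phi_0(x) - r(v)(e^{-itH_0}\phi_0)(-x),$$
and an analogous expression with $x \to -x$ on $x < 0$. The first two terms combine to $e^{-itH_0}\mathcal{F}^{-1}[\hat\phi_0(\lambda)(r_q(\lambda) - r(v))](x)$, whose $L^2$ norm, by Plancherel and the unitarity of $e^{-itH_0}$, equals $\|\hat\phi_0(r_q - r(v))\|_{L^2}$. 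Writing $\hat\phi_0(\lambda) = \hat\psi(\lambda-v)$ with $\psi = \theta\phi(\cdot-x_0)$ and substituting $\mu = \lambda-v$, I split $|\mu| \leq v/2$ versus $|\mu| > v/2$. On the low-frequency piece the bound $|r_q'(s)| = |q|/(s^2+q^2) \leq 1/(2|s|) \leq 1/v$ for $|s-v| \leq v/2$ gives $|r_q(\mu+v) - r(v)| \leq |\mu|/v$, yielding at most $v^{-1}\|\psi'\|_{L^2}$, which is the first error term. On the high-frequency piece, $|r_q - r(v)| \leq 2$ and $\|\mathbbm{1}_{|\mu|>v/2}\hat\psi\|_{L^2} \leq C_k v^{-k}\|\psi^{(k)}\|_{L^2}$; since $t \leq 1$ gives $v^{-k} \leq (tv)^{-k}$, this is absorbed into the second error term.

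The ``wrong-side'' contribution $r(v)(e^{-itH_0}\phi_0)(-x)$ on $x > 0$ (and the symmetric term on $x < 0$) is controlled, up to a harmless reflection, by $|r(v)|\,\|e^{-itH_0}\phi_0\|_{L^2(x<0)}$. The Galilean identity $e^{-itH_0}[e^{ixv}\psi](x) = e^{ixv - itv^2/2}(e^{-itH_0}\psi)(x-tv)$, combined with the translation $\psi(y) = \tilde\psi(y-x_0)$ where $\tilde\psi(w) = \theta(w+x_0)\phi(w)$, gives
$$\|e^{-itH_0}\phi_0\|_{L^2(x<0)}^2 = \int_{w < -tv-x_0} |(e^{-itH_0}\tilde\psi)(w)|^2\,dw \leq \int_{|w|>tv/2}|(e^{-itH_0}\tilde\psi)(w)|^2\,dw,$$
the last step using $tv \geq 2|x_0|$ to absorb the $x_0$-shift. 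On $|w| > tv/2$, a Chebyshev-type bound combined with the commutator identity $w\, e^{-itH_0} = e^{-itH_0}(w + tD)$ (with $D = -i\partial_w$) and the hypothesis $t \leq 1$ yields $\|e^{-itH_0}\phi_0\|_{L^2(x<0)} \leq C_k(tv)^{-k}\|\langle w\rangle^k \tilde\psi\|_{H^k} \leq C'_k(tv)^{-k}\|\langle x\rangle^k \phi\|_{H^k}$, the second error term. The main obstacle is exactly this last step: the commutator identity produces the crude bound $\|w^k e^{-itH_0}\tilde\psi\|_{L^2} \lesssim t^k\|\langle w\rangle^k \tilde\psi\|_{H^k}$, so both hypotheses on $t$ are essential --- $t \leq 1$ to cancel the $t^k$ growth, and $t \geq 2|x_0|/v$ to absorb the $x_0$-shift --- for $(tv)^{-k}$ to be the correct rate.
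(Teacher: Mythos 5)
Your proposal is correct and follows essentially the same route as the paper's: cut off to support in $(-\infty,0]$, invoke Lemma~\ref{p:lin} to isolate the $P$-piece, compare $U_q(t)\phi_0$ to the target via $\tau_q = \delta_0 + \rho_q$ and $t(v) = 1 + r(v)$, bound the Fourier-multiplier discrepancy by Plancherel, and treat the wrong-side tail by pushing weights through $e^{-itH_0}$ using $t\le 1$ and $tv\ge 2|x_0|$. The only (cosmetic) deviations are that you introduce a low/high frequency split for the $r_q(\lambda)-r_q(v)$ comparison — unnecessary, since the algebraic identity gives $|r_q(\lambda)-r_q(v)| = |q|\,|\lambda-v|/(\sqrt{\lambda^2+q^2}\sqrt{v^2+q^2}) \le |\lambda-v|/v$ uniformly in $\lambda$, as the paper uses directly — and that you handle the tail via Chebyshev plus the commutator identity $x\,e^{-itH_0}=e^{-itH_0}(x+tD)$, which is the operator-theoretic rephrasing of the paper's $k$-fold integration by parts in the frequency variable.
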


\begin{rem}
In \S \ref{proof}, Proposition \ref{p:as} will be applied with $\theta(x)$ a smooth cutoff to $x<0$, and $\phi(x)=\sech(x)$ with $x_0=-v^\eps \ll 0$.
\end{rem}

Before proving Proposition \ref{p:as}, we need the following
\begin{lem}
\label{l:as}
Let $\psi\in \mathcal{S}(\mathbb{R})$ with $\supp \psi \subset (-\infty,0]$.  Then
\begin{equation}
\label{E:as1}
\begin{aligned}[t]
U_q(t)[e^{ixv}\psi(x)](x) &= e^{-itH_0}[e^{ixv}\psi(x)](x)x_-^0 + t(v)e^{-itH_0}[e^{ixv}\psi(x)](x)x_+^0 \\
& \qquad + r(v)e^{-itH_0}[e^{-ixv}\psi(-x)](x)x_-^0 + e(x,t),
\end{aligned}
\end{equation}
where 
$$\|e(x,t)\|_{L_x^2} \leq \frac{1}{v}\|\partial_x \psi\|_{L^2}$$
uniformly in $t$.
\end{lem}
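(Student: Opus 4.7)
\smallskip
\noindent\emph{Proof plan.} The strategy is to apply Lemma~\ref{p:lin} to $\phi(x) = e^{ixv}\psi(x)$ (which is supported in $(-\infty,0]$) and compare the resulting formula term-by-term with the asserted decomposition. Lemma~\ref{p:lin} gives
\[ U_q(t)[e^{i\cdot v}\psi](x) = e^{-itH_0}\big((e^{i\cdot v}\psi)*\tau_q\big)(x)\, x_+^0 + \Big[e^{-itH_0}(e^{i\cdot v}\psi)(x) + e^{-itH_0}\big((e^{i\cdot v}\psi)*\rho_q\big)(-x)\Big] x_-^0, \]
so on $x>0$ we must compare $e^{-itH_0}((e^{i\cdot v}\psi)*\tau_q)$ with $t_q(v)\,e^{-itH_0}[e^{i\cdot v}\psi]$, while on $x<0$ the $e^{-itH_0}(e^{i\cdot v}\psi)$ piece already matches the target, leaving the comparison of $e^{-itH_0}((e^{i\cdot v}\psi)*\rho_q)(-x)$ with $r_q(v)\,e^{-itH_0}[e^{-i\cdot v}\psi(-\cdot)](x)$.

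Both comparisons are handled purely on the Fourier side. Using the $L^2$-unitarity of $e^{-itH_0}$, the parity relation $e^{-itH_0}[f(-\cdot)](x) = e^{-itH_0}[f](-x)$ (which follows from the evenness of the free Schr\"odinger kernel), the identities $\hat\tau_q = t_q$ and $\hat\rho_q = r_q$, and the fact that $t_q = 1 + r_q$ so $t_q(\lambda)-t_q(v) = r_q(\lambda)-r_q(v)$, both errors reduce (after a change of variable $\mu = \lambda - v$) to the single estimate
\[ \big\|[r_q(\mu+v) - r_q(v)]\,\hat\psi(\mu)\big\|_{L^2_\mu}. \]

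The remaining step is a direct symbol computation:
\[ r_q(\mu+v) - r_q(v) = \frac{-iq\mu}{(i(\mu+v)-q)(iv-q)}, \]
from which $|r_q(\mu+v) - r_q(v)| \leq |\mu|/v$ follows via the elementary lower bounds $(\mu+v)^2 + q^2 \geq q^2$ and $v^2 + q^2 \geq v^2$ (the sign of $q$ plays no role here). Plancherel then yields the claimed bound $\tfrac{1}{v}\|\partial_x\psi\|_{L^2}$, uniformly in $t$ and $q$. The only real obstacle is the bookkeeping on the $x<0$ piece: one must carefully track the parity symmetry to align $e^{-itH_0}((e^{i\cdot v}\psi)*\rho_q)(-x)$ with the reflected target $r_q(v)\,e^{-itH_0}[e^{-i\cdot v}\psi(-\cdot)](x)$, while the true analytic content is simply the Lipschitz estimate on the symbol $r_q$.
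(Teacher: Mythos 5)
Your proposal is correct and follows essentially the same route as the paper's own proof: apply the kernel formula of Lemma~\ref{p:lin} to $\phi=e^{i\cdot v}\psi$, pass to the Fourier side via Plancherel and the parity/reflection symmetry of $e^{-itH_0}$, and reduce both the transmitted and reflected error terms to the single Lipschitz estimate $|r_q(\lambda)-r_q(v)|=|t_q(\lambda)-t_q(v)|\leq|\lambda-v|/v$. The only cosmetic difference is that you invoke $t_q=1+r_q$ to unify the two symbol differences, whereas the paper computes both directly; the content is identical.
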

\begin{proof}
We apply (\ref{eq:prop}) with $\phi(x) = e^{ixv}\psi(x)$ to find that
\[
e(x,t) = e^{-itH_0}[\phi * (\rho_q - r_q(v)\delta_0)(-x)]x_-^0 + e^{-itH_0}[\phi * (\tau_q - t_q(v)\delta_0)(x)]x_+^0.
\]
We pass to the Fourier transform using Plancherel's theorem:
\[
\|e^{-itH_0} \varphi\|_{L^2} = c \|\hat\varphi\|_{L^2}.
\]
So that it suffices to verify
\[
\|\hat \psi(\lambda-v) (r_q(\lambda) - r_q(v))\|_{L^2} \le \frac c v \|\lambda\hat\psi(\lambda)\|_{L^2}, \]
\[
\|\hat \psi(\lambda-v) (t_q(\lambda) - t_q(v))\|_{L^2} \le \frac c v \|\lambda\hat\psi(\lambda)\|_{L^2}.
\]
We write out now
\[
r_q(\lambda) - r_q(v) = \frac{iq(\lambda - v)}{(i\lambda - q)(iv-q)}, \qquad t_q(\lambda) - t_q(v) = \frac{-iq(\lambda - v)}{(i\lambda - q)(iv-q)}.
\]
\[
|r_q(\lambda) - r_q(v)| = |t_q(\lambda) - t_q(v)| = \frac{|\lambda - v|}{\sqrt{\lambda^2/q^2 + 1}\sqrt{v^2 + q^2}} \le \frac {|\lambda - v|} v.
\]
We plug this in to obtain our estimate:
\[
\|\hat \psi(\lambda - v) (r_q(\lambda) - r_q(v))\|_{L^2} = \|\hat \psi(\lambda - v) (t_q(\lambda) - t_q(v))\|_{L^2} \le \frac 1 v \|\lambda \hat \psi(\lambda)\|_{L^2} \le \frac c v \|\D_x \psi\|_{L^2}.
\]
\end{proof}

Now we turn to the proof of Proposition \ref{p:as}.
\begin{proof}
We will prove (\ref{E:as2}) by showing that
\[
U_q(t)[e^{ixv}\phi(x-x_0)] = t(v)e^{-itH_0}[e^{ixv}\phi(x-x_0)] + r(v)e^{-itH_0}[e^{-ivx}\phi(-x-x_0)] + \tilde e(x,t),
\]
where, for any $k \ge 0$,
\[
\|\tilde e(x,t)\|_{L^2_x} \le \frac 1 v  \|\D_x[\theta(x)\phi(x-x_0)]\|_{L^2_x} + \frac {c_k}{(tv)^k} \| \langle x \rangle^k \phi(x)\|_{H^k_x} + 4 \|(1 - \theta(x))\phi(x-x_0)\|_{L^2_x}.
\]

We use (\ref{E:as1}) with $\psi(x) = \theta(x)\phi(x-x_0)$, which gives (using $e_1$ for the error term arising from the lemma),
\begin{align*}
U_q(t)[e^{ixv}\theta(x)\phi(x-x_0)](x) = & e^{-itH_0}[e^{ixv}\theta(x)\phi(x-x_0)](x)x_-^0 \\
&+ t(v)e^{-itH_0}[e^{ixv}\theta(x)\phi(x-x_0)](x)x_+^0 \\
&+ r(v)e^{-itH_0}[e^{-ixv}\theta(x)\phi(-x-x_0)](x)x_-^0 \\
&+ e_1(x,t).
\end{align*}
We rewrite this equation with $\theta$ omitted at the cost of an additional error term:
\begin{align*}
U_q(t)[e^{ixv}\phi(x-x_0)](x) = &e^{-itH_0}[e^{ixv}\phi(x-x_0)](x)x_-^0 \\
&+ t(v)e^{-itH_0}[e^{ixv}\phi(x-x_0)](x)x_+^0 \\
&+ r(v)e^{-itH_0}[e^{-ixv}\phi(-x-x_0)](x)x_-^0 + e_1(x,t) + e_2(x,t).
\end{align*}
Using the notation $f(x) = e^{ixv}(1 - \theta(x))\phi(x-x_0)$, this error term is given by
\[
e_2(x,t) = U_q(t)f(x) - e^{-itH_0}f(x)x_-^0 - t(v)e^{-itH_0}f(x)x_+^0 - r(v)e^{-itH_0}f(-x)x_-^0.
\]
Recall that $e^{-itH_0}$ is unitary, and that $\|U_q(t)\|_{L^2_x \to L^2_x} = 1$. This gives us a bound on $e_2$:
\[
\|e_2(x,t)\|_{L^2_x} \le 4 \|f\|_{L^2_x} = 4 \|(1 - \theta(x))\phi(x-x_0)\|_{L^2_x}.
\]

We have a bound on $e_1$ from the lemma, so combining everything we have, we see that to prove the proposition it remains only to prove
\begin{align*}
\indentalign \|e^{-itH_0}[e^{ixv}\phi(x-x_0)](x)x_-^0\|_{L^2_x} + \|t(v)e^{-itH_0}[e^{ixv}\theta(x)\phi(x-x_0)](x)x_-^0\|_{L^2_x} \\ 
&+ \|r(v)e^{-itH_0}[e^{-ixv}\phi(-x-x_0)](x)x_-^0\|_{L^2_x} \le \frac {c_k}{(tv)^k} \| \langle x \rangle^k \phi(x)\|_{H^k_x},
\end{align*}
for every $k \ge 0$. However, because $|t(v)| \le 1$ and $|r(v)| \le 1$, and because $e^{-itH_0} R = R e^{-itH_0}$, where $R\phi(x) = \phi(-x)$, it suffices to prove
\begin{equation}\label{suff}
\|e^{-itH_0}[e^{ixv}\phi(x-x_0)](x)x_-^0\|_{L^2_x} \le \frac {c_k}{(tv)^k} \| \langle x \rangle^k \phi(x)\|_{H^k_x}.
\end{equation}
We expand as follows, first using the definition of the propagator:
$$
e^{-itH_0}[e^{ixv}\phi(x-x_0)](x) = \frac 1 {2\pi} \int e^{ix\l - it\l^2/2} \int e^{-i\l y}e^{iyv}\phi(y-x_0)dyd\l.
$$
Here we change variables $y \mapsto y + x_0$ to simplify the Fourier transform.
\begin{align*}
&= \frac 1 {2\pi} \int e^{ix\l - it\l^2/2} e^{ix_0(v - \l)} \hat \phi(\l - v) d\l.
\intertext{Here we change variables $\l \mapsto \l + v$.}
&= \frac 1 {2\pi} e^{ixv}\int e^{ix\l - it(\l+v)^2/2} e^{-ix_0\l} \hat \phi(\l) d\l \\
&= \frac 1 {2\pi} e^{ixv}e^{-itv^2/2}\int e^{i\l(x-x_0 - tv)}e^{-it\l^2/2} \hat \phi(\l) d\l.
\intertext{After $k$ integrations by parts in $\l$, this becomes}
&= \frac 1 {2\pi} \frac {e^{ixv-itv^2/2}} {(i(x-x_0 - tv))^k} \int e^{i\l(x-x_0 - tv)} \D_\l^k\left(e^{-it\l^2/2} \hat \phi(\l)\right) d\l.
\end{align*}
By assumption we have $2|x_0|/v \le t \le 1$, so that $-x_0 -tv < 0$ and $|x-x_0 - tv| \ge |-x_0 - tv| \ge tv/2$ when $x < 0$. Hence
\begin{align*}
\left\| e^{-itH_0}[e^{ixv}\phi(x-x_0)](x)x^0_- \right\|_{L^2_x} &\le \frac {c_k}{(tv)^k} \left\| \int e^{i\l(x-x_0 - tv)} \D_\l^k\left(e^{-it\l^2/2} \hat \phi(\l)\right) d\l \right\|_{L^2_x} \\
&= \frac {c_k}{(tv)^k} \left\|\D_\l^k\left(e^{-it\l^2/2} \hat \phi(\l)\right) \right\|_{L^2_\l} \\
&\le \frac {c_k}{(tv)^k} \sum_{j=0}^k t^j\left\| \langle x \rangle^{k-j} \phi(x) \right\|_{H^j_x}.
\end{align*}
Since $t \le 1$, (\ref{suff}) follows.
\end{proof}

\section{Soliton scattering}
\label{proof}

In this section, we outline the proof of Theorem 1, the details of which are executed in \S \ref{S:phase1}--\ref{S:phase3}.  We recall the notation for operators from \S\ref{ros} and introduce short hand notation for the nonlinear flows:
\begin{itemize}
\item $H_0=-\frac{1}{2} \partial_x^2$.  The flow $e^{-itH_0}$ is termed the ``free linear flow''.
\medskip
\item $H_q = -\frac{1}{2} \partial_x^2+q\delta_0(x)$.  The flow $e^{-itH_q}$ is termed the ``perturbed linear flow''. We also use $U_q(t) \defeq e^{ - i t H_q } - e^{\frac{1}{2}itq^2}P$, the propagator corresponding to the continuous part of the spectrum of $H_q$.
\medskip
\item $\nlsq(t)\phi$, termed the ``perturbed nonlinear flow'' is the evolution of initial data $\phi(x)$ according to the equation $i\partial_tu + \tfrac{1}{2}\partial_x^2 u - q\delta_0(x)u + |u|^2u=0$.
\medskip
\item $\nlso(t)\phi$, termed the ``free nonlinear flow'' is the evolution of initial data $\phi(x)$ according to the equation $i\partial_th + \tfrac{1}{2}\partial_x^2 h + |h|^2h=0$.
\end{itemize}

From \S\ref{in} and the statement of Theorem 1, we recall the form of the initial condition, $u_0(x) = e^{ixv}\sech(x-x_0)$, $v \gtrsim 1$, $x_0 \leq -v^{\eps}$ where $0<\eps<1$ is fixed, and put $u(x,t) = \nlsq(t)u_0(x)$.  We begin by outlining the scheme, and will then supply the details.  In this section, the $\mathcal O$ notation always means $L_x^2$ difference, uniformly on the time interval specified, and up to a multiplicative factor that is independent of $q$, $v$, and $\eps$.

\noindent\textbf{Phase 1 (Pre-interaction)}.  Consider $0\leq t \leq t_1$, where $t_1 = |x_0|/v-v^{-(1-\eps)}$ so that $x_0+vt_1=-v^\eps$.  The soliton has not yet encountered the delta obstacle and propagates according to the free nonlinear flow. Indeed, there exists a small absolute constant $c$ such that $\la q \ra^3 e^{-cv^\eps} \le c$ implies
\begin{equation}
\label{E:approx1}
u(x,t) = e^{-itv^2/2}e^{it/2}e^{ixv}\sech(x-x_0-vt) + \mathcal{O}(|q|^\frac 32 \la q \ra^\frac 12e^{-v^\eps}), \quad 0\leq t\leq t_1.
\end{equation}
This is deduced as a consequence of Lemma \ref{L:approx1} in \S \ref{S:phase1} below.

\noindent\textbf{Phase 2 (Interaction)}.  Let $t_2 = t_1+v^{1-\eps}$, so that $x_0+t_2v = v^\eps$, and consider $t_1\leq t \leq t_2$.  The incident soliton, beginning at position $-v^\eps$, encounters the delta obstacle and splits into a transmitted component and a reflected component, which by $t=t_2$, are concentrated at positions $v^\eps$ and $-v^\eps$, respectively.  More precisely, there exists a small absolute constant $c$ such that if $v \la q \ra^3 \left(e^{\frac{-|q|v^\eps}2} + e^{-cv^\eps }\right) + v^{- \frac 23(1-\eps)}|q|^\frac 13 \le c$, then at the conclusion of this phase (at $t=t_2$),
\begin{equation}
\label{E:approx4}
u(x,t_2) = 
\begin{aligned}[t]
&t(v)e^{-it_2v^2/2}e^{it_2/2}e^{ixv}\sech(x-x_0-vt_2)\\
&+r(v)e^{-it_2v^2/2}e^{it_2/2}e^{-ixv}\sech(x+x_0+vt_2) \\
&+ \mathcal{O}(v^{-\frac 76 (1-\eps)}|q|^{\frac 13}) + \mathcal O(v^{-(1-\eps)}).
\end{aligned}
\end{equation}
This is proved as a consequence of Lemmas \ref{linapp}, \ref{L:approx2}, and \ref{L:dropint} in \S\ref{S:phase2}.

\noindent\textbf{Phase 3 (Post-interaction)}.  Let $t_3=t_2+ \eps\log v$, and consider $[t_2,t_3]$.  Suppose $|q|^{10}\la q \ra^3 v^{-14(1-2\eps)} + v \la q \ra^3 \left(e^{\frac{-|q|v^\eps}2} + e^{-cv^\eps }\right) \le c$ and $\la q\ra v^{-n} \le c_{\eps,n}$, where $c$ is a small absolute constant and $c_{\eps,n}$ is a small constant, dependent only on $\eps$ and $n$, which goes to zero as $\eps \to 0$ or $n \to \infty$. The transmitted and reflected waves essentially do not encounter the delta potential and propagate according the free nonlinear flow, 
\begin{equation}
\label{E:post}
u(x,t) = 
\begin{aligned}[t]
& e^{-itv^2/2}e^{it_2/2}e^{ixv}\nlso(t-t_2)[t(v)\sech(x)](x-x_0-tv) \\
&+ e^{-itv^2/2}e^{it_2/2}e^{-ixv}\nlso(t-t_2)[r(v)\sech(x)](x+x_0+tv) \\
&+\mathcal{O}(v^{-(1-\eps)}) + \mathcal{O}(|q|^\frac 13v^{-\frac 76(1-2\eps)}), \qquad t_2\leq t \leq t_3.
\end{aligned}
\end{equation}

Now we turn to the details. 

\section{Phase 1}
\label{S:phase1}

Let $u_1(x,t) = \nlso(t)u_0(x)$ and note that
$$u_1(x,t) = e^{-itv^2/2}e^{it/2}e^{ixv}\sech(x-x_0-tv)$$
Recall that $t_1= |x_0|/v-v^{-(1-\eps)}$, so that at the conclusion of Phase 1 (when $t=t_1$), the position of the soliton is $x_0+vt_1=-v^{\eps}$.    Recall that  $u(x,t)=\nlsq(t)u_0(x)$ and let $w= u-u_1$.  We will need the following perturbation lemma.

\begin{lem}
\label{L:approx1}
If $t_a < t_b \le t_1$, $t_b - t_a \le c_1 \le 1$, and
\[ \la q\ra^\frac 12\|w(x,t_a)\|_{L^2_x} + |q|^\frac 32 \la q \ra^\frac 12\|u_1(0,t)\|_{L^1_{[t_a,t_b]}} \le c_2|q|^{-\frac 14},\]
then
\[\|w\|_{L^p_{[t_a,t_b]}L^r_x} \le c_3 \left(\la q\ra^\frac 12\|w(x,t_a)\|_{L^2_x} + |q|^\frac 32 \la q \ra^\frac 12\|u_1(0,t)\|_{L^1_{[t_a,t_b]}}\right),\]
where $(p,r)$ is either $(\infty,2)$ or $(4,\infty)$, and the constants $c_1, c_2$, and $c_3$ are independent of the parameters $\eps, v,$ and $q$.\end{lem}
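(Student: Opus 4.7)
The plan is to set $w := u - u_1$, derive the forced linear Schrödinger equation it satisfies, and close via a Strichartz bootstrap. Subtracting the free NLS equation for $u_1 = \nlso(t)u_0$ from the perturbed NLS equation for $u = \nlsq(t)u_0$, and using $\delta_0(x)\,u = u_1(0,t)\delta_0(x) + \delta_0(x)\,w$, one finds
\begin{equation*}
i\partial_t w + \tfrac12 \partial_x^2 w - q\delta_0(x)\,w = q\,u_1(0,t)\,\delta_0(x) - \mathcal N(u,u_1), \qquad w(\cdot,t_a)\ \text{prescribed},
\end{equation*}
with $\mathcal N(u,u_1) := |u|^2u - |u_1|^2u_1$ satisfying the pointwise bound $|\mathcal N(u,u_1)| \le C(|u_1|^2 + |w|^2)|w|$.

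I would apply Proposition \ref{p:Str} on $[t_a,t_b]$ twice, once with the admissible pair $(p,r) = (\infty, 2)$ and once with $(p,r) = (4, \infty)$, using the dual admissible pair $(\tilde p, \tilde r) = (1,2)$ for the nonlinear forcing and the special delta-source form with $g(t) = q\,u_1(0,t)$ for the linear forcing. The projection terms $\|Pw(t_a)\|_{L^r}$ are dominated via \eqref{E:PHolder}, producing at most a $\la q\ra^{1/2}$ factor in front of $\|w(t_a)\|_{L^2}$; the delta-source contributions, using $|u_1(0,t)|\le 1$ together with $t_b - t_a \le 1$ to absorb the $L^{4/3}_t$ piece into the $L^1_t$ piece, sum to a multiple of $|q|^{3/2}\la q\ra^{1/2}\|u_1(0,\cdot)\|_{L^1_{[t_a,t_b]}}$. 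Setting $A := \|w\|_{L^\infty_{[t_a,t_b]} L^2_x}$, $B := \|w\|_{L^4_{[t_a,t_b]} L^\infty_x}$, and using $\|u_1\|_{L^\infty_{t,x}} = 1$, the quadratic-type piece $|u_1|^2|w|$ contributes at most $c_1 A$ in $L^1_t L^2_x$, while H\"older in time gives
\begin{equation*}
\||w|^3\|_{L^1_{[t_a,t_b]} L^2_x} \le A\,\|w\|_{L^2_t L^\infty_x}^2 \le c_1^{1/2}\, A B^2.
\end{equation*}

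Combining the two Strichartz inequalities, with $\Lambda := A + B$ and $\delta := \la q\ra^{1/2}\|w(t_a)\|_{L^2} + |q|^{3/2}\la q\ra^{1/2}\|u_1(0,\cdot)\|_{L^1_{[t_a,t_b]}}$, yields
\begin{equation*}
\Lambda \le C\delta + Cc_1\Lambda + Cc_1^{1/2}\Lambda^3.
\end{equation*}
Choose $c_1 \le 1$ small enough that $Cc_1 \le 1/2$; the linear-in-$\Lambda$ term is then absorbed. A standard continuity argument in the upper endpoint of $[t_a,t_b]$ (based on $\Lambda \to 0$ as that endpoint shrinks to $t_a$) closes the estimate at $\Lambda \le 2C\delta = c_3\delta$, provided the cubic remainder $c_1^{1/2}\Lambda^3$ can be absorbed at every stage of the bootstrap. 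The main obstacle is to keep the $q$-dependent constants appearing in the two Strichartz pairs and in \eqref{E:PHolder} uniformly dominated by the single scalar $\delta$; carefully tracking the $|q|^{1/2}$ loss produced by $\|Pw(t_a)\|_{L^\infty}$ in the endpoint $(4,\infty)$ estimate, when fed through the cubic nonlinearity, is what dictates the precise smallness hypothesis $\delta \le c_2 |q|^{-1/4}$ in the statement.
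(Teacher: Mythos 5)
Your overall strategy matches the paper's: write the forced equation for $w = u - u_1$, run a Strichartz bootstrap in the norm $\|w\|_X = \|w\|_{L^\infty_{[t_a,t_b]}L^2_x}+\|w\|_{L^4_{[t_a,t_b]}L^\infty_x}$ on a short interval, and absorb the linear term by shrinking $c_1$. Using the pointwise bound $|\mathcal N|\le C(|u_1|^2+|w|^2)|w|$ to fold the quadratic term into the linear and cubic ones is a harmless simplification of the paper's five-term expansion. However, there is a concrete gap in the closing estimate.

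Your displayed inequality $\Lambda \le C\delta + Cc_1\Lambda + Cc_1^{1/2}\Lambda^3$ has a cubic coefficient with \emph{no} $q$-dependence, and as written it would close under the much weaker hypothesis $\delta\lesssim 1$ rather than $\delta\lesssim |q|^{-1/4}$. The missing contribution is the $T^{1/p}\|Pf\|_{L^1_{[t_a,t_b]}L^r_x}$ term in Proposition~\ref{p:Str}: you only estimate the $\|f\|_{L^{\tilde p}_tL^{\tilde r}_x}$ part of the forcing. For $f=|w|^2w$ and the endpoint pair $(p,r)=(4,\infty)$, the extra term is $T^{1/4}\|P(|w|^2w)\|_{L^1_tL^\infty_x}$, and \eqref{E:PHolder} with $r_1=2$, $r_2=\infty$ yields a factor $|q|^{1/2}$, so the cubic coefficient is in fact $\sim c_1^{1/2}\la q\ra^{1/2}$. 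It is $|q|^{1/2}\Lambda^2\lesssim 1$ that forces $\delta\lesssim |q|^{-1/4}$. Your diagnostic paragraph instead attributes the $|q|^{1/2}$ loss to $\|Pw(t_a)\|_{L^\infty}$ "fed through the cubic nonlinearity"; that term is the initial-data contribution already absorbed into the definition of $\delta$, and it cannot produce the $q$-dependence in the cubic coefficient. Finally, the continuity argument as stated is slightly off: $\Lambda\to 0$ as the interval shrinks is false because $A=\|w\|_{L^\infty L^2}\to\|w(t_a)\|_{L^2}>0$; one should instead note, as the paper does, that $\Lambda(t_a)\le\|w(t_a)\|_{L^2}<2C\delta$ and run the continuity argument from there.
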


Before proving this lemma, we show how the phase 1 estimate follows from it. Let $k\geq 0$ be the integer such that $kc_1 \leq t_1 < (k+1)c_1$.  (Note that $k=0$ if the soliton starts within a distance $v$ of the origin, i.e. $-v-v^\eps< x_0\leq -v^\eps$, and the inductive analysis below is skipped.) Apply Lemma \ref{L:approx1} with $t_a=0$, $t_b=c_1$ to obtain (since $w(\cdot,0)=0$)
\[\|w\|_{L_{[0,c_1]}^\infty L_x^2} \leq c_3|q|^\frac 32 \la q \ra^\frac 12\|u_1(0,t)\|_{L_{[0,c_1]}^\infty}\leq c_3|q|^\frac 32 \la q \ra^\frac 12\sech(x_0+vc_1).\]
Apply Lemma \ref{L:approx1} again with $t_a=c_1$, $t_b=2c_1$ to obtain
\begin{align*}
\|w\|_{L_{[c_1,2c_1]}^\infty L_x^2} &\leq c_3(\la q \ra^\frac12\|w(\cdot,c_1)\|_{L_x^2}+ |q|^\frac 32 \la q \ra^\frac 12\|u_1(0,t)\|_{L_{[c_1,2c_1]}^\infty}) \\
&\leq c_3^2|q|^\frac 32 \la q \ra\sech(x_0+vc_1)+c_3^1|q|^\frac 32 \la q \ra^\frac 12\sech(x_0+2vc_1).
\end{align*}
We continue inductively up to step $k$, and then collect all $k$ estimates to obtain the following bound on the time interval $[0,kc_1]$:
\begin{align*}
\|w\|_{L_{[0,kc_1]}^\infty L_x^2} &\leq c_3|q|^\frac 32 \sum_{j=1}^k c_3^{k-j} \la q\ra^\frac{k-j}2\sech(x_0+jvc_1).
\intertext{We use here the estimate $\sech \alpha \leq 2e^{-|\alpha|}$:}
& \le c_3^k|q|^\frac32 \la q \ra^\frac {k-1}2 e^{x_0+c_1v} \sum_{j=0}^{k-1}c_3^{-j}\la q\ra^{-\frac{j}{2}}e^{jvc_1}.
\intertext{We introduce here the assumption that $c_3^{-1}\la q\ra^{-\frac{1}{2}}e^{vc_1} \ge 2$, which allows us to estimate the geometric series by twice its final term, giving}
&\le  c |q|^\frac32 e^{x_0+c_1vk}.
\end{align*}
Finally, applying Lemma \ref{L:approx1} on $[k c_1,t_1]$,
\[ \|w\|_{L_{[0,t_1]}^\infty L_x^2} \leq c\left(|q|^\frac32\la q \ra^\frac 12 e^{x_0+c_1vk} + |q|^\frac 32 \la q \ra^\frac 12 \sech(x_0+t_1v)\right) \leq c|q|^\frac32\la q \ra^\frac 12e^{-v^\eps},\]
where we have used $x_0+c_1vk \le x_0+t_1v = - v^\eps$. The constant $c$ here is still independent of $q$, $v$, and $\eps$. As a consequence, \eqref{E:approx1} follows. So long as this last line is bounded by $c\la q\ra^\frac 14$ the repeated applications of Lemma \ref{L:approx1} are justified.

Now we prove Lemma \ref{L:approx1}:
\begin{proof}
We begin by writing a differential equation for $w$ in terms of $u_1$:
\begin{align*}
i\D_t w + \frac 1 2 \D_x^2 w  - q\delta_0(x) w &= u_1|u_1|^2 - (w+u_1)|w + u_1|^2 + q \delta_0(x) u_1 \\
&= - w|w|^2 - 2 u_1 |w|^2 - \ou w^2 - 2|u_1|^2w - u_1^2\ow + q \delta_0(x) u_1.
\end{align*}
We rewrite this as an integral equation in terms of the perturbed linear propagator $e^{-iH_q t}=U_q(t) + e^{itq^2/2}P$, regarding the right hand side as a forcing term.
\begin{align*}
&w(x,t) = \left(U_q(t-t_a) + e^{i(t-t_a)q^2/2}P\right)w(x,t_a) \\
&\qquad + \int_{t_a}^t U_q(t-s)\left(- w|w|^2 - 2 u_1 |w|^2 - \ou w^2 - 2|u_1|^2w - u_1^2\ow + q \delta_0(x) u_1\right) ds \\
&\qquad + \int_{t_a}^t e^{i(t-s)q^2/2}P\left(- w|w|^2 - 2 u_1 |w|^2 - \ou w^2 - 2|u_1|^2w - u_1^2\ow + q \delta_0(x) u_1\right) ds \\
&=  \textrm{I}  +  \textrm{II}  +  \textrm{III}.
\end{align*}
We define $\|w\|_X = \|w\|_{L^\infty_{[t_a,t_b]}L^2_x} + \|w\|_{L^4_{[t_a,t_b]}L^\infty_x}$, and then proceed to estimate the $X$ norm of the right hand side term by term. In what follows, $(p,r)$ denotes either $(\infty,2)$ or $(4,\infty)$.

I. We observe from the Strichartz estimate that
\[\|U_q(t-t_a) w\|_X \le c \|w(x,t_a)\|_{L^2_x}.\]
Next, using the H\"older estimate for $P$, we have
\[\|P w(x,t_a)\|_{L^r_x} = (t_b-t_a)^{1/p}\|P w(x,t_a)\|_{L^r_x} \le c (t_b-t_a)^{1/p}|q|^{1/2 - 1/r}\|w(x,t_a)\|_{L^2_x}.\]
Taken together, and using $t_b - t_a \le c_1$, these bounds can be written as
\[\left\|\left(U_q(t-t_a) + e^{i(t-t_a)q^2/2}P\right)w(x,t_a)\right\|_X \le c \la q\ra^{1/2}\|w(x,t_a)\|_{L^2_x}.\]
II. For the terms involving $U_q$ we will use the Strichartz estimate, which tells us that
\[\left\|\int_{t_a}^t U_q(t-s)f(x,s)ds \right\|_X \le c\|f\|_{L^{\tilde p}_{[t_a,t_b]}L^{\tilde r}_x },\]
whenever $2/\tilde p + 1 / \tilde r = 5/2$. The first term, $w|w|^2$, is cubic, and we will use $\tilde p = 1$ and $\tilde r = 2$:
$$
\left\|\int_{t_a}^t U_q(t-s)w|w|^2(x,s)ds \right\|_X \le  c \|w^3\|_{L^1_{[t_a,t_b]}L^2_x}.
$$
We first pass to the $L^\infty_x$ norm for two of the factors.
\begin{align*}
&\le c \left\|\|w\|_{L^2_x}(t)\|w^2\|_{L^\infty_x}(t)\right\|_{L^1_{[t_a,t_b]}}.
\intertext{And then pass to the $L^\infty_{[t_a,t_b]}$ norm for the other factor.}
&\le c \|w\|_{L^\infty_{[t_a,t_b]}L^2_x}\|w^2\|_{L^1_{[t_a,t_b]}L^\infty_x} = c \|w\|_{L^\infty_{[t_a,t_b]}L^2_x}\|w\|^2_{L^2_{[t_a,t_b]}L^\infty_x}.
\intertext{Finally we use the boundedness of $t_b-t_a$ to pass to the $X$ norm.}
&\le c (t_b-t_a)^\frac 12 \|w\|^3_X.
\end{align*}
The quadratic and linear terms follow the same pattern. Observe that the distinction between $w$ and $\ow$, and between $u_1$ and $\ou$, does not play a role here:
\begin{align*} \left\|\int_{t_a}^t U_q(t-s)u_1|w|^2(x,s)ds \right\|_X &\le c(t_b-t_a)^\frac 12\|u_1\|_X\|w\|^2_X \le c(t_b-t_a)^\frac 12\|w\|^2_X \\
\left\|\int_{t_a}^t U_q(t-s)w|u_1|^2(x,s)ds \right\|_X &\le c(t_b-t_a)^\frac 12\|w\|_{L^6_{[t_a,t_b]}L^6_x}. \end{align*}
For the delta term we have no flexibility in the choice of exponents.
\begin{align*}
|q|\left\|\int_{t_a}^t U_q(t-s)\delta_0(x)u_1(0,s)ds \right\|_X \le c |q| \|u_1(0,t)\|_{L^{4/3}_{[t_a,t_b]}}.
\end{align*}

III. The $P$ terms we will similarly estimate one by one:
$$
\bigg\|\int_{t_a}^t e^{i(t-s)q^2/2}P\big(- w|w|^2 - 2 u_1 |w|^2 - \ou w^2 - 2|u_1|^2w - u_1^2\ow + q \delta_0(x) u_1\big)ds\bigg\|_{{L^1_{[t_a,t_b]}L^r_x}} $$
$$\le 
\begin{aligned}[t]
c \Big( &\|P |w|^3\|_{L^1_{[t_a,t_b]}L^r_x} + \|P u_1 |w|^2\|_{L^1_{[t_a,t_b]}L^r_x}   \\
&+\| P |u_1|^2w \|_{L^1_{[t_a,t_b]}L^r_x} + |q| \| P \delta_0(x)u_1(0,t) \|_{L^1_{[t_a,t_b]}L^r_x}\Big).
\end{aligned}
$$
Here we used a generalized Minkowski inequality to pass the norm through the integral, just as we did in the proof of the Strichartz estimate. Note that the constant $c$ in the second line depends on $p$, but since $p$ only takes two different values we will not make this dependence explicit in our notation. For the delta term as before we have no flexibility in the choice of $L^{r_1}$ norm:
\[|q| \| P \delta_0(x)u_1(0,t) \|_{L^1_{[t_a,t_b]}L^r_x} \le c |q|^{2 - 1/r} \|u_1(0,t)\|_{L^1_{[t_a,t_b]}} \le c |q|^\frac32\la q\ra^\frac12 \|u_1(0,t)\|_{L^1_{[t_a,t_b]}}.\]
For the cubic term in $w$ we proceed using the same H\"older estimate for $P$. Here we use $r_1 = 2$, giving this term a factor no worse than $\la q\ra^\frac12$:
\[\|P |w|^3\|_{L^1_{[t_a,t_b]}L^r_x} \le c\la q\ra^\frac 12 \|w^3\|_{L^1_{[t_a,t_b]}L^2_x} = c\la q\ra^\frac 12 \|w\|^3_X.\]
The last step is the same as that in the $w^3$ term in II above.
We now estimate the quadratic and linear terms in $w$. We have
\begin{align*} \| P u_1w^2\|_{L^1_{[t_a,t_b]}L^r_x} &\le c\|w^2\|_{L^1_{[t_a,t_b]}L^r_x} = c\|w\|^2_{L^2_{[t_a,t_b]}L^{2r}_x} \le c \|w\|^2_X \\
\| P u_1^2w\|_{L^1_{[t_a,t_b]}L^r_x} &\le c \|w\|_{L^1_{[t_a,t_b]}L^r_x} \le c (t_b - t_a)^\frac 34 \|w\|_X. \end{align*}
When $r=\infty$ this last step is achieved by passing from $L^2_{[t_a,t_b]}$ to $L^4_{[t_a,t_b]}$ and using the boundedness of the time interval. When $r=2$ we interpolate using H\"older's inequality between $L^2_x$ and $L^\infty_x$.

Having estimated each of the terms individually, we combine our results. We use $t_b - t_a \le c_1 \le 1$ to pass from lower norms in time to higher ones, only tracking the power of $(t_b-t_a)$ for the linear term in $w$.
\begin{align*}
\|w\|_X \le c \bigg( &\la q\ra^\frac 12\|w(x,t_a)\|_{L^2_x} + |q|^\frac 32 \la q \ra^\frac 12\|u_1(0,t)\|_{L^1_{[t_a,t_b]}} \\
&+ (t_b-t_a)^{1/2}\|w\|_X + \|w\|_X^2 + |q|^{1/3} \|w\|^3_X\bigg).
\end{align*}
We now take $c_1$ sufficiently small (recall that $t_b - t_a \le c_1$) so that the linear term in $\|w\|_X$ can be absorbed into the left hand side:
\[\|w\|_X \le c\left(\la q\ra^\frac 12\|w(x,t_a)\|_{L^2_x} + |q|^\frac 32 \la q \ra^\frac 12\|u_1(0,t)\|_{L^1_{[t_a,t_b]}}+  \|w\|^2_X + |q|^{1/3} \|w\|^3_X\right),\]
with a slightly worse leading constant $c$. We rewrite this inequality schematically using $x = \|w\|_X$:
\[0 \le A - x + Bx^2 + Cx^3.\]
We now consider $\|w\|_{X(t')}$ for $\|w\|_{X(t')} \stackrel{\textrm{def}}= \|w\|_{L^\infty_{[t_a,t']}L^2_x} + \|w\|_{L^4_{[t_a,t']}L^\infty_x}$ for $t' \in [t_a,t_b]$. This is a continuous function of $t'$, and, for each $t' \in [t_a,t_b]$, $\|w\|_{X(t')}$ obeys the above inequality. Therefore if we find a positive value $x_0$ for which the inequality does not hold, we will be able to conclude that $\|w\|_{X(t')} < x_0$ for every $t' \in [t_a,t_b]$, and hence also that $\|w\|_X < x_0$.

We will use $x_0 = 2A$, and arrange $A$, $B$ and $C$ so that this gives a negative right hand side. In fact, we have
\[A - 2A + 4BA^2 + 8CA^3 = -A + A(4BA + 8CA^2).\]
To make this negative we impose $4BA \le \frac 1 4$ and $8CA^2 \le \frac 1 4$. We thus obtain $x \le 2A$, or, in the language of $\|w\|_X$,
\[\|w\|_X \le c \left(\la q\ra^\frac 12\|w(x,t_a)\|_{L^2_x} + |q|^\frac 32 \la q \ra^\frac 12\|u_1(0,t)\|_{L^1_{[t_a,t_b]}}\right),\]
provided that $\la q\ra^\frac 12\|w(x,t_a)\|_{L^2_x} + |q|^\frac 32 \la q \ra^\frac 12\|u_1(0,t)\|_{L^1_{[t_a,t_b]}} \le c|q|^{-1/6}$.
\end{proof}

\section{Phase 2}
\label{S:phase2}

We begin with a succession of three lemmas stating that the free nonlinear flow is approximated by the free linear flow, and that the perturbed nonlinear flow is approximated by the perturbed linear flow. The first lemma states that the nonlinear flows are well approximated by the corresponding linear flows, the second gives a better approximation by adding a cubic correction term, and the third shows that the improvement is retained even if the cubic term is omitted. In other words, we `add and subtract' the cubic correction. Our estimates are consequences of the corresponding Strichartz estimates (Proposition \ref{p:Str}).  Crucially, the hypotheses and estimates of this lemma depend only on the $L^2$ norm of the initial data $\phi$.  Below, the lemmas are applied with $\phi(x)=u(x,t_1)$, and $\|u(x,t_1)\|_{L_x^2}=\|u_0\|_{L^2}$ is independent of $v$; thus $v$ does not enter adversely into the analysis. We first state the lemmas and show how they are applied, deferring the proofs to the end of the section.

\begin{lem}
\label{linapp}
Let $\phi\in L^2$ and $0<t_b$.  If $(t_b^{1/2} + t_b^{2/3} |q|^{1/3}) \le c_1 (\|\phi\|_{L^2} + t_b^{1/6}\|P\phi\|_{L^6})^{-2}$, then
\begin{equation}
\|\nlsq(t)\phi - e^{-itH_q}\phi\|_{L^P_{[0,t_b]}L^R_x} \le c_2 t_b^{1/2}(1 + t_b^{1/P}|q|^{2/P})(\|\phi\|_{L^2} + t_b^{1/6}\|P\phi\|_{L^6})^3,
\end{equation}
where $P$ and $R$ satisfy $\frac 2 {P} + \frac 1 {R} = \frac 1 2$, and $c_1$ and $c_2$ depend only on the constant appearing in the Strichartz estimates. We alert the reader that in our notation $P$ is used both as a Strichartz exponent and as the bound state projection \eqref{E:Pdef}.
\end{lem}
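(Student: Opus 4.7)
The plan is to set $w(t) \defeq \nlsq(t)\phi - e^{-itH_q}\phi$ and $u \defeq \nlsq(t)\phi$, so that $w(0)=0$ and
$$i\partial_t w + \tfrac{1}{2}\partial_x^2 w - q\delta_0(x)\, w = -|u|^2 u.$$
By Duhamel, $w(t) = i\int_0^t e^{-i(t-s)H_q}(|u|^2 u)(s)\, ds$. I apply Proposition \ref{p:Str} with zero initial data and forcing $f = -|u|^2 u$, using the admissible pair $(p,r)=(6,6)$ (for which $2/6+1/6=1/2$) and dual pair $(\tilde p,\tilde r)=(1,2)$ (for which $2+1/2=5/2$), obtaining
$$\|w\|_{L^6_{[0,t_b]}L^6_x} \leq c\bigl(\||u|^3\|_{L^1_{[0,t_b]}L^2_x} + t_b^{1/6}\|P(|u|^3)\|_{L^1_{[0,t_b]}L^6_x}\bigr).$$

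For the first term, H\"older in $x$ and then in $t$ gives $\||u|^3\|_{L^1_t L^2_x} = \|u\|_{L^3_t L^6_x}^3 \leq t_b^{1/2}\|u\|_{L^6_t L^6_x}^3$. For the second term, the $P$-bound \eqref{E:PHolder} with $(r_1,r_2)=(2,6)$ yields $\|P(|u|^3)\|_{L^6_x}\leq c|q|^{1/3}\|u\|_{L^6_x}^3$, whence $\|P(|u|^3)\|_{L^1_t L^6_x}\leq c|q|^{1/3} t_b^{1/2}\|u\|_{L^6 L^6}^3$. Adding these gives $\|w\|_{L^6 L^6} \leq c\,\delta\,\|u\|_{L^6 L^6}^3$, where $\delta \defeq t_b^{1/2}+t_b^{2/3}|q|^{1/3}$. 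Writing $u = e^{-itH_q}\phi + w$ and applying Proposition \ref{p:Str} to the linear piece yields $\|e^{-itH_q}\phi\|_{L^6 L^6} \leq cA$ with $A \defeq \|\phi\|_{L^2} + t_b^{1/6}\|P\phi\|_{L^6}$. A continuity bootstrap on the nondecreasing function $T\mapsto \|w\|_{L^6_{[0,T]} L^6_x}$, permitted by the smallness assumption $\delta A^2 \leq c_1$, then produces $\|w\|_{L^6 L^6} \leq 2c\,\delta A^3$, which is the $(P,R)=(6,6)$ case of the claim.

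To upgrade to an arbitrary admissible pair $(P,R)$, I rerun the Strichartz step with $(p,r)=(P,R)$ and $(\tilde p,\tilde r)=(1,2)$, producing
$$\|w\|_{L^P L^R} \leq c\bigl(\||u|^3\|_{L^1 L^2} + t_b^{1/P}\|P(|u|^3)\|_{L^1 L^R}\bigr),$$
where this time \eqref{E:PHolder} with $r_2=R$ contributes the factor $|q|^{1/2-1/R}=|q|^{2/P}$ (using $2/P+1/R = 1/2$). Since $\|u\|_{L^6 L^6}$ has already been controlled by $cA$ in the previous step, no further bootstrap is needed and one arrives at $\|w\|_{L^P L^R} \leq ct_b^{1/2}(1+t_b^{1/P}|q|^{2/P}) A^3$. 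The main obstacle is the bound-state projection $P$: it blocks a direct use of the classical Strichartz inequality and forces the additional forcing and data terms present in Proposition \ref{p:Str}. The pair $(6,6)$ is chosen for the bootstrap precisely because the $P$-contribution to the linear data term there, $t_b^{1/6}\|P\phi\|_{L^6}$, pairs with $\|\phi\|_{L^2}$ to yield the single combined quantity $A$ that appears in the smallness hypothesis and in the final bound.
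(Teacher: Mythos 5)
Your argument is correct and follows essentially the same strategy as the paper's proof: apply the $q<0$ Strichartz estimate of Proposition \ref{p:Str} with the $P$-dependent data and forcing terms, close a bootstrap in a Strichartz norm to control $\|\nlsq(t)\phi\|_{L^6_{[0,t_b]}L^6_x}$ by $A=\|\phi\|_{L^2}+t_b^{1/6}\|P\phi\|_{L^6}$, and then re-apply the inhomogeneous Strichartz estimate once more for the general admissible pair $(P,R)$. The only minor difference is your choice of dual pair $(\tilde p,\tilde r)=(1,2)$, which lets the cubic term be handled with a single H\"older step and the bootstrap run purely in $L^6_tL^6_x$; the paper instead uses $(\tilde p,\tilde r)=(6/5,6/5)$ and a combined norm $X'=L^\infty_tL^2_x\cap L^6_tL^6_x$, a slightly heavier but equivalent bookkeeping.
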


\begin{lem}
\label{L:approx2}
Under the same hypotheses as the previous lemma,
\begin{align}
\label{E: approx2}
\|\nlsq(t)\phi - g\|_{L^\infty_{[0,t_b]}L^2_x} \le c \bigg[&t_b^2\left(1 + t_b^{1/6}|q|^{1/3}\right)^3 (\|\phi\|_{L^2} + t_b^{1/6}\|P\phi\|_{L^6})^9 \\
\nonumber &+ t_b^{3/2}\left(1 + t_b^{1/6}|q|^{1/3}\right)^2(\|\phi\|_{L^2} + t_b^{1/6}\|P\phi\|_{L^6})^6\|\phi\|_{L^2} \\
\nonumber &+ t_b \left(1 + t_b^{1/6}|q|^{1/3}\right)(\|\phi\|_{L^2} + t_b^{1/6}\|P\phi\|_{L^6})^3\|\phi\|^2_{L^2}\bigg],
\end{align}
where
\[
g(t) = e^{-itH_q}\phi + \int_0^t e^{-i(t-s)H_q}|e^{-isH_q}\phi|^2e^{-isH_q}\phi ds.
\]
\end{lem}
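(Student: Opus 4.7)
The strategy is to recognize $g(t)$ as the first Picard iterate of $\nlsq(t)\phi$ about the perturbed linear flow and control the remainder via Strichartz. Write $u(t)\defeq\nlsq(t)\phi$, $u_L(t)\defeq e^{-itH_q}\phi$, and $w\defeq u-u_L$. The Duhamel formula
\[
u(t) = u_L(t) + i\int_0^t e^{-i(t-s)H_q}|u|^2u(s)\,ds,
\]
combined with the definition of $g$, gives
\[
u(t) - g(t) = i\int_0^t e^{-i(t-s)H_q} F(s)\,ds, \qquad F \defeq |u|^2u - |u_L|^2u_L.
\]
Applying Proposition \ref{p:Str} with output pair $(p,r)=(\infty,2)$ and dual input pair $(\tilde p,\tilde r)=(1,2)$, and using $\|PF\|_{L^2_x}\le\|F\|_{L^2_x}$ (since $P$ is an $L^2$-orthogonal projection), one reduces to estimating $\|F\|_{L^1_{[0,t_b]}L^2_x}$.

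I would then expand $F$ via the identity
\[
F = 2|u_L|^2w + u_L^2\bar w + 2u_L|w|^2 + \bar u_L w^2 + |w|^2w,
\]
splitting $F$ into linear-, quadratic-, and cubic-in-$w$ pieces. Each is estimated by H\"older in space and time using three inputs: unitarity $\|u_L\|_{L^\infty_tL^2_x}\le\|\phi\|_{L^2}$; the Strichartz bound $\|u_L\|_{L^6_tL^6_x}\le c(\|\phi\|_{L^2}+t_b^{1/6}\|P\phi\|_{L^6})$ for the linear flow; and Lemma \ref{linapp}, which yields $\|w\|_{L^6_tL^6_x}\le c\,t_b^{1/2}(1+t_b^{1/6}|q|^{1/3})(\|\phi\|_{L^2}+t_b^{1/6}\|P\phi\|_{L^6})^3$.

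Schematically, the cubic-in-$w$ block contributes $\||w|^3\|_{L^1_tL^2_x}\le t_b^{1/2}\|w\|^3_{L^6_tL^6_x}$, matching the first summand $t_b^2(1+t_b^{1/6}|q|^{1/3})^3(\ldots)^9$. For the quadratic blocks such as $u_L|w|^2$, place one $u_L$ factor in the unitary $L^\infty_tL^2_x$ norm (yielding $\|\phi\|_{L^2}$) and distribute the two $w$'s into an admissible Strichartz pair, producing the middle summand. For the linear-in-$w$ blocks such as $|u_L|^2 w$, place both $u_L$ factors in $L^\infty_tL^2_x$ (yielding $\|\phi\|_{L^2}^2$) and the single $w$ in an admissible Strichartz pair, yielding the last summand.

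The main obstacle is the careful bookkeeping of H\"older exponents in time needed to route each product through an admissible Strichartz pair; in particular, the linear-in-$w$ block requires placing two copies of $u_L$ in $L^\infty_tL^2_x$ while simultaneously keeping $w$ in an admissible norm, which forces small losses in time exponents that are absorbed into powers of $t_b$. The smallness hypothesis on $t_b$ is inherited directly from the hypothesis required to apply Lemma \ref{linapp}.
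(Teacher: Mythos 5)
Your overall architecture matches the paper's: both write $u - g$ as a Duhamel integral of the difference of cubes $F = |u|^2u - |u_L|^2u_L$, expand $F$ in the error $w = u - u_L$, and feed $F$ through the inhomogeneous Strichartz estimate, invoking Lemma~\ref{linapp} for the $w$-norms. Your choice $(\tilde p,\tilde r)=(1,2)$ together with $\|Pf\|_{L^2_x}\le\|f\|_{L^2_x}$ is a viable alternative to the paper's $(6/5,6/5)$ (paper handles $PF$ separately in $L^1_tL^2_x$). However, the H\"older bookkeeping you then describe does not work.

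The decisive error is in the linear-in-$w$ block $|u_L|^2w$: you cannot put both $u_L$ factors in $L^2_x$ and have anything left for $w$, since $1/2 = 1/p_1+1/p_2+1/p_3$ is violated by $p_1=p_2=2$ ($1/p_3 = -1/2$). So ``place both $u_L$ factors in $L^\infty_tL^2_x$... and the single $w$ in an admissible Strichartz pair'' is not a H\"older inequality; the problem is spatial exponents, not time exponents as you suggest. The paper avoids this by placing the $u_L$ factors in the Strichartz norm $L^6_tL^6_x$, controlled by $\|e^{-isH_q}\phi\|_{L^6_tL^6_x}\le c(\|\phi\|_{L^2}+t_b^{1/6}\|P\phi\|_{L^6})$, and placing $w$ in $L^\infty_tL^2_x$. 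A similar issue infects your quadratic block: putting $u_L$ in $L^\infty_tL^2_x$ forces $w^2$ into $L^1_tL^\infty_x$, hence $w$ into $L^2_tL^\infty_x$, which is not admissible; upgrading to $L^4_tL^\infty_x$ produces a factor $(1+t_b^{1/4}|q|^{1/2})$, which is strictly worse than the claimed $(1+t_b^{1/6}|q|^{1/3})$ in the regime $t_b\sim v^{-1+\eps}$, $|q|\sim v$, and would degrade the error in the theorem. The fix in both cases is to stop insisting on the unitary $L^\infty_tL^2_x$ bound for $u_L$ and instead use the $L^6_tL^6_x$ Strichartz estimate for the linear flow, at the cost of replacing $\|\phi\|_{L^2}$ by $\|\phi\|_{L^2}+t_b^{1/6}\|P\phi\|_{L^6}$ in those factors.
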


\begin{lem}
\label{L:dropint}
For $t_1 < t_2$ and $\phi = u(x,t_1)$, we have
\begin{align}\label{E:dropint}
\indentalign \bigg\| \int_{t_1}^{t_2} e^{-i(t_2-s)H_q}|e^{-isH_q}\phi|^2e^{-isH_q}\phi ds \bigg\|_{L^2_x} \\
\nonumber &\leq c \left[(t_2-t_1) + (t_2-t_1)^{\frac 12}\left(\la q \ra^\frac 32 |q|^\frac 32 e^{-v^\eps} + \la q \ra^2 |q|^3 e^{-2v^\eps} + \la q \ra^\frac 52 |q|^\frac 92 e^{-3v^\eps}\right)\right],
\end{align}
where $c$ is independent of the parameters of the problem.
\end{lem}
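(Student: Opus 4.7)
By the Phase 1 estimate \eqref{E:approx1}, we may replace $\phi = u(\cdot, t_1)$ by the exact soliton $\phi_0 := e^{ixv}\sech(x+v^\eps)$ (up to a unimodular phase), absorbing the error $\|\phi - \phi_0\|_{L^2} \lesssim |q|^{3/2}\la q \ra^{1/2}e^{-v^\eps}$ into the right-hand side of \eqref{E:dropint}: the cubic integrand is trilinear in $\phi$, and Strichartz controls each term containing a factor of $\phi - \phi_0$.

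Decompose $e^{-isH_q}\phi_0 = A(s) + B(s)$ with $A(s) := U_q(s)\phi_0$ and $B(s) := e^{isq^2/2}P\phi_0$, and expand $|A+B|^2(A+B)$ into six summands of degree $k\in\{0,1,2,3\}$ in $B$. Estimating the $L^2_x$-norm of the time integral by Minkowski (using unitarity of $e^{-i(t_2-s)H_q}$) followed by H\"older in space and time, the three contributions with $k\in\{1,2,3\}$ acquire $k$ factors of $\|B(s)\|_{L^r_x}$. A direct computation---exploiting that $|q|^{1/2}e^{q|y|}$ is concentrated at the origin while $\sech(y+v^\eps)$ has size $\lesssim e^{-v^\eps}$ there---gives $\|P\phi_0\|_{L^r_x} \lesssim \la q\ra^{1/2}|q|^{1/2-1/r}e^{-v^\eps}$ for $r\in[2,\infty]$; combined with Strichartz bounds on the remaining factors of $A$ and H\"older in time (producing a common $(t_2-t_1)^{1/2}$ factor), this yields the three correction summands in \eqref{E:dropint}.

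The leading $c(t_2-t_1)$ comes from the purely dispersive contribution $|A|^2 A$. The key step is the uniform-in-$s$ estimate $\|A(s)\|_{L^6_x}\lesssim 1$, which gives $\|\int e^{-i(t_2-s)H_q}|A|^2 A\, ds\|_{L^2}\leq \int\|A(s)\|_{L^6}^3\, ds\lesssim (t_2-t_1)$. We introduce a smooth cutoff $\theta$ equal to $1$ on $x\leq -v^\eps/2$ and $0$ on $x\geq 0$; the tail $(1-\theta)\phi_0$ has $L^2$-norm $\lesssim e^{-v^\eps/2}$, and its contribution is absorbed by Strichartz. For the piece $\theta\phi_0$, supported in $(-\infty,0]$, Lemma \ref{p:lin} writes $U_q(s)[\theta\phi_0]$ as a finite sum of free flows $e^{-isH_0}f$, where each $f$ equals $\theta\phi_0$, $\theta\phi_0\ast\tau_q$, or a reflection of $\theta\phi_0\ast\rho_q$. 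Writing $f = e^{ixv}g$ and noting that the modulated kernels $x \mapsto e^{-ixv}\rho_q(x)$ and $x \mapsto e^{-ixv}\tau_q(x)$ have the same $L^1$ (resp.\ $\delta$-plus-$L^1$) bounds as $\rho_q$ and $\tau_q$, $g$ is a convolution of $\theta(x)\sech(x+v^\eps)$ with an $L^1$ kernel, so $\|g\|_{H^1}\lesssim 1$ uniformly in $v$ and $q$. Galilean invariance $e^{-isH_0}[e^{ixv}g]=e^{ixv-isv^2/2}(e^{-isH_0}g)(x-sv)$ together with the Sobolev embedding $H^1\hookrightarrow L^6$ and $H^1$-unitarity of $e^{-isH_0}$ then gives $\|e^{-isH_0}f\|_{L^6}\lesssim\|g\|_{H^1}\lesssim 1$ uniformly in $s$.

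The main obstacle is this uniform $L^6$ bound: Sobolev embedding applied directly to $\phi_0$ fails because $\|\phi_0\|_{H^1}\sim v$, so the passage through $H^1$ must be made only after the plane-wave factor $e^{ixv}$ has been removed via Galilean invariance; this is why the explicit representation of $U_q$ through Lemma \ref{p:lin} (reducing it to free flows plus modulated convolutions) is essential.
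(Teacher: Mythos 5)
Your proposal correctly identifies the central difficulty and the mechanism that resolves it: the uniform-in-$s$ bound on $\|A(s)\|_{L^6_x}$ cannot come from Sobolev embedding applied to $\phi_0$ (since $\|\phi_0\|_{H^1}\sim v$), and instead one must pass through Lemma~\ref{p:lin} to express $U_q(s)$ in terms of free flows and modulated convolutions, strip the plane-wave factor $e^{ixv}$ by Galilean invariance, and only then invoke $H^1$-unitarity of $e^{-isH_0}$ and Sobolev. This is exactly what the paper does (Part~I of its proof), except the paper uses the exact reflection decomposition $\phi_1 = R\phi_{1+}+\phi_{1-}$ instead of a smooth cutoff $\theta$, so it has no tail to dispose of, and it bounds $\| |A|^2 A\|_{L^2}$ via $\|A\|_{L^2}\|A\|_{L^\infty}^2$ rather than $\|A\|_{L^6}^3$; these are cosmetic differences.

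However, your accounting of the exponentially small corrections is off in two linked ways. First, the correction summands $(t_2-t_1)^{1/2}\la q\ra^{3/2}|q|^{3/2}e^{-v^\eps}$, etc., in \eqref{E:dropint} come from the reduction $\phi\to\phi_0$ (the Phase~1 error $\phi_2=\phi-\phi_0$ with $\|\phi_2\|_{L^2}\lesssim|q|^{3/2}\la q\ra^{1/2}e^{-v^\eps}$), which you only dismiss in one sentence as ``absorbed into the right-hand side''; they do \emph{not} come from the bound-state part $B=e^{isq^2/2}P\phi_0$. Second, your stated estimate $\|P\phi_0\|_{L^r_x}\lesssim\la q\ra^{1/2}|q|^{1/2-1/r}e^{-v^\eps}$ has the wrong exponential rate: the computation carried out in the paragraph before \eqref{E:pphi6} gives $|P\phi_1(x)|\le c|q|e^{q|x|}\bigl(e^{-|q|v^\eps/2}+e^{-v^\eps/2}\bigr)$, so the decay is $e^{-v^\eps/2}$, not $e^{-v^\eps}$. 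Consequently, with $k$ factors of $B$ you obtain $e^{-kv^\eps/2}$, which does \emph{not} match the claimed correction summands $e^{-kv^\eps}$ and cannot ``yield'' them. These $B$-terms are indeed negligible, but the clean way to see this (which is what the paper does) is simply to observe that $P$ is bounded on every $L^p$ ($\|P(e^{ixv}\psi)\|_{L^p}\lesssim\|\psi\|_{L^p}$ via \eqref{E:PHolder} with $r_1=r_2$), so the $P$-contributions to the cubic expansion are $O(1)$ pointwise in $s$ and fold directly into the leading $c(t_2-t_1)$ term alongside the purely dispersive part -- no exponential smallness of $P\phi_0$ is needed at all. With that correction to the bookkeeping your argument goes through.
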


In order to apply these lemmas, we need to estimate $\|P\phi\|_{L_x^6}$.  As before, let $\phi_1(x) = e^{-itv^2/2}e^{it/2}e^{ixv}\sech(x-x_0-vt_1)$ and $\phi_2=\phi-\phi_1$.  It suffices to estimate $\|P\phi_1\|_{L_x^6}$ and $\|P\phi_2\|_{L_x^6}$.  By \eqref{E:approx1},
\[\|\phi_2\|_{L_x^2} \leq  c \la q \ra^\frac 32 |q|^\frac 12 e^{-v^\eps},\]
and thus, by \eqref{E:PHolder}
\[\| P\phi_2\|_{L_x^6} \leq   c \la q \ra^\frac 32 |q|^\frac 56 e^{-v^\eps}. \]
On the other hand, a direct computation gives 
\begin{align*}
|P\phi_1(x)| &\le |q| e^{q|x|} \int e^{q|y|} \sech (y - x_0 - vt_1) dy \\
&= |q| e^{q|x|} 
\begin{aligned}[t]
\Big( &\int_{-\infty}^{\frac {x_0+vt_1} 2} e^{q|y|} \sech (y - x_0 - vt_1) dy \\
&+ \int_{\frac {x_0 + vt_1}2}^\infty e^{q|y|} \sech (y - x_0 - vt_1) dy\Big)
\end{aligned}
\end{align*}
In the first integral we use the fact that $e^{q|y|}$ is uniformly small in the region of integration. For the second we use the fact that $e^{q|y|}$ is bounded by 1 and that $\sech \alpha \le 2 e^{-\alpha}$.
$$
|P\phi_1(x)| \le c|q| e^{q|x|} \left(e^{\frac{|q|}2(x_0+vt_1)} + e^{\frac 12 (x_0 + vt_1)}\right) \le c|q| e^{q|x|} \left(e^{\frac{-|q|v^\eps}2} + e^{\frac {-v^\eps} 2}\right).
$$
This implies that
\[\|P\phi_1\|_{L^6_x} \le c|q|^\frac 56 \left(e^{\frac{-|q|v^\eps}2} + e^{\frac {-v^\eps} 2}\right)\]
Combining,
\begin{equation}\label{E:pphi6}\|P\phi\|_{L_x^6} \le c \la q \ra^\frac 32 |q|^\frac 56 \left(e^{-\frac 12v^\eps} + e^{\frac q2 v^\eps}\right).\end{equation}

Set $t_2=t_1+2v^{1-\eps}$ and $\phi(x) = u(x,t_1)$. We now give an interpretation of our three lemmas under the assumption $v \la q \ra^3 \left(e^{\frac{-|q|v^\eps}2} + e^{\frac {-v^\eps} 2}\right) + v^{- \frac 23(1-\eps)}|q|^\frac 13 \le 1$. This makes \eqref{E: approx2} into
\[ \|u(x,t_2) - g(t_2)\|_{L^2_x} \le c \left(v^{-1} + v^{-\frac 76 (1-\eps)}|q|^{\frac 13}\right), \]
and \eqref{E:dropint} into
\[\bigg\|g(t_2) - e^{-i(t_2-t_1)H_q}u(x,t_1)\bigg\|_{L^2_x} \le c v^{-(1-\eps)}. \]
Overall this amounts to
\begin{align*}
u(\cdot,t_2) &= \nlsq(t_2-t_1)[u(\cdot,t_1)] \\
&= 
\begin{aligned}[t]
&e^{-i(t_2-t_1)H_q}[u(\cdot,t_1)]+ \mathcal{O}(v^{-\frac 76 (1-\eps)}|q|^{\frac 13}) + \mathcal O(v^{-(1-\eps)}).
\end{aligned}
\end{align*}
By combining this with \eqref{E:approx1} we find that under our new assumption the errors from this phase are strictly larger, giving
\begin{equation}
\label{E:approx2}
u(\cdot,t_2) = 
\begin{aligned}[t]
&e^{-it_1v^2/2}e^{it_1/2}e^{-i(t_2-t_1)H_q}[e^{ixv}\sech(x-x_0-t_1v)] \\
&+ \mathcal{O}(v^{-\frac 76 (1-\eps)}|q|^{\frac 13}) + \mathcal O(v^{-(1-\eps)}).
\end{aligned}\end{equation}

By Proposition \ref{p:as} with $\theta(x)=1$ for $x\leq -1$ and $\theta(x)=0$ for $x\geq 0$, $\phi(x) = \sech(x)$, and $x_0$ replaced by $x_0+t_1v$,
\begin{equation}
\label{E:approx3}
\begin{aligned}
\indentalign e^{-i(t_2-t_1)H_q}[e^{ixv}\sech(x-x_0-vt_1)](x) \\
&=
\begin{aligned}[t]
&t ( v ) e^{-i(t_2-t_1)H_0}[e^{ixv}\sech(x-x_0-vt_1)](x) \\
&+ r ( v )  e^{-i(t_2-t_1)H_0}[e^{-ixv}\sech(x+x_0+vt_1)](x)  \\
&+ {\mathcal O} (v^{-1}),
\end{aligned}
\end{aligned}
\end{equation}
where we have used the assumption that $e^{-v^\eps} \le v^{-1}$. Now we use \eqref{E: approx2p} and \eqref{E:dropintp} to approximate $e^{-itH_0}$ by $\nlso$, picking up an error of ${\mathcal O}(v^{1-\eps})$. By combining with \eqref{E:approx2} and \eqref{E:approx3}, we obtain
\[
u(\cdot,t) =
\begin{aligned}[t]
&t(v)e^{-it_1v^2/2}e^{it_1/2}\nlso(t_2-t_1)[e^{ixv}\sech(x-x_0-vt_1)](x) \\
&+r(v)e^{-it_1v^2/2}e^{it_1/2}\nlso(t_2-t_1)[e^{-ixv}\sech(x+x_0+vt_1)](x)\\
&+ \mathcal{O}(v^{-\frac 76 (1-\eps)}|q|^{\frac 13}) + \mathcal O(v^{-(1-\eps)}).
\end{aligned}
\]
By noting that
\begin{align*}
\indentalign \nlso(t_2-t_1)[e^{ixv}\sech(x-x_0-t_1v)] \\
&= e^{-i(t_2-t_1)v^2/2}e^{i(t_2-t_1)/2}e^{ixv}\sech(x-x_0-t_2v),
\end{align*}
and
\begin{align*}
\indentalign \nlso(t_2-t_1)[e^{-ixv}\sech(x+x_0+t_1v)] \\
&= e^{-i(t_2-t_1)v^2/2}e^{i(t_2-t_1)/2}e^{-ixv}\sech(x+x_0+t_2v),
\end{align*}
we obtain \eqref{E:approx4}.

Now we prove Lemma \ref{linapp}:
\begin{proof}
Let $h(t) = \nlsq(t)\phi$, so that
\[
i\D_t h + \frac 1 2 \D_x^2 h - q \delta_0(x)h + |h|^2h = 0.
\]
Where in Phase 1 we used $L^4_tL^\infty_x$ as an auxiliary Strichartz norm, here we will use $L^6_tL^6_x$. We introduce the notation $\|h\|_{X'} = \|h\|_{L^\infty_{[0,t_b]}L^2_x} + \|h\|_{L^6_{[0,t_b]}L^6_x}$, and apply the Strichartz estimate
\[
\|h\|_{L^p_{[0,t_b]}L^r_x} \le c(\| \phi \|_{L^2} + t_b^{1/p} \| P \phi \|_{L^r} + \||h|^2h\|_{L^{\tilde p}_{[0,t_b]}L^{\tilde r}_x }+ t_b^{1/p} \|P |h|^2h\|_{L^1_{[0,t_b]}L^r_x})
\]
once with $(p,r) = (\infty, 2)$ and $(\tilde p, \tilde r) = (6/5,6/5)$, and once with $(p,r) = (6,6)$ and $(\tilde p, \tilde r) = (6/5,6/5)$.
We observe that H\"older's inequality implies that $\|f\|^3_{L^p} \le \|f\|_{L^{p_1}}\|f\|_{L^{p_2}}\|f\|_{L^{p_3}}$ provided $\frac 1 p = \frac 1 {3p_1} + \frac 1 {3p_2} + \frac 1 {3p_3}$. This gives us
\begin{align*}
\|h\|^3_{L^{18/5}_{[0,t_b]}L^{18/5}_x} &\le \|h\|^2_{L^6_{[0,t_b]}L^6_x} \|h\|_{L^2_{[0,t_b]}L^2_x} \le c t_b^{1/2} \|h\|^3_{X'}.
\intertext{We also have}
\|P |h|^2h\|_{L^1_{[0,t_b]}L^2_x} &\le \|h\|^3_{L^3_{[0,t_b]}L^6_x} \le t_b^{1/2} \|h\|_{X'}^3, \\
t_b^{1/6}\|P |h|^2h\|_{L^1_{[0,t_b]}L^6_x} &\le c t_b^{2/3}|q|^{1/3} \|h\|_{X'}^3,
\end{align*}
yielding
\[
\|h\|_{X'} \le c(\|\phi\|_{L^2} + \|P\phi\|_{L^2} + t_b^{1/6}\|P\phi\|_{L^6} + t_b^{1/2} \|h\|^3_{X'} + t_b^{2/3} |q|^{1/3} \|h\|_{X'}^3).
\]
We then use the fact that $P$ is a projection on $L^2$ to write
\[
\|h\|_{X'} \le c(\|\phi\|_{L^2} + t_b^{1/6}\|P\phi\|_{L^6} + t_b^{1/2} \|h\|^3_{X'} + t_b^{2/3} |q|^{1/3} \|h\|_{X'}^3).
\]
Using, as in Phase 1, the continuity of $\|h\|_{{X'}(t_b)}$, we conclude that
\[
\|h\|_{X'} \le 2c(\|\phi\|_{L^2} + t_b^{1/6}\|P\phi\|_{L^6}),
\]
so long as $8c^2(t_b^{1/2} + t_b^{2/3} |q|^{1/3})(\|\phi\|_{L^2} + t_b^{1/6}\|P\phi\|_{L^6})^2 \le 1$.

We now apply the Strichartz estimate to $u(t) = h(t) - e^{-itH_q}\phi$, observing that the initial condition is zero and the effective forcing term $-|h|^2h$, to get
\begin{align}
\|h(t) - e^{-itH_q}\phi\|_{L^P_{[0,t_b]}L^R_x} &\le c \||h|^2h\|_{L^{6/5}_{[0,t_b]}L^{6/5}_x} + t_b^{1/P}\|P |h|^2h\|_{L^1_{[0,t_b]}L^R_x}\\
\nonumber&\le ct_b^{1/2}\|h\|_{X'}^3 + ct_b^{1/P}|q|^{1/2 - 1/R}\|h^3\|_{L^1_{[0,t_b]}L^2_x})  \\
\nonumber&\le c(t_b^{1/2} + ct_b^{1/P + 1/2}|q|^{2/P})\|h\|_{X'}^3 \\
\nonumber&\le ct_b^{1/2}(1 + t_b^{1/P}|q|^{2/P})(\|\phi\|_{L^2} + t_b^{1/6}\|P\phi\|_{L^6})^3.
\end{align}
\end{proof}

Now we prove Lemma \ref{L:approx2}:

\begin{proof}
A direct calculation shows
\[
h(t) - g(t)  = \int_0^t e^{-i(t-s)H_q}\left(|h(s)|^2h(s) - |e^{-isH_q}\phi|^2e^{-isH_q}\phi\right)ds.
\]
The Strichartz estimate gives us in this case
\begin{align*}
\|h - g\|_{L^\infty_{[0,t_b]}L^2_x} &\le \||h|^2h - |e^{-isH_q}\phi|^2e^{-isH_q}\phi\|_{L^{6/5}_{[0,t_b]}L^{6/5}_x} \\
&\qquad\qquad+ \|P\left(|h|^2h - |e^{-isH_q}\phi|^2e^{-isH_q}\phi\right)\|_{L^1_{[0,t_b]}L^2_x} \\
&= \qquad \textrm{I} \qquad + \qquad \textrm{II}.
\end{align*}
We introduce the notation $w(t) = h(t) - e^{-itH_q}\phi$, and use this to rewrite our difference of cubes:
\[
|h|^2h - |e^{-isH_q}\phi|^2e^{-isH_q}\phi = 
\begin{aligned}[t]
&w|w|^2 + 2 e^{-isH_q}\phi |w|^2 + e^{isH_q}\overline{\phi} w^2 \\
&+ 2|e^{-isH_q}\phi|^2w + \left(e^{-isH_q}\phi\right)^2\ow.
\end{aligned}
\]
We proceed term by term, using H\"older estimates similar to the ones in the previous lemma and in Phase 1. Our goal is to obtain Strichartz norms of $w$, so that we can apply Lemma \ref{linapp}.

I. We have, for the cubic term,
\begin{align*}
\|w^3\|_{L^{6/5}_{[0,t_b]}L^{6/5}_x} &\le c t_b^{1/2}\|w\|_{L^\infty_{[t_a,t_b]}L^2_x}\|w\|^2_{L^6_{[t_a,t_b]}L^6_x} \\
&\le c t_b^2(1 + t_b^{1/6}|q|^{1/3})^2 (\|\phi\|_{L^2} + t_b^{1/6}\|P\phi\|_{L^6})^9.
\intertext{For the first inequality we used H\"older, and for the second Lemma \ref{linapp}. Next we treat the quadratic and linear terms using the same strategy (observe that as before we ignore complex conjugates):}
\|e^{-isH_q}\phi |w|^2\|_{L^{6/5}_{[0,t_b]}L^{6/5}_x} &\le c t_b^{1/2}\|w\|_{L^\infty_{[t_a,t_b]}L^2_x}\|w\|_{L^6_{[t_a,t_b]}L^6_x}\|e^{-isH_q}\phi\|_{L^6_{[t_a,t_b]}L^6_x} \\
&\le c t_b^{3/2}(1 + t_b^{1/6}|q|^{1/3})(\|\phi\|_{L^2} + t_b^{1/6}\|P\phi\|_{L^6})^6\|\phi\|_{L^2}. \\
\||e^{-isH_q}\phi|^2 w\|_{L^{6/5}_{[0,t_b]}L^{6/5}_x} &\le c t_b^{1/2}\|w\|_{L^\infty_{[t_a,t_b]}L^2_x}\|e^{-isH_q}\phi\|^2_{L^6_{[t_a,t_b]}L^6_x} \\
&\le c t_b (\|\phi\|_{L^2} + t_b^{1/6}\|P\phi\|_{L^6})^3\|\phi\|^2_{L^2}.
\end{align*}

II. In this case we have
\begin{align*}
\|P |w|^2 w\|_{L^1_{[0,t_b]}L^2_x} &\le ct_b^{1/2}\|w\|^3_{L^6_{[0,t_b]}L^6_x} \\
&\le ct_b^2 (1 + t_b^{1/6}|q|^{1/3})^3(\|\phi\|_{L^2} + t_b^{1/6}\|P\phi\|_{L^6})^9, \\
\|P |w|^2 e^{-isH_q}\phi\|_{L^1_{[0,t_b]}L^2_x} &\le ct_b^{1/2}\|w\|^2_{L^6_{[0,t_b]}L^6_x} \|e^{-isH_q}\phi\|_{L^6_{[0,t_b]}L^6_x}\\
&\le ct_b^{3/2}(1 + t_b^{1/6}|q|^{1/3})^2 (\|\phi\|_{L^2} + t_b^{1/6}\|P\phi\|_{L^6})^6\|\phi\|_{L^2}, \\
\|P w |e^{-isH_q}\phi|^2\|_{L^1_{[0,t_b]}L^2_x} &\le ct_b^{1/2}\|w\|_{L^6_{[0,t_b]}L^6_x} \|e^{-isH_q}\phi\|^2_{L^6_{[0,t_b]}L^6_x}\\
&\le ct_b(1 + t_b^{1/6}|q|^{1/3}) (\|\phi\|_{L^2} + t_b^{1/6}\|P\phi\|_{L^6})^3\|\phi\|^2_{L^2}.
\end{align*}

Putting all this together, we see that
\begin{align*}
\|h - g\|_{L^\infty_{[0,t_b]}L^2_x} \le c \bigg[&t_b^2\left(1 + t_b^{1/6}|q|^{1/3}\right)^3 (\|\phi\|_{L^2} + t_b^{1/6}\|P\phi\|_{L^6})^9 \\
&+ t_b^{3/2}\left(1 + t_b^{1/6}|q|^{1/3}\right)^2(\|\phi\|_{L^2} + t_b^{1/6}\|P\phi\|_{L^6})^6\|\phi\|_{L^2} \\
&+ t_b \left(1 + t_b^{1/6}|q|^{1/3}\right)(\|\phi\|_{L^2} + t_b^{1/6}\|P\phi\|_{L^6})^3\|\phi\|^2_{L^2}\bigg].
\end{align*}
\end{proof}

Finally we prove Lemma \ref{L:dropint}:

\begin{proof}
We write $\phi(x) = \phi_1(x) + \phi_2(x)$, where $\phi_1(x) = e^{-it_1v^2/2}e^{it_1/2}e^{ixv}\sech(x-x_0-vt_1)$, and estimate individually the eight resulting terms. We know that for large $v$, $\phi_2$ is exponentially small in $L^2$ norm from Lemma \ref{L:approx1}. This makes the term which is cubic in $\phi_1$ the largest, and we treat this one first.

I. We claim $\left\| \int_{t_1}^{t_2} e^{-i(t_2-s)H_q}|e^{-isH_q}\phi_1|^2e^{-isH_q}\phi_1 ds \right\|_{L^2_x} \le c (t_2-t_1)$.

We begin with a direct computation
\begin{align}\label{t1t2}
\indentalign \left\| \int_{t_1}^{t_2} e^{-i(t_2-s)H_q}|e^{-isH_q}\phi_1|^2e^{-isH_q}\phi_1 ds \right\|_{L^2_x} \\
&\le (t_2 - t_1) \left\| e^{-i(t_2-s)H_q}|e^{-isH_q}\phi_1|^2e^{-isH_q}\phi_1 \right\|_{L^\infty_{[t_1,t_2]}L^2_x} \\
\nonumber&\le c (t_2-t_1) \left\| |e^{-isH_q}\phi_1|^2e^{-isH_q}\phi_1 \right\|_{L^\infty_{[t_1,t_2]}L^2_x}.
\end{align}
It remains to show that this last norm is bounded by a constant. We use \eqref{eq:prop} to express $e^{-itH_q}$ in terms of $e^{-itH_0}$, recalling the formula here for the reader's convenience:
\[
e^{-itH_q}\phi_1(x) = 
\begin{aligned}[t]
&\big[e^{-itH_0}\phi_1(x) + e^{-itH_0}(\phi_1 * \rho_q)(-x)\big]x_-^0 \\
&+ e^{-itH_0}(\phi_1 * \tau_q)(x)x_+^0 + e^{\frac 12 itq^2} P \phi_1(x).
\end{aligned}
\]
This formula is only valid for functions suported in the negative half-line, but this will not cause serious difficulty and we ignore the problem for now. We first evaluate this expression with $e^{ixv} \psi(x)$ in place of $\phi_1(x)$. Here $\psi(x) = \sech(x-x_0-vt)$, and the other phase factors do not affect the norm. The first term uses the Galilean invariance of $e^{-itH_0}$ directly:
\begin{align*}
e^{-itH_0}e^{ixv}\psi(x) x_-^0 &= e^{-itv^2/2}e^{ixv}e^{-itH_0}\psi(x - vt) x_-^0.
\intertext{For the second and third terms we use in addition the fact that $e^{-itH_0}$ is a convolution operator, and convolution is associative:}
e^{-itH_0}(e^{ixv}\psi * \rho_q)(-x) x_-^0 &= \left[(e^{-itH_0}e^{ixv}\psi) * \rho_q\right](-x) x_-^0 \\
&= e^{-itv^2/2}\left[(e^{ixv}e^{-itH_0}\psi(x-vt) * \rho_q\right](-x) x_-^0 \\
&= e^{-itv^2/2}e^{-ixv}\left[(e^{-itH_0}\psi(x-vt) * (e^{-ixv}\rho_q)\right](-x) x_-^0 \\
e^{-itH_0}(e^{ixv}\psi * \tau_q)(x)x_+^0 &= e^{-itv^2/2} e^{ixv} \left[(e^{-itH_0}\psi(x-vt)) * (e^{-ixv}\tau_q)\right](x) x_+^0.
\end{align*}
The final term we leave as it is, so that we have
\begin{align}\label{phase}
e^{-itH_q}e^{ivx}\psi(x) = & e^{-itv^2/2}e^{ixv}\big[e^{-itH_0}\psi(x - vt) \\
&+ \left[(e^{-itH_0}\psi(x-vt) * (e^{-ixv}\rho_q)\right](-x)\big]x_-^0 \\
\nonumber&+ e^{-itv^2/2} e^{ixv} \left[(e^{-itH_0}\psi(x-vt)) * (e^{-ixv}\tau_q)\right](x) x_+^0 \\
&+ e^{\frac 12 itq^2} P e^{ivx}\psi(x).
\end{align}
Before proceeding to the estimate of (\ref{t1t2}) we introduce the following notation:
\[
f_-(x) = f(x) x^0_-, \qquad f_+(x) = f(-x) x^0_-
\]
so that $f = Rf_+ + f_-$, where $Rf(x) = f(-x)$, and (\ref{phase}) will be applicable to $f_\pm$. We then write
\[
|e^{-isH_q}\phi_1|^2e^{-isH_q}\phi_1 = R(|e^{-isH_q}\phi_{1+}|^2e^{-isH_q}\phi_{1+}) + \cdots + |e^{-isH_q}\phi_{1-}|^2e^{-isH_q}\phi_{1-},
\]
with a total of 8 terms. After applying (\ref{phase}) three times to each of them, we will have $8 \cdot 4^3$ terms. We will estimate these terms in groups. We observe first that the distinction between $\phi_{1+}$ and $\phi_{1-}$ will not play a role, and neither will the presence or absence of $R$. We accordingly write $\psi$ for $\sech(x-x_0-vt_1)_\pm$ and omit $R$ when it appears. In what follows $\sigma_q$ denotes either $e^{-ixv}\rho_q$, $e^{-ixv}\tau_q$, or $\delta_0$, each of which has $L^1$ norm $1$.
\begin{align*}
\indentalign \left\| \left|(e^{-isH_0}\psi(x-vs)) * \sigma_q\right|^2(e^{-isH_0}\psi(x-vs))*\sigma_q \right\|_{L^\infty_{[t_1,t_2]}L^2_x} \\
&\le c \left\|\left|(e^{-isH_0}\psi(x-vs)) * \sigma_q\right|^2(e^{-isH_0}\psi(x-vs))*\sigma_q \right\|_{L^\infty_{[t_1,t_2]}L^2_x}
\end{align*}
Now we pass from $L^2$ to $L^\infty$ for two of the factors, and use Young's inequality:$\|f * g\|_p \le \|f\|_p\|g\|_1$, once for $p=2$ and twice for $p = \infty$, and then use the Gagliardo-Nirenberg-Sobolev inequality which states that the $L^\infty$ norm is controlled by the $H^1$ norm. Because the $H^1$ norm is preserved by $e^{-isH_0}$, we are home free:
\begin{align*}
&\le c \left\|(e^{-isH_0}\psi(x-vs))*\sigma_q \right\|_{L^\infty_{[t_1,t_2]}L^2_x}\left\|(e^{-isH_0}\psi(x-vs))*\sigma_q \right\|_{L^\infty_{[t_1,t_2]}L^\infty_x}^2 \\
&\le c \|\sech(x)\|_{L^\infty_{[t_1,t_2]}L^2_x}\|\sech(x)\|^2_{L^\infty_{[t_1,t_2]}H^1_x} \le c.
\end{align*}
Terms where one or more of the factors of $(e^{-isH_0}\psi(x-vs))$ are replaced by $e^{\frac 12 itq^2} P e^{ivx}\psi(x)$ are treated in the same way. We have
\[
\left\|e^{\frac 12 itq^2} P e^{ivx}\psi(x)\right\|_{L^\infty_{[t_1,t_2]}L^p_x} \le \|\psi(x)\|_{L^\infty_{[t_1,t_2]}L^p_x},
\]
where $p$ is either $2$ or $\infty$.

II. For the other terms the phases will play no role, because we will use Strichartz estimates. The smallness will come more from the smallness of $\phi_2$ than from the brevity of the time interval.
$$\bigg\| \int_{t_1}^{t_2} e^{-i(t_2-s)H_q}|e^{-isH_q}\phi_1|^2e^{-isH_q}\phi_2 ds \bigg\|_{L^2_x} \le c \||e^{-isH_q}\phi_1|^2e^{-isH_q}\phi_2\|_{L^1_{[t_1,t_2]}L^2_x}.$$
We have used the Strichartz estimate with $(\tilde p, \tilde r) = (1,2)$, and combined the resulting terms using \eqref{E:PHolder}. We use H\"older's inequality so as to put ourselves in a position to reapply the Strichartz estimate.
\begin{align*}
&\le c (t_2-t_1)^{\frac 12}\|e^{-isH_q}\phi_1\|^2_{L^4_{[t_1,t_2]}L^\infty_x}\|e^{-isH_q}\phi_2\|_{L^\infty_{[t_1,t_2]}L^2_x} \\
&\le c (t_2-t_1)^{\frac 12} \left(\|\phi_1\|_{L^2_x} + (t_2-t_1)^{\frac 1 4}\|P\phi_1\|_{L^\infty_x}\right)^2\left(\|\phi_2\|_{L^2_x} + \|P\phi_2\|_{L^2_x}\right).
\intertext{Once again we use \eqref{E:PHolder} to combine terms, this time with a penalty in $|q|$.}
&\le c (t_2-t_1)^{\frac 12} \la q\ra \|\phi_1\|_{L^2_x}^2\|\phi_2\|_{L^2_x} \\
&\le c (t_2-t_1)^{\frac 12} \la q \ra^\frac 32 |q|^\frac 32 e^{-v^\eps}.
\end{align*}
Similarly we find that
\begin{align*}
\bigg\| \int_{t_1}^{t_2} e^{-i(t_2-s)H_q}|e^{-isH_q}\phi_2|^2e^{-isH_q}\phi_1 ds \bigg\|_{L^2_x} &\le c (t_2-t_1)^{\frac 12}\la q \ra^2 |q|^3 e^{-2v^\eps} \\
\bigg\| \int_{t_1}^{t_2} e^{-i(t_2-s)H_q}|e^{-isH_q}\phi_2|^2e^{-isH_q}\phi_2 ds \bigg\|_{L^2_x} &\le c (t_2-t_1)^{\frac 12}\la q \ra^\frac 52 |q|^\frac 92 e^{-3v^\eps}.
\end{align*}
\end{proof}

\section{Phase 3}
\label{S:phase3}

Let $t_3=t_2+\eps\log v$.  Label
$$u_\trans(x,t) =e^{-itv^2/2}e^{it_2/2}e^{ixv}\nlso(t-t_2)[t(v)\sech(x)](x-x_0-tv)$$
for the transmitted (right-traveling) component and
$$u_\refl(x,t) = e^{-itv^2/2}e^{it_2/2}e^{-ixv}\nlso(t-t_2)[r(v)\sech(x)](x+x_0+tv)$$
for the reflected (left-traveling) component.  By Appendix A from \cite{HMZ}, for each $k\in \mathbb{N}$ there exists a constant $c_k>0$ and an exponent $\sigma(k)>0$ such that
\begin{equation}
\label{E:trans}
\|u_\trans(x,t)\|_{L_{x<0}^2} \leq \frac{c_k(\log v)^{\sigma(k)}}{v^{k\eps}}, \qquad  |u_\trans(0,t)| \leq \frac{c_k(\log v)^{\sigma(k)}}{v^{k\eps}}
\end{equation}
and
\begin{equation}
\label{E:refl}
\|u_\refl(x,t)\|_{L_{x>0}^2} \leq \frac{c_k(\log v)^{\sigma(k)}}{v^{k\eps}}, \qquad  |u_\refl(0,t)| \leq \frac{c_k(\log v)^{\sigma(k)}}{v^{k\eps}}
\end{equation}
both uniformly on the time interval $[t_2,t_3]$.  

Let us first give an outline of the argument in this section.  We would like to control $\|w\|_{L_{[t_2,t_3]}^\infty L_x^2}$, where $w = u - u_{\tr} - u_{\refl}$.  If, after subdividing the interval $[t_2,t_3]$ into unit-sized intervals, we could argue that $w(t)$ at most doubles (or more accurately, multiplies by a fixed constant independent of $q$ and $v$) over each interval, then we could take $t_3\sim \log v$ and conclude that the size of $\|w(t)\|_{L_x^2}$ would only increment by at most a small positive power of $v$ over $[t_2,t_3]$.  This was the strategy employed in \cite{HMZ}.  The equation for $w$ induced by the equations for $u$, $u_{\tr}$, and $u_{\refl}$ took the form
\begin{equation}
\label{E:w10}
0=i\partial_t w + \tfrac12\partial_x^2 w - q\delta_0 w + F
\end{equation}
where $F$ involved product terms of the form (omitting complex conjugates) $w^3$, $w^2u_{\tr}$, $w^2u_{\refl}$, $w u_{\tr}^2$, $w u_{\tr} u_{\refl}$, and $w u_{\refl}^2$, $u_{\refl}^2u_{\tr}$, $u_{\tr}^2u_{\refl}$, $q\delta_0 u_{\tr}$, $q\delta_0 u_{\refl}$.  The Strichartz estimates were applied to \eqref{E:w10} to deduce a bound on $\|w\|_{L_{[t_a,t_b]}^p L_x^r}$ for all admissible pairs $(p,r)$ over unit-sized time intervals $[t_a,t_b]$.  Although a bound for $(p,r)=(\infty,2)$ would have sufficed, the analysis forced the use of the full range of admissible pairs $(p,r)$ since such norms necessarily arose on the right-hand side of the estimates.

A direct implementation of this strategy does not work for $q<0$.  The difficulty stems from the fact that the initial data $w(t_a)$ for the time interval $[t_a,t_b]$ has a nonzero projection onto the eigenstate $|q|^{1/2}e^{-|q||x|}$.  While the perturbed linear flow of this component is adequately controlled in $L_{[t_a,t_b]}^\infty L_x^2$, it is equal to a positive power of $|q|$ when evaluated in other Strichartz norms.  For example, $\| e^{\frac12itq^2}|q|^{1/2}e^{-|q||x|} \|_{L_{[t_a,t_b]}^6L_x^6} = |q|^{1/3}$.  Thus each iterate over a unit-sized time interval $[t_a,t_b]$ will result in a multiple of $|q|^{1/3}$, and we cannot carry out more than two iterations.

Our remedy is to separate from the above $w = u - u_{\tr} - u_{\refl}$ at $t=t_a$ the projection onto the eigenstate $|q|^{1/2}e^{-|q||x|}$, and evolve this piece by the $\nlsq$ flow.  Specifically, we set
$$u_\bd(t) = \nlsq(t)\big[ P\big(u(t_a)-u_{\tr}(t_a)-u_{\refl}(t_a)\big) \big]$$
and then model $u(t)$ as
$$u(t) = u_{\tr}(t)+u_{\refl}(t)+u_{\bd}(t)+w(t)$$
This equation \emph{redefines} $w(t)$ from that discussed above, and it now has the property that $w(t_a)$ is orthogonal to the eigenstate $|q|^{1/2}e^{-|q||x|}$.  We will, over the interval $[t_a,t_b]$, estimate $w(t)$ in the full family of Strichartz norms but will only put the norms $L_{[t_a,t_b]}^\infty L_x^2$ and $L_{[t_a,t_b]}^\infty \dot H_x^1$ on $u_{\bd}(t)$.  These norms are controlled by \emph{nonlinear} information: the $L^2$ conservation and energy conservation of the $\nlsq$ flow.  The use here of nonlinear information is the key new ingredient; perturbative linear estimates for $u_{\bd}$ are too weak to complete the argument.

The equation for $w(t)$ induced by the equations for $u(t)$, $u_{\bd}(t)$, $u_{\tr}(t)$, and $u_{\refl}(t)$ takes the form
$$0=i\partial_t w + \tfrac12\partial_x^2 w -q\delta_0(x)w + F$$
where $F$ contains terms of the following types (ignoring complex conjugates):
\begin{itemize}
\item (delta terms) $u_{\tr}\delta_0$, $u_{\refl}\delta_0$ 
\item (cubic in $w$) $w^3$ 
\item (quadratic in $w$) $w^2u_{\tr}$, $w^2u_{\refl}$, $w^2u_{\bd}$ 
\item (linear in $w$) $wu_{\tr}^2$, $wu_{\refl}^2$, $wu_{\bd}^2$, $wu_{\refl}u_{\tr}$, $w u_{\bd}u_{\tr}$, $wu_{\bd}u_{\refl}$
\item (interaction) $u_{\bd}u_{\tr}^2$, $u_{\bd}u_{\refl}^2$, $u_{\refl}u_{\bd}^2$, $u_{\refl}u_{\tr}^2$, $u_{\tr}u_{\bd}^2$, $u_{\tr}u_{\refl}^2$ 
\end{itemize}
The integral equation form of $w$ is
$$w(t) = 
\begin{aligned}[t]
& U_q(t)[(1-P)(u(t_a)-u_{\tr}(t_a)-u_{\refl}(t_a))] \\
&-i \int_0^t \Big( e^{\frac12i(t-t')q^2}P F(t') + U_q(t-t')(1-P)F(t') \Big) \, dt'
\end{aligned}
$$
We estimate $w$ in the full family of Strichartz norms, and encounter the most adverse powers of $|q|$ in the $PF$ component of the Duhamel term.  Of the terms making up $F$, the most difficult are the ``interaction'' terms listed above that involve at least one $u_{\bd}$.   Let $A \defeq\|u(t_a) - u_{\tr}(t_a) - u_{\refl}(t_a)\|_{L_x^2}$.  Then we find from the $L^2$ conservation and energy conservation (see Lemma \ref{L:energycon}) of the $\nlsq$ flow, that
$$\|u_{\bd}\|_{L_{[t_a,t_b]}^\infty L_x^2} \leq A, \quad \|\partial_x u_{\bd}\|_{L_{[t_a,t_b]}^\infty L_x^2} \leq c |q|A\, .$$
Using these bounds, we are able to control the interaction terms $u_{\tr}u_{\bd}$ and $u_{\refl}u_{\bd}$ as
$$\|u_{\tr}u_{\bd}\|_{L_{[t_a,t_b]}^\infty L_x^2} + \|u_{\refl}u_{\bd}\|_{L_{[t_a,t_b]}^\infty L_x^2} \leq |q|^{-\frac12}A+|q|^1A^3$$
(see Lemma \ref{L:interaction}).  The estimate of the interaction terms comes with a factor $|q|^{1/2}$, and thus the bound on the interaction terms with at least one copy of $u_{\bd}$ is of size $A+|q|^\frac32A^3$.  We want this to be at most comparable to $A$, so we need $|q|^\frac32A^2 \lesssim 1$.  But the estimates of Phase 2 leave us starting Phase 3 with an error of size $|q|^\frac13 v^{-\frac76(1-\eps)}$.  Let us assume, as a bootstrap assumption, that we are able to maintain a control of size $|q|^\frac13 v^{-\frac76(1-2\eps)}$ on the error.   Then, even in the worst case in which $A\sim |q|^\frac13 v^{-\frac76(1-2\eps)}$, the condition $|q|^\frac32A^2 \lesssim 1$ is implied by $v \gtrsim |q|^{\frac{13}{14}(1+2\eps)}$.  So, if we impose the assumption $v \gtrsim |q|^{\frac{13}{14}(1+2\eps)}$, we can carry out the iterations, with the error at most doubling over each iterate.  If it begins at $|q|^\frac13 v^{-\frac76(1-\eps)}$, then after $\sim \eps\log v$ iterations, it is no more than $|q|^\frac13 v^{-\frac76(1-2\eps)}$, the size of the bootstrap assumption.

Now, to prove the error bound of size $|q|^\frac13 v^{-\frac76(1-\eps)}$ in the Phase 2 analysis required the introduction of the cubic correction refinement; the shorter argument in \cite{HMZ} would only have provided a bound of size $|q|^\frac16v^{-\frac12+\eps}$.   The bound $|q|^\frac16v^{-\frac12+\eps}$ combined with the condition  $|q|^\frac32A^2 \lesssim 1$  gives the requirement $v\gtrsim |q|^{\frac{11}{6}+\eps}$, which is unacceptable since the most interesting phenomena occur for $|q|\sim v$.  This is the reason we \emph{needed} the cubic correction refinement in Phase 2.

We now give the full details of the argument.  We begin with an energy estimate for our differential equation:

\begin{lem}
\label{L:energycon}
If $u$ satisfies
\[ i\D_t u + \frac 1 2 \D_x^2 u - q \delta_0 u + |u|^2u = 0, \]
then
\begin{equation}\label{E:energy}
\|\D_x u(x,t)\|_{L^2_x}  \le 2\|\D_x u(x,0)\|_{L^2_x} + 2|q| \|u(x,0)\|_{L^2_x} + \|u(x,0)\|^3_{L^2_x}.
\end{equation}
\end{lem}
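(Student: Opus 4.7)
The plan is to derive the estimate from conservation of mass and of a suitable energy functional, combined with a one-dimensional Gagliardo–Nirenberg bound to control the $L^\infty$ and $L^4$ terms that appear in the energy.

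First, I would establish the two conservation laws for the perturbed nonlinear flow. Mass conservation, $\|u(\cdot,t)\|_{L^2_x}=\|u(\cdot,0)\|_{L^2_x}$, is immediate from pairing the equation with $\bar u$ and taking imaginary parts; the distributional term $q\delta_0 u$ contributes a real quantity and drops out. For the energy, I would introduce
\[
E(u) \defeq \tfrac{1}{2}\|\D_x u\|_{L^2_x}^2 + q\,|u(0)|^2 - \tfrac{1}{2}\|u\|_{L^4_x}^4,
\]
and show $\frac{d}{dt}E(u(t))=0$ by pairing the equation with $\D_t \bar u$ and taking real parts, integrating by parts on the two half lines $\{x<0\}$ and $\{x>0\}$ separately, and using the domain condition $u'(0^+)-u'(0^-)=2qu(0)$ to combine the boundary contributions with the derivative of $q|u(0,t)|^2$.

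Next, I would use the one-dimensional Gagliardo–Nirenberg inequality
\[
\|f\|_{L^\infty_x}^2 \le 2\,\|f\|_{L^2_x}\|\D_x f\|_{L^2_x},
\]
which follows from $|f(x)|^2 = 2\Re\!\int_{-\infty}^{x}\!\bar f f' \,dy$, to control both bad terms in $E$: $|u(0,t)|^2 \le 2M X(t)$ and $\|u(t)\|_{L^4_x}^4 \le \|u(t)\|_{L^\infty_x}^2 \|u(t)\|_{L^2_x}^2 \le 2M^3 X(t)$, where $M=\|u(\cdot,0)\|_{L^2_x}$ and $X(t)=\|\D_x u(\cdot,t)\|_{L^2_x}$. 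Applied at $t=0$, these also give an upper bound on $E(u(0))$ purely in terms of $X(0)$, $M$, and $q$ (the $q|u(0,0)|^2$ term being negative since $q<0$, it may be dropped from the upper bound).

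Solving $E(u(t))=E(u(0))$ for $\tfrac12 X(t)^2$ and inserting the Gagliardo–Nirenberg bounds yields an inequality of the shape
\[
\tfrac{1}{2}X(t)^2 \le \tfrac{1}{2}X(0)^2 + c_1 M^3 X(0) + \bigl(2|q|M + M^3\bigr)X(t).
\]
Completing the square on the left with $K=2|q|M+M^3$ gives $(X(t)-K)^2$ bounded by a quantity comparable to $(X(0)+K)^2$, from which $X(t) \le 2X(0) + 2|q|M + M^3$ (up to harmless numerical constants absorbed into the three summands) follows. The main technical care needed is the justification of the integration by parts across $x=0$ when differentiating the energy; once that is handled by the domain condition, the rest is algebra and Gagliardo–Nirenberg. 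Everything else is straightforward.
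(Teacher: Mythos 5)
Your proposal is correct and matches the paper's argument in all essentials: you use mass conservation, conservation of the perturbed energy functional $E(u)=\tfrac12\|\D_x u\|_{L^2}^2+q|u(0)|^2-\tfrac12\|u\|_{L^4}^4$, the one-dimensional Gagliardo--Nirenberg inequality to control $|u(0,t)|^2$ and $\|u(t)\|_{L^4}^4$ by $\|u\|_{L^2}$ and $\|\D_x u\|_{L^2}$, and then absorb the linear-in-$X(t)$ terms (the paper uses Young's inequality $ab\le\tfrac12 a^2+\tfrac12 b^2$ rather than completing the square, but these are interchangeable). The extraneous $c_1 M^3 X(0)$ term in your intermediate display (the $-\tfrac12\|u(0)\|_{L^4_x}^4$ contribution is nonpositive and should simply be dropped) and the slightly different numerical constants you end up with are both harmless, since the lemma is only used downstream for the qualitative structure $\|\D_x u(t)\|\lesssim \|\D_x u(0)\|+|q|\,\|u(0)\|_{L^2}+\|u(0)\|_{L^2}^3$.
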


\begin{proof}
Multiplying by $\D_t\overline{u}$, intergrating in space, and taking the real part, we see that
\begin{align*}
\textrm{Re} \left(\frac 1 2 \int \D_x^2 u \D_t \overline{u} dx - q u(0,t)\D_t\overline{u}(0,t) + \int |u|^2u \D_t\overline{u} dx\right) &= 0.
\intertext{Here we multiply by 4 and integrate by parts in the first term:}
-\D_t\int |\D_x u|^2 dx - 2q\D_t|u(0,t)|^2 + \D_t\int |u|^4 dx &= 0
\end{align*}
Integrating from $0$ to $t$ and solving for $\|\D_x u(t)\|^2_{L^2_x}$, we find that
$$
\|\D_x u(x,t)\|^2_{L^2_x} = 
\begin{aligned}[t]
&\|\D_x u(x,0)\|^2_{L^2_x} + 2 q |u(0,0)|^2 - 2q |u(0,t)|^2 \\
&+ \|u(x,t)\|^4_{L^4_x} - \|u(x,0)\|^4_{L^4_x}.
\end{aligned}
$$
Dropping the terms with a favorable sign from the right hand side, we see that
$$\|\D_x u(x,t)\|^2_{L^2_x} \le \|\D_x u(x,0)\|^2_{L^2_x} + 2|q| |u(0,t)|^2 + \|u(x,t)\|^4_{L^4_x}$$
Next, using $\|u\|^2_{L^4_x} \le \|u\|_{L^2_x}\|u\|_{L^\infty_x}$, together with $\|u\|^2_{L^\infty_x} \le \|u\|_{L^2_x}\|\D_xu\|_{L^2_x}$, we have
$$\|\D_x u(x,t)\|^2_{L^2_x} \le 
\begin{aligned}[t]
&\|\D_x u(x,0)\|^2_{L^2_x} + 2|q| \|u(x,t)\|_{L^2_x}\|\D_xu(x,t)\|_{L^2_x} \\
&+ \|u(x,t)\|^3_{L^2_x}\|\D_x u(x,t)\|_{L^2_x}
\end{aligned}
$$
Here we use $\|u(x,t)\|_{L^2_x} = \|u(x,0)\|_{L^2_x}$:
$$\|\D_x u(x,t)\|^2_{L^2_x}  \le \|\D_x u(x,0)\|^2_{L^2_x} + \left(2|q| \|u(x,0)\|_{L^2_x} + \|u(x,0)\|^3_{L^2_x} \right)\|\D_xu(x,t)\|_{L^2_x}$$
Using $ab \le \frac 12 a^2 + \frac 1 2 b^2$ to solve for $\|\D_x u(x,t)\|$, we obtain
$$\|\D_x u(x,t)\|^2_{L^2_x}  \le 2\|\D_x u(x,0)\|^2_{L^2_x} + \left(2|q| \|u(x,0)\|_{L^2_x} + \|u(x,0)\|^3_{L^2_x} \right)^2.$$
This implies the desired result.
\end{proof}

We now give an approximation lemma analogous to that in Phase 1, but with the error divided into two parts.

\begin{lem}\label{L:approx6}
Suppose $t_a < t_b$ and $t_b - t_a \le c_1$. Let $u_\bd(t)$ be the flow of $\nlsq$ with initial condition
\begin{equation}\label{ubd}
u_{\bd}(t_a) = P[u(t_a) - u_\trans(t_a) - u_\refl(t_a)],
\end{equation}
and put 
\begin{equation}
\label{E:wdef}
w = u - u_\trans - u_\refl - u_\bd \,.
\end{equation}
Suppose $|q|^{\frac 12} \|u_{\bd}(t_a)\|_{L^2_x} \le 1$ and, for some $k \in \NN$,
\[\|w(t_a)\|_{L^2_x} + c(k)\la q\ra^2\frac{(\log v)^{\sigma(k)}}{v^{k\eps}} + \|u_\bd(t_a)\|_{L^2_x} + |q| \la q \ra^\frac 12 \|u_{\bd}(t_a)\|^3_{L^2_x} \le c_2 \, .\]
Then 
\[
\|w\|_{L^\infty_{[t_a,t_b]}L^2_x} \le c_3 
\begin{aligned}[t]
\bigg( &\|w(t_a)\|_{L^2_x} + c(k)\la q\ra^2\frac{(\log v)^{\sigma(k)}}{v^{k\eps}} \\
&+ \|u_\bd(t_a)\|_{L^2_x} + |q| \la q \ra^\frac 12 \|u_{\bd}(t_a)\|^3_{L^2_x}\bigg).\end{aligned}
\]
The constants $c_1$, $c_2$, and $c_3$ are independent of $q$, $v$, and $\eps$.
\end{lem}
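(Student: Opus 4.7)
The plan is to derive a PDE for $w$, recast it in Duhamel form using the splitting $e^{-i(t-t_a)H_q} = U_q(t-t_a) + e^{\frac 12 i(t-t_a)q^2}P$, and run a Strichartz bootstrap in the Phase-1-style space $X \defeq L^\infty_{[t_a,t_b]}L^2_x \cap L^4_{[t_a,t_b]}L^\infty_x$. Subtracting the four equations satisfied by $u$, $u_{\tr}$, $u_{\refl}$, and $u_{\bd}$ and using $u = u_{\tr}+u_{\refl}+u_{\bd}+w$ gives
\[
i \D_t w + \tfrac 1 2 \D_x^2 w - q \delta_0 w = -q\delta_0(u_{\tr}+u_{\refl}) + |u_{\tr}|^2 u_{\tr} + |u_{\refl}|^2 u_{\refl} + |u_{\bd}|^2 u_{\bd} - |u|^2 u.
\]
Substituting $u = u_{\tr}+u_{\refl}+u_{\bd}+w$ in the cubic contribution produces exactly the five groups of forcing terms (delta, cubic in $w$, quadratic in $w$, linear in $w$, interaction) enumerated in the outline preceding the lemma. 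Because $w(t_a) = (1-P)(u(t_a)-u_{\tr}(t_a)-u_{\refl}(t_a))$ by construction, no $\|Pw(t_a)\|_{L^r}$ term appears on the right-hand side when Proposition \ref{p:Str} is applied to the Duhamel formula.

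The first four groups of forcing terms are handled in direct analogy with Lemma \ref{L:approx1}. The cubic, quadratic, and linear-in-$w$ terms are estimated by H\"older in space-time to yield powers $\|w\|_X^k$; the linear-in-$w$ contribution carries a $(t_b-t_a)^{1/2}$ factor that can be absorbed into the left-hand side once $c_1$ is sufficiently small. For the delta source, the special $g(t)\delta_0(x)$ bound of Proposition \ref{p:Str}, together with the pointwise decay \eqref{E:trans}--\eqref{E:refl} and the H\"older bound \eqref{E:PHolder} on $P$, contributes at most $c(k)\la q\ra^2(\log v)^{\sigma(k)}v^{-k\eps}$ to $\|w\|_X$. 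The interaction terms with no $u_{\bd}$ factor, namely $u_{\tr}^2 u_{\refl}$ and $u_{\tr}u_{\refl}^2$, are likewise controlled by the same quantity: in each of these products at least one factor is, by \eqref{E:trans}--\eqref{E:refl}, negligible on the support of the others.

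The crux of the argument is the estimate for the interaction terms involving at least one copy of $u_{\bd}$, namely $u_{\bd}u_{\tr}^2$, $u_{\bd}u_{\refl}^2$, $u_{\tr}u_{\bd}^2$, $u_{\refl}u_{\bd}^2$, and $u_{\tr}u_{\refl}u_{\bd}$. A direct Strichartz bound on $u_{\bd}$ in $L^6_tL^6_x$ would incur a factor of $|q|^{1/3}$ from the eigenstate, which, iterated over the $\sim \eps\log v$ unit-sized subintervals needed to cover $[t_2,t_3]$, would be fatal. Instead, one estimates $u_{\bd}$ only in norms controlled by the conservation laws of the $\nlsq$ flow: mass conservation yields $\|u_{\bd}\|_{L^\infty_{[t_a,t_b]}L^2_x} \le \|u_{\bd}(t_a)\|_{L^2_x}$, and Lemma \ref{L:energycon} together with $\|\D_x u_{\bd}(t_a)\|_{L^2_x} \lesssim |q|\|u_{\bd}(t_a)\|_{L^2_x}$ (which holds because $u_{\bd}(t_a)$ lives on the eigenstate $|q|^{1/2}e^{q|x|}$) gives $\|\D_x u_{\bd}\|_{L^\infty_{[t_a,t_b]}L^2_x} \lesssim |q|\|u_{\bd}(t_a)\|_{L^2_x} + \|u_{\bd}(t_a)\|_{L^2_x}^3$. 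These bounds, combined via an interaction lemma exploiting the spatial separation between $u_{\tr}, u_{\refl}$ and the eigenstate, yield the quadratic product estimates $\|u_{\tr}u_{\bd}\|_{L^\infty_{[t_a,t_b]}L^2_x} + \|u_{\refl}u_{\bd}\|_{L^\infty_{[t_a,t_b]}L^2_x} \lesssim |q|^{-1/2}\|u_{\bd}(t_a)\|_{L^2_x} + |q|\|u_{\bd}(t_a)\|_{L^2_x}^3$ announced in the outline. Multiplying by the $|q|^{1/2}$ cost of projecting a cubic source onto the eigenstate via $P$ (using $\|P F\|_{L^2}\lesssim |q|^{1/2}\|F\|_{L^1}$) yields the net contribution $\|u_{\bd}(t_a)\|_{L^2_x} + |q|\la q\ra^{1/2}\|u_{\bd}(t_a)\|_{L^2_x}^3$, matching the last two terms in the right-hand side of the lemma.

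Assembling the bounds gives a schematic inequality $x \le c(\mathcal{A} + Bx + Cx^2 + Dx^3)$ for $x = \|w\|_X$, with $\mathcal{A}$ equal to the right-hand side of the lemma and $B \le 1/2$ after choosing $c_1$ small. Under the smallness hypotheses on $\mathcal{A}$ and on $|q|^{1/2}\|u_{\bd}(t_a)\|_{L^2_x}$, a continuity-in-$t_b$ bootstrap identical to the one concluding the proof of Lemma \ref{L:approx1} forces $x \le 2c\mathcal{A}$, which proves the lemma. The main obstacle throughout, as anticipated in the outline, is the honest handling of the $u_{\bd}$-bearing interaction terms; without the nonlinear input from Lemma \ref{L:energycon}, purely linear perturbative estimates would insert an unusable power of $|q|$ at every iterate.
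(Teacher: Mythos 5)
Your proposal follows essentially the same approach as the paper: same decomposition $w = u - u_{\tr} - u_{\refl} - u_{\bd}$ with $u_{\bd}$ the $\nlsq$ flow of the eigenstate projection, same observation that $Pw(t_a)=0$, same Strichartz space $X = L^\infty_{[t_a,t_b]}L^2_x \cap L^4_{[t_a,t_b]}L^\infty_x$, same term-by-term treatment of the forcing grouped exactly as the paper's outline lists, the same two key inputs (the nonlinear energy estimate of Lemma \ref{L:energycon} and the overlap estimate of Lemma \ref{L:interaction}), and the same continuity-in-$t_b$ bootstrap to close. The only deviation is a cosmetic one in the $P$-penalty step — you invoke the $L^1\to L^2$ instance of \eqref{E:PHolder} while the paper uses the $L^2\to L^\infty$ instance when $r=\infty$ — but both give the correct $|q|^{1/2}$ factor, so the argument is sound.
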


Once again we apply our lemma before proving it. Suppose now that for some large $c$ and $k$, where $c$ is absolute and $k$ depends on $\eps$, we have $|q|\la q\ra^\frac 12 \|w(t_2)\|^2\le c$ and $c(k)\la q\ra^2\frac{(\log v)^{\sigma(k)}}{v^{k\eps}}\le c \|w(t_2)\|_{L^2_x}$. These additional assumptions cause the conclusion of the lemma to become $\|w\|_{L^\infty_{[t_a,t_b]}L^2_x} \le c\|w(t_a)\|_{L^2_x}$, and they will follow from assuming $|q|^{10}\la q \ra^3 v^{-14(1-2\eps)} \le c$ and $\la q \ra v^{-n} \le c_{\eps,n}$, where $c$ is an absolute constant and $c_{\eps,n}$ is a small constant, dependent only on $\eps$ and $n$, which goes to zero when $\eps \to 0$ or when $n \to \infty$. Let $m$ be the integer such that $mc_1< \eps \log v < (m+1)c_1$.  We apply Lemma \ref{L:approx6} successively on the intervals $[t_2,t_2+c_1], \ldots, [t_2+(m-1)c_1,t_2+mc_1]$ as follows. On $[t_2,t_2+c_1]$, we obtain
\[ \|w(\cdot,t)\|_{L_{[t_2,t_2+1]}^\infty L_x^2} \leq c_3\|w(\cdot,t_2)\|_{L_x^2}. \]
Applying Lemma \ref{L:approx6} on $[t_2+c_1,t_2+2c_1]$ and combining with the above estimate,
\[\|w(\cdot,t)\|_{L_{[t_2+1,t_2+2]}^\infty L_x^2} \leq c_3^2\|w(\cdot,t_2)\|_{L_x^2}.\]
Continuing up to the $k$-th step and then collecting all of the above estimates,
\[\|w(\cdot,t)\|_{L_{[t_2,t_3]}^\infty L_x^2} \leq c_3^{m+1}\|w(\cdot,t_2)\|_{L_x^2} \le c v^\eps \|w(\cdot,t_2)\|_{L_x^2} \le c\left(v^{-1+2\eps} +  |q|^{\frac 13} v^{-\frac 76 (1-2\eps)} \right). \]

Notice that $u_{\bd}(t)$ and $w(t)$ are \emph{redefined} by \eqref{ubd} and \eqref{E:wdef} as we move from one interval $I_j\defeq [t_2+(j-1)c_1,t_2+jc_1]$ to the next interval $I_{j+1}\defeq [t_2+jc_1,t_2+(j+1)c_1]$ in the above iteration argument.  That is, on $I_j$, $u_{\bd}(t)$ is the $\nlsq$ flow of an initial condition at $t=t_2+(j-1)c_1$, and on $I_{j+1}$, $u_{\bd}(t)$ is the $\nlsq$ flow of an initial condition at $t=t_2+jc_1$ that does not necessarily match the value of the previous flow at that point.  In other words, at the interface of these two intervals,
$$\lim_{t\nearrow t_2+jc_1} u_\bd(t) \neq \lim_{t\searrow t_2+jc_1} u_\bd(t)$$

It remains to prove Lemma \ref{L:approx6}. On the way we will need to estimate the overlap of $u_\bd$ with $u_\trans$ and $u_\refl$.

\begin{lem}
\label{L:interaction}
Let $t_a < t_b$, let $u_\trans$ and $u_\refl$ satisfy \eqref{E:trans} and \eqref{E:refl}, and let $u_\bd$ be the flow under $\nlsq$ of an initial condition proportional to $e^{q|x|}$. Then
\begin{equation}\label{E:bdtr}
\begin{aligned}
\indentalign \|u_\trans u_{\bd}\|_{L^\infty_{[t_a,t_b]}L^2_x} + \|u_{\refl}u_{\bd}\|_{L^\infty_{[t_a,t_b]}L^2_x} \\
&\le c \left(|q|^{-\frac 12}\|u_\bd(t_a)\|_{L^2_x} + |q|^\frac 12 \la q \ra^\frac 12 \|u_{\bd}(t_a)\|^3_{L^2_x} \right).
\end{aligned}
\end{equation}
\end{lem}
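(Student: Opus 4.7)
By the symmetry $x\mapsto -x$ under which $H_q$ is invariant and $u_\trans,u_\refl$ are interchanged, it suffices to estimate $\|u_\trans u_\bd\|_{L^\infty_{[t_a,t_b]}L^2_x}$. The plan is to split $u_\bd$ via Duhamel into a linear piece (an explicit multiple of the bound state, for which the estimate is a direct computation) and a nonlinear correction (handled by conservation laws).

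Write $u_\bd(t_a)=\alpha e^{q|x|}$; since $Pu_\bd(t_a)=u_\bd(t_a)$ and $\|e^{q|x|}\|_{L^2_x}^2=1/|q|$, one has $|\alpha|^2=|q|\|u_\bd(t_a)\|_{L^2_x}^2$. The linear $H_q$-flow of a bound state is a phase, so Duhamel gives $u_\bd(t)=L(t)+N(t)$ with
\[
L(t,x)=e^{i(t-t_a)q^2/2}\alpha e^{q|x|},\qquad N(t)=i\int_{t_a}^t e^{-i(t-s)H_q}\bigl(|u_\bd|^2u_\bd\bigr)(s)\,ds.
\]

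For the linear piece, I compute
\[
\|Lu_\trans\|_{L^2_x}^2=|\alpha|^2\int_\RR e^{2q|x|}|u_\trans(x,t)|^2\,dx
\]
and split at $x=0$. On $(-\infty,0]$ use $e^{2q|x|}\le 1$ together with the bound $\|u_\trans\|_{L^2_{x<0}}\le c_k(\log v)^{\sigma(k)}/v^{k\eps}$ from \eqref{E:trans}. On $[0,\infty)$ use the decay $e^{2q|x|}=e^{-2|q|x}$ combined with the Taylor bound
\[
|u_\trans(x,t)|^2\le|u_\trans(0,t)|^2+c|x|,
\]
which is valid because $|u_\trans(x,t)|^2=|\psi(x-x_0-tv,t-t_2)|^2$ with $\psi$ the free-NLS evolution of the Schwartz datum $t(v)\sech$, whose $C^1_y$ norm is bounded uniformly on $[t_2,t_3]$ by propagation of $H^s$-regularity for smooth NLS data. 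Since $\int_0^\infty e^{-2|q|x}(1+|x|)\,dx\le c/|q|^2$, multiplying by $|\alpha|^2=|q|\|u_\bd(t_a)\|_{L^2_x}^2$ and choosing $k$ large enough (depending on $\eps$) to absorb the rapidly-decaying contributions gives
\[
\|Lu_\trans\|_{L^2_x}\le c|q|^{-1/2}\|u_\bd(t_a)\|_{L^2_x}.
\]

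For the nonlinear piece, differentiating $\alpha e^{q|x|}$ pointwise away from $0$ gives $\|\partial_xu_\bd(t_a)\|_{L^2_x}=|q|\|u_\bd(t_a)\|_{L^2_x}$, so Lemma \ref{L:energycon} combined with the standing hypothesis $|q|^{1/2}\|u_\bd(t_a)\|_{L^2_x}\le 1$ yields $\|\partial_xu_\bd(t)\|_{L^2_x}\le c|q|\|u_\bd(t_a)\|_{L^2_x}$. Gagliardo--Nirenberg then produces $\|u_\bd\|_{L^\infty_{[t_a,t_b]}L^\infty_x}\le c|q|^{1/2}\|u_\bd(t_a)\|_{L^2_x}$, and from mass conservation plus the $L^2$-unitarity of $e^{-isH_q}$,
\[
\|N\|_{L^\infty_{[t_a,t_b]}L^2_x}\le(t_b-t_a)\,\||u_\bd|^2u_\bd\|_{L^\infty_tL^2_x}\le c|q|\|u_\bd(t_a)\|_{L^2_x}^3.
\]
Thus $\|Nu_\trans\|_{L^2_x}\le\|N\|_{L^2_x}\|u_\trans\|_{L^\infty_x}\le c|q|\|u_\bd(t_a)\|_{L^2_x}^3$. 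Since $|q|\le|q|^{1/2}\la q\ra^{1/2}$ when $|q|\ge 1$, adding the $L$ and $N$ contributions together with the symmetric bound for $u_\refl$ gives \eqref{E:bdtr}.

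The main obstacle will be extracting the gain $|q|^{-1/2}$, rather than merely $\|u_\bd(t_a)\|_{L^2_x}$, from the linear piece: the bound state $L$ is localized on scale $1/|q|$ about the origin, and the naive bound $\|u_\trans\|_{L^\infty_x}\lesssim 1$ produces only $\|Lu_\trans\|_{L^2_x}\lesssim\|u_\bd(t_a)\|_{L^2_x}$. The additional $|q|^{-1/2}$ must come from simultaneously using the exponential decay of $L$ away from $0$ and the near-flatness of $|u_\trans|^2$ over the scale $1/|q|$, which is exactly what the Taylor inequality above encodes via the pointwise smallness of $u_\trans(0,t)$ from \eqref{E:trans}.
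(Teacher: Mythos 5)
Your decomposition matches the paper's: you Duhamel-split $u_\bd$ into a linear bound-state part $L$ and a nonlinear correction $N$ and treat them separately. The two proofs differ in how each piece is estimated.

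For the nonlinear piece $N$ you use the elementary chain ``unitarity of $e^{-isH_q}$ on $L^2$, then bound the integrand in $L^\infty_t L^2_x$,'' with $\|u_\bd\|_{L^\infty_x}$ controlled by Gagliardo--Nirenberg and the energy Lemma~\ref{L:energycon}. The paper instead routes through the Strichartz estimate ($L^{4/3}_tL^1_x$ and $PL^1_tL^2_x$ on the inhomogeneity). Both yield bounds consistent with the lemma: yours is $c|q|\|u_\bd(t_a)\|^3_{L^2} \le c|q|^{1/2}\la q\ra^{1/2}\|u_\bd(t_a)\|^3_{L^2}$, the paper's is $c|q|^{1/2}\la q\ra^{1/2}\|u_\bd(t_a)\|^3_{L^2}$. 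Your route is shorter and uses less machinery.

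The linear piece $L$ is where your argument adds genuine substance. The paper's written computation, $\int|u_\trans|^2e^{2q|x|}\,dx \le c\|u_\trans\|^2_{L^\infty_x}|q|^{-1}\le c|q|^{-1}$, gives after reinstating the prefactor $|q|^{1/2}\|u_\bd(t_a)\|_{L^2}$ only $c\|u_\bd(t_a)\|_{L^2}$, i.e. it does \emph{not} produce the $|q|^{-1/2}$ gain claimed in the lemma's first term; the paper acknowledges in passing that ``a better estimate is possible using the fact that $u_\bd$ and $u_\trans$ are concentrated in different parts of the real line'' but does not write it out. That gain is not cosmetic: in the tenth estimate of Lemma~\ref{L:approx6} it is needed to cancel the $|q|^{1/2}$ coming from the projection $P$, and without it the iteration over $\sim\eps\log v$ unit intervals blows up in $q$. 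Your Taylor-type argument --- $|u_\trans(x,t)|^2 \le |u_\trans(0,t)|^2 + c|x|$ for $x>0$, together with $\int_0^\infty e^{-2|q|x}(1+|x|)\,dx\le c|q|^{-2}$ and the smallness of $|u_\trans(0,t)|$ and $\|u_\trans\|_{L^2_{x<0}}$ from \eqref{E:trans} --- supplies precisely the omitted refinement and does obtain $\int|u_\trans|^2e^{2q|x|}\,dx\le c|q|^{-2}$, hence $c|q|^{-1/2}\|u_\bd(t_a)\|_{L^2}$.

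Two caveats you should tighten. First, the Taylor step needs a uniform $C^1_y$ bound for $\psi=\nlso(\cdot)[t(v)\sech]$ over $[0,\eps\log v]$, i.e. a uniform-in-time $H^2$-type bound for the free cubic NLS flow of $\alpha\sech$; this is true (1D cubic NLS is completely integrable and has conserved quantities at every Sobolev level, or one can read it off the explicit inverse-scattering description used in Appendix A of \cite{HMZ}), but ``propagation of $H^s$-regularity'' does not by itself give a $v$-independent constant over a growing time window and should be replaced by one of these precise references. Second, absorbing the $\|u_\trans\|_{L^2_{x<0}}$ and $|u_\trans(0,t)|$ contributions into $|q|^{-2}$ requires choosing $k$ large (depending on $\eps$) and using a bound of the form $|q|\lesssim v^{O(1)}$; this is available under the theorem's hypotheses but is implicit in the lemma as stated, so it deserves a sentence.
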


\begin{proof}
We prove the inequality only for $\|u_\trans u_{\bd}\|_{L^\infty_{[t_a,t_b]}L^2_x}$, the argument for $\|u_{\refl}u_{\bd}\|_{L^\infty_{[t_a,t_b]}L^2_x}$ being identical. We will use the following formula for $u_{\bd}$:
\[
u_{\bd}(x,t) = c_u e^{\frac i 2 t v^2} |q|^{\frac 1 2} e^{q|x|} + \int_{t_a}^t e^{-i(t-s)H_q} |u_{\bd}(x,s)|^2 u_{\bd}(x,s) ds,
\]
where $c_u$ is the overlap of $u_\bd$ with the linear eigenstate, and is a constant bounded by the $L^2_x$ norm of $u_\bd$. We then have
\[
\|u_\trans u_{\bd}\|_{L^\infty_{[t_a,t_b]}L^2_x} \le 
\begin{aligned}[t]
& c_u|q|^{\frac 1 2}\left\|u_\trans e^{q|x|}\right\|_{L^\infty_{[t_a,t_b]}L^2_x} \\
&+ \left\|u_\trans \int_{t_2}^t e^{-i(t-s)H_q} |u_{\bd}(s)|^2 u_{\bd}(s) ds \right\|_{L^\infty_{[t_a,t_b]}L^2_x}
\end{aligned}
\]

For the first term it is enough that the linear eigenstate has $L^1$ norm proportional to $|q|^{-\frac 12}$. We remark that a better estimate is possible using the fact that $u_\bd$ and $u_\trans$ are concentrated in different parts of the real line.
\begin{align*}
\int |u_\trans(x,t)|^2e^{2q|x|} dx &\le c\|u_\trans(x,t)\|^2_{L^\infty_x} |q|^{-1} \le c |q|^{-1}
\end{align*}
Putting back in the factor of $\|u_\bd(t_a)\|_{L^2_x}|q|^\frac 12$, we obtain the first term of the desired estimate.

Now we treat the integral term.
\begin{align*}
\indentalign \left\|u_\trans\int_{t_2}^t e^{-i(t-s)H_q} |u_{\bd}(s)|^2 u_{\bd}(s) ds \right\|_{L^\infty_{[t_a,t_b]}L^2_x} \\
&\le c\left\|\int_{t_2}^t e^{-i(t-s)H_q} |u_{\bd}(s)|^2 u_{\bd}(s) ds \right\|_{L^\infty_{[t_a,t_b]}L^2_x},
\intertext{where we have used the explicit formula for $u_\trans$ to take its supremum. We now use a Strichartz estimate:}
&\le c \left(\left\||u_{\bd}|^3\right\|_{L^{4/3}_{[t_a,t_b]}L^1_x} +  \left\|P|u_{\bd}|^3\right\|_{L^1_{[t_a,t_b]}L^2_x}\right) \\
&\le c |q|^{\frac 12} \|u_{\bd}^3\|_{L^\infty_{[t_a,t_b]}L^1_x} \\
&\le c |q|^{\frac 12} \|u_{\bd}\|^2_{L^\infty_{[t_a,t_b]}L^2_x}\|u_{\bd}\|_{L^\infty_{[t_a,t_b]}L^\infty_x}.
\intertext{Here we use $\|f\|^2_{L^\infty} \le \|f\|_{L^2_x}\|\D f\|_{L^2_x}$.}
&\le c |q|^{\frac 12} \|u_{\bd}\|^{\frac 52}_{L^\infty_{[t_a,t_b]}L^2_x}\|\D_x u_{\bd}\|^{\frac 12}_{L^\infty_{[t_a,t_b]}L^2_x}
\end{align*}
Applying \eqref{E:energy} gives
\[\le c |q|^{\frac 12} \|u_{\bd}\|^{\frac 52}_{L^\infty_{[t_a,t_b]}L^2_x} \left(\|\D_x u_{\bd}(x,t_a)\|^{\frac 12}_{L^2_x} + |q|^\frac 12 \|u_{\bd}(x,t_a)\|^\frac 12_{L^2_x} + \|u_{\bd}(x,t_a)\|^\frac 32_{L^2_x}\right).\]
But the $L^2_x$ norm of $u_{\bd}$ is conserved, so that $\|u_{\bd}\|_{L^\infty_{[t_a,t_b]}L^2_x} = \|u_\bd(t_a)\|_{L^2_x}$. Moreover, because $u_\bd(t_a)$ is proportional to $e^{q|x|}$, we have $|\D_x u_\bd(x,t_a)| = |q| |u_\bd(x,t_a)|$, giving $\|\D_x u_\bd(x,t_a)\|_{L^2_x} = |q|\|u_\bd(t_a)\|_{L^2_x}$. This allows us to conclude
\[ = c |q|^{\frac 12} \|u_{\bd}(t_a)\|^{\frac 52}_{L^2_x} \left(|q|^{\frac 12}\|u_\bd(t_a)\|^{\frac 12}_{L^2_x} + |q|^\frac 12 \|u_{\bd}(x,t_a)\|^\frac 12_{L^2_x} + \|u_{\bd}(x,t_a)\|^\frac 32_{L^2_x}\right)\]
Combining terms, we obtain
\[ \le c |q|^\frac 12 \la q\ra^\frac 12 \|u_{\bd}(t_a)\|^3_{L^2_x}, \]
giving us the second term of the conclusion.
\end{proof}

Now we proceed to the proof of Lemma \ref{L:approx6}.

\begin{proof}
We begin by computing the differential equation solved by $w$.
\begin{align*}
0 &= i \D_t u + \frac 1 2 \D_x^2u - q\delta_0(x) u + u|u|^2 \\
&= \begin{aligned}[t]
&i \D_t w + \frac 1 2 \D_x^2w - q\delta_0(x) u_{\trans} - q\delta_0(x) u_{\refl} \\
&+ (u_{\trans} + u_{\refl} + u_{\bd} + w)|u_{\trans} + u_{\refl} + u_{\bd} + w|^2\\
& - |u_{\trans}|^2u_{\trans} - |u_{\refl}|^2u_{\refl} - |u_{\bd}|^2u_{\bd}.
\end{aligned}
\end{align*}
This can also be written as the following integral equation:
\begin{equation}
\label{E:intw}
\begin{aligned}
w(t) = e^{-i(t-t_2)H_q}w(t_2)+ \int_{t_2}^t &e^{-i(t-s)H_q} \big[ q\delta_0(x) u_{\trans}(s) + q\delta_0(x) u_{\refl} \\
&- |u_{\trans}(s)|^2u_{\refl}(s) - \cdots - |w(s)|^2w(s)\big] ds. 
\end{aligned}
\end{equation}

For this estimate we will use $\|w\|_X = \|w\|_{L^\infty_{[t_a,t_b]}L^2_x} + \|w\|_{L^4_{[t_a,t_b]}L^\infty_x}$, and accordingly estimate the $L^p_{[t_a,t_b]}L^r_x$ norms of each of the terms on the right hand side of \eqref{E:intw} for $(p,r)$ equal to either $(\infty,2)$ or $(4,\infty)$. We choose these norms in order to be able to apply the Strichartz estimate.

First we treat the term arising from the initial condition. Here we use the fact that by construction, our initial condition satisfies $Pw_2(t_a) = 0$.
\begin{align*}
\left\| e^{-i(t-t_a)H_q}w(t_a) \right \|_{L^p_{[t_a,t_b]}L^r_x} \le c \|w(t_a)\|_{L^2_x}
\end{align*}

Second we treat the delta terms, for which we obtain the following:
\begin{align*}
\left\| \int_{t_a}^t e^{-i(t-s)H_q} q\delta_0(x) u_{\trans}(s)ds \right\|_{L^p_{[t_a,t_b]}L^r_x} &\le c \la q\ra^2\|u_{\trans}(0,t)\|_{L^\infty_{[t_a,t_b]}},\\
\left\| \int_{t_a}^t e^{-i(t-s)H_q} q\delta_0(x) u_{\refl}(s)ds \right\|_{L^p_{[t_a,t_b]}L^r_x} &\le c \la q\ra^2\|u_{\refl}(0,t)\|_{L^\infty_{[t_a,t_b]}}.
\end{align*}
We have used here the bound on $t_b - t_a$ to pass from the $L^{4/3}_{[t_a,t_b]}$ norm to the $L^\infty_{[t_a,t_b]}$ norm, and we have combined all the terms under the one with the least favorable power of $\la q \ra$. These last expressions are estimated using \eqref{E:trans} and \eqref{E:refl} respectively, to obtain, in both cases
$$\le c \la q\ra^2\frac{c(k)(\log v)^{\sigma(k)}}{v^{k\eps}}.$$

Third we the treat term which is cubic in $w$. For this we have
\begin{align*}
\left\| \int_{t_a}^t e^{-i(t-s)H_q} |w(s)|^2w(s) ds \right\|_{L^p_{[t_a,t_b]}L^r_x} &\le c \left(\|w^3\|_{L^{6/5}_{[t_a,t_b]}L^{6/5}_x} + \|Pw^3\|_{L^1_{[t_a,t_b]}L^r_x} \right) \\
&\le c \left(\|w\|^3_{L^{18/5}_{[t_a,t_b]}L^{18/5}_x} + \|w\|^3_{L^3_{[t_a,t_b]}L^{3r}_x} \right).
\intertext{At this stage we use $\|w\|_{L^{18/5}_x} \le \|w\|^{5/9}_{L^2_x}\|w\|^{4/9}_{L^\infty_x}$, and an analogous inequality for $\|w\|_{L^{3r}_x}$. We then use the boundedness of $t_b - t_a$ to pass to the $L^\infty_{[t_a,t_b]}L^2_x$ and $L^4_{[t_a,t_b]}L^\infty_x$ norms, giving us}
&\le c \|w\|^3_X.
\end{align*}

Fourth we treat terms of the form $w^2u_{\bd}$, as usual ignoring complex conjugates. This time we use $\tilde p = 1$, $\tilde r = 2$ on the right hand side of the Strichartz estimate.
\begin{align*}
\indentalign \left\| \int_{t_a}^t e^{-i(t-s)H_q} w(s)^2 u_{\bd}(s) ds \right\|_{L^p_{[t_a,t_b]}L^r_x} \\
&\le c \left(\|w^2 u_{\bd}\|_{L^1_{[t_a,t_b]}L^2_x} + \|Pw^2u_{\bd}\|_{L^1_{[t_a,t_b]}L^r_x} \right) \\
&\le c \left(\|w^2 u_{\bd}\|_{L^1_{[t_a,t_b]}L^2_x} + |q|^{\frac 12}\|w^2u_{\bd}\|_{L^1_{[t_a,t_b]}L^2_x} \right)
\intertext{We have used $\|Pf\|_{L^{r_1}} \le c|q|^{\frac 1 {r_2} -\frac 1 {r_1}}\|f\|_{L^{r_2}}$ to pass to the $L^2_x$ norm, incurring a penalty no worse than $|q|^{1/2}$. This allows us to put an $L^2_x$ norm on $u_{\bd}$, which is our preferred norm for this part of the error.}
&\le c |q|^{\frac 12} \|u_{\bd}\|_{L^\infty_{[t_a,t_b]}L^2_x} \|w\|^2_X.
\intertext{Here we use $|q|^{\frac 12} \|u_{\bd}\|_{L^\infty_{[t_a,t_b]}L^2_x} \le 1$:}
&\le c \|w\|^2_X.
\end{align*}

Fifth we treat terms of the form $w^2u_{\trans}$ and $w^2u_{\refl}$. These terms obey the same estimate, and we write out the computation in one case only.
\begin{align*}
\indentalign \left\| \int_{t_a}^t e^{-i(t-s)H_q} w(s)^2 u_{\trans}(s) ds \right\|_{L^p_{[t_a,t_b]}L^r_x} \\
&\le c \left(\|w^2 u_{\trans}\|_{L^1_{[t_a,t_b]}L^2_x} + \|Pw^2u_{\trans}\|_{L^1_{[t_a,t_b]}L^r_x} \right).
\intertext{The first term is the same as in the $u_{\bd}$ case. For the second term we pass to the $L^\infty_x$ norm, so as not to be penalized in $|q|$.}
&\le c \left(\|w\|^2_X \|u_{\trans}\|_{L^\infty_{[t_a,t_b]}L^2_x} + \|w^2u_{\trans}\|_{L^1_{[t_a,t_b]}L^\infty_x} \right)
\intertext{Here we use the fact that our explicit formula for $u_{\trans}$ allows us to control all of its mixed norms.}
&\le c \|w\|^2_X.
\end{align*}

Sixth we treat terms of the form $wu_{\trans}^2$, $wu_{\refl}^2$, and $wu_{\trans}u_{\refl}$, again writing out the computation in one case only.
\begin{align*}
\indentalign \bigg\| \int_{t_a}^t e^{-i(t-s)H_q} w(s) u_{\trans}^2(s) ds \bigg\|_{L^p_{[t_a,t_b]}L^r_x} \\
&\le c \left(\|w u_{\trans}^2\|_{L^1_{[t_a,t_b]}L^2_x} + \|Pwu_{\trans}^2\|_{L^1_{[t_a,t_b]}L^r_x} \right) \\
&\le c \left(\|w\|_{L^1_{[t_a,t_b]}L^2_x}\|u_{\trans}\|^2_{L^\infty_{[t_a,t_b]}L^\infty_x} + \|w\|_{L^1_{[t_a,t_b]}L^\infty_x} \|u_{\trans}\|^2_{L^\infty_{[t_a,t_b]}L^\infty_x} \right) \\
&\le c (t_a-t_b)^{3/4}\left(\|w\|_{L^\infty_{[t_a,t_b]}L^2_x} + \|w\|_{L^4_{[t_a,t_b]}L^\infty_x}\right) \|u_{\trans}\|^2_{L^\infty_{[t_a,t_b]}L^\infty_x}\\
&\le c (t_b - t_a)^{3/4}\|w\|_X.
\end{align*}

Seventh we treat terms of the form $wu_{\trans}u_{\bd}$ and $wu_{\refl}u_{\bd}$.
\begin{align*}
\indentalign \bigg\| \int_{t_a}^t e^{-i(t-s)H_q} w(s) u_{\trans}(s)u_{\bd}(s) ds \bigg\|_{L^p_{[t_a,t_b]}L^r_x} \\
&\le c \left(\|w u_{\trans}u_{\bd}\|_{L^1_{[t_a,t_b]}L^2_x} + \|Pwu_{\trans}u_{\bd}\|_{L^1_{[t_a,t_b]}L^r_x} \right) \\
&\le c (1 + |q|^{\frac 1 2})\|w\|_{L^1_{[t_a,t_b]}L^\infty_x} \|u_{\trans}\|_{L^\infty_{[t_a,t_b]}L^\infty_x} \|u_{\bd}\|_{L^\infty_{[t_a,t_b]}L^2_x}\\
&\le c (t_b-t_a)^\frac 34 |q|^{\frac 12} \|u_{\bd}\|_{L^\infty_{[t_a,t_b]}L^2_x} \|w\|_X.\\
&\le c (t_b-t_a)^\frac 34 \|w\|_X.
\end{align*}
where in the last step we again use $|q|^{\frac 12} \|u_{\bd}\|_{L^\infty_{[t_a,t_b]}L^2_x} \le 1$.

Eighth we treat terms of the form $wu_{\bd}^2$.
\begin{align*}
\indentalign \left\| \int_{t_a}^t e^{-i(t-s)H_q} w(s) u_{\bd}^2(s) ds \right\|_{L^p_{[t_a,t_b]}L^r_x} \\
&\le c \left(\|w u_{\bd}^2\|_{L^{4/3}_{[t_a,t_b]}L^1_x} + \|Pwu_{\bd}^2\|_{L^1_{[t_a,t_b]}L^r_x} \right) \\
&\le c |q| \|wu_{\bd}^2\|_{L^{4/3}_{[t_a,t_b]}L^1_x} \\
&\le c |q| \|w\|_{L^{4/3}_{[t_a,t_b]}L^\infty_x}\|u_{\bd}^2\|_{L^\infty_{[t_a,t_b]}L^1_x} \\
&\le c (t_b-t_a)^\frac 12 \|w\|_X
\end{align*}

Ninth we treat terms of the form $u_{\trans}^2u_{\refl}$ and $u_{\trans}u_{\refl}^2$, once again writing out the computation in one case only.
\begin{align*}
\indentalign \left\| \int_{t_a}^t e^{-i(t-s)H_q} u_{\trans}(s)^2u_{\refl}(s) ds \right\|_{L^p_{[t_a,t_b]}L^r_x} \\
&\le c \left(\|u_{\trans}^2u_{\refl}\|_{L^{4/3}_{[t_a,t_b]}L^1_x} + \|Pu_{\trans}^2u_{\refl}\|_{L^1_{[t_a,t_b]}L^r_x} \right) \\
&\le c |q| \|u_{\trans}^2u_{\refl}\|_{L^\infty_{[t_a,t_b]}L^1_x} \\
&\le c |q| \|u_{\trans}u_{\refl}\|_{L^\infty_{[t_a,t_b]}L^2_x} \|u_{\trans}\|_{L^\infty_{[t_a,t_b]}L^2_x}.
\intertext{At this point we use the explicit formula for $u_{\trans}$ to bound $\|u_{\trans}\|_{L^\infty_{[t_a,t_b]}L^2_x}$, and we split up $\|u_{\trans}u_{\refl}\|_{L^\infty_{[t_a,t_b]}L^2_x}$ into regions along the positive and negative real axis.}
&\le c |q| \bigg(\|u_{\trans}\|_{L^\infty_{[t_a,t_b]}L^\infty_x}\|u_{\refl}\|_{L^\infty_{[t_a,t_b]}L^2_{x>0}} \\
&\hspace{2cm}+ \|u_{\trans}\|_{L^\infty_{[t_a,t_b]}L^2_{x<0}}\|u_{\refl}\|_{L^\infty_{[t_a,t_b]}L^\infty_x}\bigg) \\
&\le c(k) |q| \frac {(\log v)^{\sigma(k)}}{v^{k\eps}}.
\end{align*}

Tenth we treat terms of the form $u_{\bd}u_{\trans}^2$, $u_{\bd}u_{\trans}u_{\refl}$ and $u_{\bd}u_{\refl}^2$.
\begin{align*}
\indentalign \left\| \int_{t_a}^t e^{-i(t-s)H_q} u_{\bd}(s)u_{\trans}(s)^2 ds \right\|_{L^p_{[t_a,t_b]}L^r_x} \\
&\le c \left(\|u_{\bd}u_{\trans}^2\|_{L^1_{[t_a,t_b]}L^2_x} + \|Pu_{\bd}u_{\trans}^2\|_{L^1_{[t_a,t_b]}L^r_x} \right) \\
&\le c |q|^{\frac 1 2} \|u_{\bd}u_{\trans}^2\|_{L^\infty_{[t_a,t_b]}L^2_x} \\
&\le c |q|^\frac 12\|u_{\bd}u_{\trans}\|_{L^\infty_{[t_a,t_b]}L^2_x} \\
&\le c \left(\|u_\bd(t_a)\|_{L^2_x} + |q| \la q \ra^\frac 12 \|u_{\bd}(t_a)\|^3_{L^2_x} \right).
\end{align*}

Eleventh, and last, we treat terms of the form $u_{\bd}^2 u_{\trans}$ and $u_{\bd}^2 u_{\refl}$.
\begin{align*}
\indentalign \left\| \int_{t_a}^t e^{-i(t-s)H_q} u_{\bd}(s)^2u_{\trans}(s) ds \right\|_{L^p_{[t_a,t_b]}L^r_x} \\
&\le c \left(\|u_{\bd}^2u_{\trans}\|_{L^{4/3}_{[t_a,t_b]}L^1_x} + \|Pu_{\bd}^2u_{\trans}\|_{L^1_{[t_a,t_b]}L^r_x} \right) \\
&\le c |q| \|u_{\bd}^2u_{\trans}\|_{L^\infty_{[t_a,t_b]}L^1_x} \\
&\le c |q|\|u_\bd\|_{L^\infty_{[t_a,t_b]}L^2_x} \|u_{\bd}u_{\trans}\|_{L^\infty_{[t_a,t_b]}L^2_x}.
\end{align*}
But using once again $|q|^\frac 12\|u_\bd\|_{L^\infty_{[t_a,t_b]}L^2_x} \le 1$ we see that this obeys the same bound as the previous term.

Combining these eleven estimates, we find that
\begin{align*}
\|w\|_X \le c \Big(&\|w(t_a)\|_{L^2_x} + c(k)\la q\ra^2\frac{(\log v)^{\sigma(k)}}{v^{k\eps}} + \|u_\bd(t_a)\|_{L^2_x} + |q| \la q \ra^\frac 12 \|u_{\bd}(t_a)\|^3_{L^2_x}\\
&+ (t_b-t_a)^\frac 12\|w\|_X + \|w\|_X^2 + \|w\|_X^3 \Big).
\end{align*}
Choosing $c_1$ sufficiently small allows us to absorb the linear term into the left hand side. A continuity argument just like that in Phase 1 now gives us the conclusion.
\end{proof}
 
\section{Phase 2 for $q \ge 0$}
\label{S:phase2pos}

In this section, we prove Theorem \ref{th:3}.  We repeat the three lemmas of Phase 2 in the case $q \ge 0$. These results are used in the course of Phase 2 in the case $q<0$ in order to approximate the flow of $e^{-itH_0}$ by that of $\nlso$, and they also give an improvement of the asymptotic result in \cite{HMZ} for $q >0$.

\begin{lem}
\label{linappp}
Let $\phi\in L^2$ and $0<t_b$.  If $q \ge 0$ and $t_b^{1/2} \le c_1 \|\phi\|_{L^2}^{-2}$, then
\begin{equation}
\|\nlsq(t)\phi - e^{-itH_q}\phi\|_{L^P_{[0,t_b]}L^R_x} \le c_2 t_b^{1/2}\|\phi\|_{L^2}^3,
\end{equation}
where $P$ and $R$ satisfy $\frac 2 P + \frac 1 R = \frac 1 2$, and $c_1$ and $c_2$ depend only on the constant appearing in the Strichartz estimates.
\end{lem}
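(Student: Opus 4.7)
The plan is to mirror the proof of Lemma~\ref{linapp}, taking advantage of the fact that when $q \geq 0$ there is no bound state for $H_q$. The spectral projection $P$ is identically zero in this regime, so every term in the earlier argument involving $P\phi$, $Pf$, or a H\"older loss of the form $|q|^{1/2-1/r}$ simply vanishes. In particular, Proposition~\ref{p:Str} specializes, for $q \geq 0$, to the clean inhomogeneous Strichartz estimate
\[
\|u\|_{L^p_{[0,T]}L^r_x} \leq c\bigl(\|\phi\|_{L^2} + \|f\|_{L^{\tilde p}_{[0,T]}L^{\tilde r}_x}\bigr),
\]
with constants independent of $q$.

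First I would set $h(t) = \nlsq(t)\phi$ and run a bootstrap for the norm $\|h\|_{X'} \defeq \|h\|_{L^\infty_{[0,t_b]}L^2_x} + \|h\|_{L^6_{[0,t_b]}L^6_x}$, exactly as in the first part of the proof of Lemma~\ref{linapp}. Applying the Strichartz estimate with $(p,r) \in \{(\infty,2),(6,6)\}$ and $(\tilde p,\tilde r) = (6/5, 6/5)$ to the Duhamel equation for $h$, and using the interpolation
\[
\|h\|_{L^{18/5}_{[0,t_b]}L^{18/5}_x}^3 \leq \|h\|_{L^6_{[0,t_b]}L^6_x}^2 \|h\|_{L^2_{[0,t_b]}L^2_x} \leq c\, t_b^{1/2}\|h\|_{X'}^3,
\]
already recorded in the proof of Lemma~\ref{linapp}, yields the scalar inequality
\[
\|h\|_{X'} \leq c\|\phi\|_{L^2} + c\, t_b^{1/2}\|h\|_{X'}^3.
\]
A continuity argument in $t_b$, valid under the smallness hypothesis $t_b^{1/2}\|\phi\|_{L^2}^2 \leq c_1$, then gives $\|h\|_{X'} \leq 2c\|\phi\|_{L^2}$.

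Second, I would set $u(t) \defeq \nlsq(t)\phi - e^{-itH_q}\phi$, which solves \eqref{eq:eq} with forcing $f = -|h|^2 h$ and zero initial data. Applying the Strichartz estimate again, with $(\tilde p,\tilde r) = (6/5, 6/5)$ on the right and the arbitrary admissible pair $(P,R)$ on the left, and controlling the nonlinearity by the same interpolation as above, gives
\[
\|\nlsq(t)\phi - e^{-itH_q}\phi\|_{L^P_{[0,t_b]}L^R_x} \leq c\,\|h\|_{L^{18/5}_{[0,t_b]}L^{18/5}_x}^3 \leq c\, t_b^{1/2}\|h\|_{X'}^3 \leq c\, t_b^{1/2}\|\phi\|_{L^2}^3,
\]
which is the desired conclusion.

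I do not anticipate any genuine obstacle: the proof is a strict simplification of Lemma~\ref{linapp} because there is no bound state to produce extra norms or penalties in $|q|$. The only mild point to flag is the clash of notation (the symbol $P$ is used simultaneously for a Strichartz exponent and for the bound state projection, as was noted in the statement of Lemma~\ref{linapp}); here the latter is identically zero, so no ambiguity arises in practice.
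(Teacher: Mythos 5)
Your proposal is correct and follows essentially the same argument as the paper: the same auxiliary norm $X'$, the same bootstrap via the Strichartz estimate with $(\tilde p,\tilde r)=(6/5,6/5)$ and the interpolation into $L^{18/5}L^{18/5}$, and the same final Duhamel application to $u(t) = \nlsq(t)\phi - e^{-itH_q}\phi$. The only difference is cosmetic — you frame it as "Lemma~\ref{linapp} with $P\equiv 0$," which is exactly the simplification the paper exploits.
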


\begin{proof}
Let $h(t) = \nlsq(t)\phi$, so that
\[
i\D_t h + \frac 1 2 \D_x^2 h - q \delta_0(x)h + |h|^2h = 0.
\]
We use again the notation $\|h\|_{X'} = \|h\|_{L^\infty_{[0,t_b]}L^2_x} + \|h\|_{L^6_{[0,t_b]}L^6_x}$. We apply the Strichartz estimate, which in this case reads
\[
\|h\|_{L^p_{[0,t_b]}L^r_x} \le c(\| \phi \|_{L^2} + \||h|^2h\|_{L^{\tilde p}_{[0,t_b]}L^{\tilde r}_x })
\]
once with $(p,r) = (\infty, 2)$ and $(\tilde p, \tilde r) = (6/5,6/5)$, and once with $(p,r) = (6,6)$ and $(\tilde p, \tilde r) = (6/5,6/5)$.
As before, the $h$ terms are each estimated using H\"older's inequality as
\begin{align*}
\|h\|^3_{L^{18/5}_{[0,t_b]}L^{18/5}_x} &\le c t_b^{1/2} \|h\|^3_{X'},
\end{align*}
yielding
\[
\|h\|_{X'} \le c(\|\phi\|_{L^2} + t_b^{1/2} \|h\|^3_{X'}).
\]
Using the continuity of $\|h\|_{X'(t_b)}$, we conclude that
\[
\|h\|_{X'} \le 2c\|\phi\|_{L^2},
\]
so long as $8c^2t_b^{1/2}\|\phi\|_{L^2}^2 \le 1$.

We now apply the Strichartz estimate to $u(t) = h(t) - e^{-itH_q}\phi$, observing that the initial condition is zero and the effective forcing term $-|h|^2h$, to get
\[ \|h(t) - e^{-itH_q}\phi\|_{L^P_{[0,t_b]}L^R_x} \le c \||h|^2h\|_{L^{6/5}_{[0,t_b]}L^{6/5}_x} \le ct_b^{1/2}\|h\|_{X'}^3 \le ct_b^{1/2}\|\phi\|_{L^2}^3.\]
\end{proof}

\begin{lem}
\label{L:approx2p}
Under the same hypotheses as the previous lemma,
\begin{equation}
\label{E: approx2p}
\|\nlsq(t)\phi - g\|_{L^\infty_{[0,t_b]}L^2_x} \le  c \left(t_b^2\|\phi\|_{L^2}^9 + t_b^{3/2}\|\phi\|_{L^2}^7 + t_b \|\phi\|_{L^2}^5\right),
\end{equation}
where
\[
g(t) = e^{-itH_q}\phi + \int_0^t e^{-i(t-s)H_q}|e^{-isH_q}\phi|^2e^{-isH_q}\phi ds.
\]
\end{lem}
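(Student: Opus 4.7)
The plan is to model the proof closely on that of \lemref{L:approx2}, but with all the bound-state-projection ($P$) terms excised, since for $q \geq 0$ the operator $H_q$ has no discrete spectrum and the Strichartz estimate of \propref{p:Str} reduces to a clean form without $P\phi$ or $Pf$ contributions.

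First I would write $h(t) = \nlsq(t)\phi$ and use the Duhamel expansion of $h - g$ against the forcing $-|h|^2h$:
\[
h(t) - g(t) = -i\int_0^t e^{-i(t-s)H_q}\left(|h(s)|^2 h(s) - |e^{-isH_q}\phi|^2 e^{-isH_q}\phi\right) ds.
\]
Applying the Strichartz estimate (with $q \geq 0$) with endpoints $(p,r) = (\infty,2)$ on the left and dual exponents $(\tilde p,\tilde r) = (6/5,6/5)$ on the right gives
\[
\|h-g\|_{L^\infty_{[0,t_b]}L^2_x} \leq c\,\big\||h|^2 h - |e^{-isH_q}\phi|^2 e^{-isH_q}\phi\big\|_{L^{6/5}_{[0,t_b]}L^{6/5}_x}.
\]

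Next, setting $w(t) = h(t) - e^{-itH_q}\phi$, I would expand the difference of cubes as
\[
|h|^2 h - |e^{-isH_q}\phi|^2 e^{-isH_q}\phi = w|w|^2 + 2(e^{-isH_q}\phi)|w|^2 + \overline{e^{-isH_q}\phi}\,w^2 + 2|e^{-isH_q}\phi|^2 w + (e^{-isH_q}\phi)^2\bar w,
\]
and estimate each group by H\"older's inequality in the $L^{6/5}_tL^{6/5}_x$ norm, allocating the factors of $w$ across $L^\infty_tL^2_x$ and $L^6_tL^6_x$ and the factors of $e^{-isH_q}\phi$ across $L^\infty_tL^2_x$ and $L^6_tL^6_x$. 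Concretely: the cubic-in-$w$ term yields $c\,t_b^{1/2}\|w\|_{L^\infty L^2}\|w\|_{L^6 L^6}^2$; the quadratic-in-$w$ terms yield $c\,t_b^{1/2}\|w\|_{L^\infty L^2}\|w\|_{L^6 L^6}\|e^{-isH_q}\phi\|_{L^6 L^6}$; and the linear-in-$w$ terms yield $c\,t_b^{1/2}\|w\|_{L^\infty L^2}\|e^{-isH_q}\phi\|_{L^6 L^6}^2$.

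Now I invoke the previous lemma (\lemref{linappp}) to bound $\|w\|_{L^\infty L^2}$ and $\|w\|_{L^6 L^6}$ each by $c\,t_b^{1/2}\|\phi\|_{L^2}^3$, together with the standard Strichartz bound $\|e^{-isH_q}\phi\|_{L^6_{[0,t_b]}L^6_x} \leq c\|\phi\|_{L^2}$. Substituting yields $c\,t_b^2\|\phi\|_{L^2}^9$ from the cubic term, $c\,t_b^{3/2}\|\phi\|_{L^2}^7$ from the quadratic terms, and $c\,t_b\|\phi\|_{L^2}^5$ from the linear terms; summing gives exactly \eqref{E: approx2p}. There is no real obstacle here: the $q\geq 0$ case is strictly cleaner than \lemref{L:approx2} because no projection terms arise on either the linear or nonlinear side, so no $|q|$-dependent factors are introduced and no H\"older estimate \eqref{E:PHolder} needs to be invoked.
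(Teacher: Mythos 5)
Your proposal is correct and follows essentially the same argument as the paper: a Duhamel expansion of $h-g$ against $-|h|^2h$, a Strichartz bound with dual exponents $(6/5,6/5)$, the cube-difference expansion in $w = h - e^{-itH_q}\phi$, and term-by-term H\"older estimates using Lemma \ref{linappp} together with the linear Strichartz bound for $e^{-isH_q}\phi$. The only difference is purely expository (you explicitly note that the $q\geq 0$ version of the Strichartz estimate carries no $P$ terms, which the paper leaves implicit by simply writing the estimate without them).
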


\begin{proof}
A direct calculation shows
\[
h(t) - g(t)  = \int_0^t e^{-i(t-s)H_q}\left(|h(s)|^2h(s) - |e^{-isH_q}\phi|^2e^{-isH_q}\phi\right)ds.
\]
The Strichartz estimate gives us in this case
\[ \|h - g\|_{L^\infty_{[0,t_b]}L^2_x} \le \||h|^2h - |e^{-isH_q}\phi|^2e^{-isH_q}\phi\|_{L^{6/5}_{[0,t_b]}L^{6/5}_x}. \]

We introduce the notation $w(t) = h(t) - e^{-itH_q}\phi$, and use this to rewrite our difference of cubes:
\[
|h|^2h - |e^{-isH_q}\phi|^2e^{-isH_q}\phi = 
\begin{aligned}[t]
&w|w|^2 + 2 e^{-isH_q}\phi |w|^2 + e^{isH_q}\overline{\phi} w^2 \\
&+ 2|e^{-isH_q}\phi|^2w + \left(e^{-isH_q}\phi\right)^2\ow.
\end{aligned}
\]
We proceed term by term, using H\"older estimates similar to the ones in the previous lemma and in Phase 1. Our goal is to obtain Strichartz norms of $w$, so that we can apply Lemma \ref{linappp}.

We have, for the cubic term,
\begin{align*}
\|w^3\|_{L^{6/5}_{[0,t_b]}L^{6/5}_x} &\le c t_b^{1/2}\|w\|_{L^\infty_{[t_a,t_b]}L^2_x}\|w\|^2_{L^6_{[t_a,t_b]}L^6_x} \\
&\le c t_b^2\|\phi\|_{L^2}^9.
\end{align*}
For the first inequality we used H\"older, and for the second Lemma \ref{linapp}. Next we treat the quadratic terms using the same strategy (observe that as before we ignore complex conjugates):
\begin{align*}
\|e^{-isH_q}\phi |w|^2\|_{L^{6/5}_{[0,t_b]}L^{6/5}_x} &\le c t_b^{1/2}\|w\|_{L^\infty_{[t_a,t_b]}L^2_x}\|w\|_{L^6_{[t_a,t_b]}L^6_x}\|e^{-isH_q}\phi\|_{L^6_{[t_a,t_b]}L^6_x} \\
&\le c t_b^{3/2}\|\phi\|_{L^2}^7.
\end{align*}
And finally the linear terms:
$$\||e^{-isH_q}\phi|^2 w\|_{L^{6/5}_{[0,t_b]}L^{6/5}_x} \le c t_b^{1/2}\|w\|_{L^\infty_{[t_a,t_b]}L^2_x}\|e^{-isH_q}\phi\|^2_{L^6_{[t_a,t_b]}L^6_x} \le c t_b \|\phi\|_{L^2}^5.$$

Putting all this together, we see that
\[ \|h - g\|_{L^\infty_{[0,t_b]}L^2_x} \le c \left(t_b^2\|\phi\|_{L^2}^9 + t_b^{3/2}\|\phi\|_{L^2}^7 + t_b \|\phi\|_{L^2}^5\right).\]
\end{proof}

Finally we estimate, at time $t_2$, the error incurred by dropping the integral term:

\begin{lem}
\label{L:dropintp}
For $t_1 < t_2$ and $\phi = u(x,t_1)$, we have
\begin{align}\label{E:dropintp}
\bigg\| \int_{t_1}^{t_2} e^{-i(t_2-s)H_q}&|e^{-isH_q}\phi|^2e^{-isH_q}\phi ds \bigg\|_{L^2_x} \le \\
\nonumber&c \left[(t_2-t_1) + (t_2-t_1)^{\frac 12}\left(q e^{-v^{1-\delta}} + q^2e^{-2v^{1-\delta}} + q^3 e^{-3v^{1-\delta}}\right)\right],
\end{align}
where $c$ is independent of the parameters of the problem.
\end{lem}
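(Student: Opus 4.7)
The plan is to follow the proof of Lemma \ref{L:dropint} with the simplifications available when $q \ge 0$ and no bound state exists. First I would decompose $\phi = \phi_1 + \phi_2$, where $\phi_1(x) = e^{-it_1 v^2/2} e^{it_1/2} e^{ixv} \sech(x - x_0 - vt_1)$ is the pre-interaction soliton profile and $\phi_2 = \phi - \phi_1$ is the Phase 1 error. The $q \ge 0$ analogue of Lemma \ref{L:approx1} (considerably simpler, since all of the terms involving $P$ in the Strichartz estimates and in the integral equation vanish) yields $\|\phi_2\|_{L^2_x} \le c q e^{-v^{1-\delta}}$. Expanding the cubic $|e^{-isH_q}\phi|^2 e^{-isH_q}\phi$ produces eight summands, classified by the number $j \in \{0,1,2,3\}$ of factors of $\phi_2$ they contain, and I would handle the $j=0$ term separately from the rest.

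For the cubic-in-$\phi_1$ term I would reproduce step I of the proof of Lemma \ref{L:dropint}. Passing the norm inside via
$$\Bigl\| \int_{t_1}^{t_2} e^{-i(t_2-s)H_q} F(s)\, ds \Bigr\|_{L^2_x} \le (t_2 - t_1) \|F\|_{L^\infty_{[t_1,t_2]} L^2_x}$$
produces the desired prefactor $(t_2-t_1)$. The remaining $L^2_x$ bound on $|e^{-isH_q}\phi_1|^2 e^{-isH_q}\phi_1$ reduces, by the representation formula \eqref{eq:prop} (which for $q \ge 0$ carries no $e^{\frac 12 itq^2}P$ piece, so the argument has exactly three decomposed summands per factor instead of four) and the Galilean identity $e^{-itH_0}[e^{ixv}\psi(x)] = e^{-itv^2/2}e^{ixv}e^{-itH_0}[\psi(x-vt)]$, to controlling products of one $L^2_x$ factor and two $L^\infty_x$ factors of $(e^{-itH_0}\sech) * \sigma_q$, where $\sigma_q \in \{e^{-ixv}\rho_q,\, e^{-ixv}\tau_q,\, \delta_0\}$, each of $L^1$-norm $1$. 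Young's inequality combined with unitarity of $e^{-itH_0}$ on $L^2$ and $H^1$ and the one-dimensional Sobolev embedding $\|f\|_{L^\infty} \le c\|f\|_{H^1}$ bounds each such product by an absolute constant, producing the $c(t_2-t_1)$ piece.

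For each of the remaining terms, containing $j \in \{1,2,3\}$ copies of $\phi_2$, I would apply the Strichartz estimate of Proposition \ref{p:Str} in its $q \ge 0$ form (with no $P$-term penalty on the right-hand side) using the pair $(\tilde p, \tilde r) = (1, 2)$, then use H\"older in space and time to arrange the result as $(t_2-t_1)^{1/2} \|e^{-isH_q} \phi_1\|^{3-j}_{L^4_{[t_1,t_2]} L^\infty_x} \|e^{-isH_q} \phi_2\|^{j}_{L^\infty_{[t_1,t_2]} L^2_x}$ (or the symmetric variant distributing the $\phi_2$ factors differently). The $\phi_1$ mixed-norms are bounded independently of $v$ and $q$ via Strichartz applied to $e^{-isH_q}\phi_1$, while $\|e^{-isH_q}\phi_2\|_{L^\infty L^2_x} \le \|\phi_2\|_{L^2_x} \le c q e^{-v^{1-\delta}}$, so the three summands contribute exactly $(t_2-t_1)^{1/2} q^j e^{-j v^{1-\delta}}$ for $j=1,2,3$.

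The main conceptual obstacle is, as in the $q<0$ case, the cubic-in-$\phi_1$ term, which is the only one for which the short-time Strichartz gain $(t_2-t_1)^{1/2}$ is not available (the $\phi_1$ contribution being of unit $L^2_x$ size). Everything else is a routine Strichartz-plus-H\"older computation that is strictly simpler than its $q<0$ counterpart because Proposition \ref{p:Str} carries no $P$-terms on the right-hand side and the Phase 1 bound on $\phi_2$ is correspondingly cleaner, with no $\la q \ra$ factors. Summing the two contributions yields the claimed estimate \eqref{E:dropintp}.
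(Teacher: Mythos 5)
Your proposal is correct and follows essentially the same route as the paper: decompose $\phi = \phi_1 + \phi_2$, handle the cubic-in-$\phi_1$ term by pulling out $(t_2-t_1)$ and controlling the remaining $L^\infty_{[t_1,t_2]}L^2_x$ norm via the propagator representation, Young's inequality, and Sobolev embedding, and handle the terms containing $\phi_2$ via Strichartz with $(\tilde p,\tilde r)=(1,2)$ followed by H\"older, gaining $(t_2-t_1)^{1/2}$ and using the exponential smallness of $\phi_2$. The paper's proof indeed just reproduces part I of Lemma~\ref{L:dropint} with the $P$-terms dropped and cites the $q>0$ Phase 1 lemma (Lemma 3.1 of \cite{HMZ}) for $\|\phi_2\|_{L^2}$, exactly as you describe.
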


\begin{proof}
We write $\phi(x) = \phi_1(x) + \phi_2(x)$, where $\phi_1(x) = e^{-it_1v^2/2}e^{it_1/2}e^{ixv}\sech(x-x_0-vt_1)$, and estimate individually the eight resulting terms. We know that for large $v$, $\phi_2$ is exponentially small in $L^2$ norm from Lemma 3.1 of \cite{HMZ}. This makes the term which is cubic in $\phi_1$ the largest, and we treat this one first.

I. We claim $\left\| \int_{t_1}^{t_2} e^{-i(t_2-s)H_q}|e^{-isH_q}\phi_1|^2e^{-isH_q}\phi_1 ds \right\|_{L^2_x} \le c (t_2-t_1)$.

We begin with a direct computation
\begin{align*}
\indentalign \left\| \int_{t_1}^{t_2} e^{-i(t_2-s)H_q}|e^{-isH_q}\phi_1|^2e^{-isH_q}\phi_1 ds \right\|_{L^2_x} \\
&\le (t_2 - t_1) \left\| e^{-i(t_2-s)H_q}|e^{-isH_q}\phi_1|^2e^{-isH_q}\phi_1 \right\|_{L^\infty_{[t_1,t_2]}L^2_x} \\
&\le c (t_2-t_1) \left\| |e^{-isH_q}\phi_1|^2e^{-isH_q}\phi_1 \right\|_{L^\infty_{[t_1,t_2]}L^2_x}.
\end{align*}
It remains to show that this last norm is bounded by a constant. This follows exactly the same argument as that given in part I of Lemma \ref{L:dropint}, with the difference that terms involving $P$ are omitted.

II. For the other terms we use simpler Strichartz estimates. The smallness will come more from the smallness of $\phi_2$ than from the brevity of the time interval.
\begin{align*}
\indentalign \bigg\| \int_{t_1}^{t_2} e^{-i(t_2-s)H_q}|e^{-isH_q}\phi_1|^2e^{-isH_q}\phi_2 ds \bigg\|_{L^2_x} \\
&\le c \||e^{-isH_q}\phi_1|^2e^{-isH_q}\phi_2\|_{L^1_{[t_1,t_2]}L^2_x}.
\intertext{We have used the Strichartz estimate with $(\tilde p, \tilde r) = (1,2)$, and use H\"older's inequality so as to put ourselves in a position to reapply the Strichartz estimate.}
&\le c (t_2-t_1)^{\frac 12}\|e^{-isH_q}\phi_1\|^2_{L^4_{[t_1,t_2]}L^\infty_x}\|e^{-isH_q}\phi_2\|_{L^\infty_{[t_1,t_2]}L^2_x} \\
&\le c (t_2-t_1)^{\frac 12} \|\phi_1\|_{L^2_x}^2\|\phi_2\|_{L^2_x} \\
&\le c (t_2-t_1)^{\frac 12}q e^{-v^{1-\delta}}.
\end{align*}
Note that here $\delta = 1 -\eps$. Similarly we find that
\begin{align*}
\bigg\| \int_{t_1}^{t_2} e^{-i(t_2-s)H_q}|e^{-isH_q}\phi_2|^2e^{-isH_q}\phi_1 ds \bigg\|_{L^2_x} &\le c (t_2-t_1)^{\frac 12}q^2 e^{-2v^{1-\delta}} \\
\bigg\| \int_{t_1}^{t_2} e^{-i(t_2-s)H_q}|e^{-isH_q}\phi_2|^2e^{-isH_q}\phi_2 ds \bigg\|_{L^2_x} &\le c (t_2-t_1)^{\frac 12}q^3 e^{-3v^{1-\delta}}.
\end{align*}
\end{proof}

These three lemmas improve the error in the case $q>0$ from $v^{-\frac {1-\eps} 2}$ to $v^{-(1-\eps)}$, or, in the language of \cite{HMZ}, from $v^{-\frac \delta 2}$ to $v^{-\delta}$.

\end{document}